\newtheorem{thm}{Theorem}[subsection]
\newtheorem*{thm*}{Theorem}
\newtheorem{lem}[thm]{Lemma}
\newtheorem{fact}[thm]{Fact}
\newtheorem{facts}[thm]{Facts}
\newtheorem{prop}[thm]{Proposition}
\newtheorem*{prop*}{Proposition}
\newtheorem{cor}[thm]{Corollary}
\theoremstyle{definition}
\newtheorem{defn}[thm]{Definition}
\newtheorem{remark}[thm]{Remark}
\newtheorem{remarks}[thm]{Remarks}
\newtheorem{example}[thm]{Example}
\def\bb{\mathbb}
\def\bb{\mathbb}
\def\cal{\mathcal}
\newcommand\cU{{\cal U}}
\newcommand\bbN{{\mathbf N}}
\def \aA{{}_\alpha A}
\def \tT{\tilde{T}}
\def \st{\operatorname{st}}
\begin{document}


\title{A nonstandard take on central sets}
\author{Isaac Goldbring}
\thanks{Goldbring's work was partially supported by NSF CAREER grant DMS-1349399.}

\address {Department of Mathematics, University of California, Irvine, 340 Rowland Hall (Bldg.\# 400), Irvine, CA, 92697-3875.}
\email{isaac@math.uci.edu}
\urladdr{http://www.math.uci.edu/~isaac}

\begin{abstract}
We present the basic theory of central subsets of semigroups from the nonstandard perspective.  A key feature of this perspective is the replacement of the algebra of ultrafilters with the algebra of elements of iterated hyperextensions, a technique first employed by Mauro Di Nasso.
\end{abstract}

\maketitle

\tableofcontents

\section{Introduction}

One of the main themes of the subject known as Ramsey theory on the natural numbers is the study of \textbf{partition regular} families of subsets of $\bb N$, where $\cal F\subseteq \cal P(\bb N)$ is partition regular if:  whenever $A\in \cal F$ and $A=\bigsqcup_{i=1}^n A_i$ is a partition of $A$ into finitely many pieces, then there is $i\in \{1,\ldots,n\}$ such that $A_i\in \cal F$.\footnote{Sometimes this is phrased in terms of \textbf{colorings}:  if a member of $\cal F$ is colored with finitely many colors, then there is a monochromatic subset belonging to $\cal F$.  For this to really be an equivalence, $\cal F$ needs to be closed under supersets, which it often is.}  If, in the preceding definition, we only look at finite partitions of $\bb N$ itself, then we say that the family is merely \textbf{weakly partition regular.}

Although there are many important partition regular families, two such families will play an important role in this paper:

\begin{defn}

\

\begin{enumerate}
\item $A\subseteq \bb N$ is called \textbf{piecewise syndetic} if there is a finite $G\subseteq \bb N$ such that $A+G$ contains arbitrarily long intervals.
\item $A\subseteq \bb N$ is an \textbf{FS-set} if there is an infinite set $X$ such that FS$(X)\subseteq A$, where FS$(X):=\{\sum_{x\in F} x \ : F\subseteq X\text{ is nonempty and finite}\}$.
\end{enumerate}
\end{defn}

The fact that the family of piecewise syndetic subsets of $\bb N$ is partition regular is known as Brown's lemma, although its proof is quite straightforward (and especially elegant from the nonstandard perspective \cite[Corollary 11.19]{book}).  On the other hand, the partition regularity of the family of FS-sets is a deeper result known as \textit{Hindman's theorem} (although technially the original version of Hindman's theorem only established weak partition regularity) and is a cornerstone result in the area.

It is only natural to seek a partition regular family of subsets of $\bb N$ contained in the intersection of the aforementioned two families.  We should note that the intersection of the aforementioned two families is not itself partition regular:

\begin{example}
Let $A$ be a piecewise syndetic set that is not an FS-set (e.g. the set of odd numbers) and let $B$ be an FS-set that is not piecewise syndetic (e.g. FS$(X)$ for $X\subseteq\bb N$ sufficiently sparse).  It remains to note that $A\cup B$ is both piecewise syndetic and an FS-set.  
\end{example}

This note is about the class of \textbf{central} subsets of $\bb N$, which is indeed a partition regular family of subsets of $\bb N$ and each central subset of $\bb N$ is both piecewise syndetic and an FS-set.  As we will point out later, central sets contain a lot more structure than merely being both piecewise syndetic and an FS-set.

The key to defining central sets is to give ultrafilter characterizations of piecewise syndetic sets and FS-sets:  $A\subseteq \bb N$ is piecewise syndetic (resp. an FS-set) if and only if it belongs to a \textbf{minimal} (resp. \textbf{idempotent}) ultrafilter; these terms will be defined shortly.  The family of central sets can thus be defined to be those sets that belong to an ultrafilter that is both minimal and idempotent.  The partition regularity of the family of central sets is now immediate from this characterization.

The account given above is actually revisionist history.  Indeed, in \cite{Furst}, Furstenberg introduced the family of central subsets of $\bb N$ in connection with his work in dynamical systems.  His definition is, at first glance, completely different from the one given above and will be discussed in the last section of this note.  Furstenberg showed that this class is weakly partition regular\footnote{He also mentions, without proof, that any finite coloring of a central set contains a monochromatic central subset.  At the time he was unaware of the fact that central sets were closed under supersets, a fact first pointed out by Hindman using the ultrafilter characterization.} and that every central set contains arbitrarily long arithmetic progressions.\footnote{He mentions, without proof, that central sets can be shown to piecewise syndetic, which, by van der Waerden's theorem, would also yield that central sets contain arbitrarily long arithmetic progressions.}  He then proved a theorem that shows that central sets contain a lot of extra structure (which implies, in particular, that they are FS-sets); this theorem is now a special case of a much more general theorem called the \emph{Central Sets Theorem}, which will be discussed in Section 5.  It was only later on that Bergelson and Hindman realized that the conclusion of the Central Sets Theorem should also hold for members of minimal idempotent ultrafilters.

At the meeting ``Combinatorics meets ergodic theory'' at BIRS in 2015, Randall McCutcheon asked me if there is a nonstandard perspective on the theory of central sets.  It is the purpose of this note to give such a perspective.  Given the fact that every ultrafilter can be represented as a ``hyper-principal'' ultrafilter with a nonstandard generator, the existence of such a perspective should not be so surprising.  Moreover, using the replacement of ``algebra in the space of ultrafilters on $\bb N$'' with ``algebra in the space of \emph{iterated nonstandard extensions} of $\bb N$'' à la Mauro Di Nasso \cite{DN}, many of the arguments given in \cite{HS} laying the foundation for the basic theory of central sets become much shorter and more natural in the nonstandard context.

We now provide an outline of the contents of this article.  In Section 2, we gather the necessary preliminaries from nonstandard analysis, focusing mainly on the nonstandard perspective on ultrafilters and the use of iterated hyperextensions.  We also give the nonstandard proof of Hindman's theorem as it is an easier version of many arguments that appear later in this note.  In Section 3, we give the nonstandard account of minimal ideals and use this to prove some of the basic facts about central sets.  In Section 4, we give the combinatorial (that is, ultrafilter-free) description of central sets.  One of the ingredients of this description, namely the notion of a \emph{collectionwise piecewise syndetic family} of subsets of $\bb N$, becomes especially transparent from the nonstandard perspective.  In Section 5, we state and prove the aforementioned Central Sets Theorem and indicate some of its consequences.  In the final section, we present Furstenberg's original dynamical definition of central sets and establish the equivalence with the ultrafilter formulation.

We reiterate that most, if not all, of the arguments contained in this note are the nonstandard versions of the arguments appearing in the fantastic book \cite{HS}, which contains a lot more information about central sets than we present here.  We do believe, however, that the nonstandard versions of the arguments are aesthetically cleaner and computationally more natural.  Two other references that proved useful during the writing of this note are Hindman's suvey on central sets \cite{Hindman} and Bergelson's survey on ultrafilters in combinatorial number theory \cite{Bergelson}.

We end this introduction with some conventions maintained throughout this note.
\begin{itemize}
\item $(S,\cdot)$ denotes an arbitrary semigroup.
\item We set $\bb N:=\{1,2,3,\ldots\}$ to be the set of natural numbers which, in this context, is assumed \textbf{not} to contain $0$.
\item For a set $X$, we let $\cal P(X)$ denote the power set of $X$ and $\cal P_f(X)$ denote the set of finite subsets of $X$.
\item When we write $A=\bigsqcup_{i=1}^n A_i$, this indicates that the set $A$ has been partitioned into the disjoint subsets $A_1,\ldots,A_n$.
\item For $m\in \bb N$, we let $\bb N^{[m]}$ denote the $m$-element subsets of $\bb N$, which we often identify with increasing sequences $t(1)<t(2)<\cdots<t(m)$.
\end{itemize}

\section{Preliminaries}

For the sake of brevity, we assume that the reader is familiar with the basics of nonstandard analysis.  A recent monograph \cite{book}, written with applications to Ramsey theory and combinatorial number theory in mind, also contains a complete introduction to the subject.  We only mention here some crucial facts needed for the remainder of this article.

As usual, we assume that our nonstandard extension is as saturated as necessary to make the arguments below valid.

\subsection{Nonstandard generators of ultrafilters and iterated hyperextensions}

We let $\beta S$ denote the Stone-\v{C}ech compactification of the discrete space $S$.  It can be identified with the space of ultrafilters on $S$, where a basis of clopen sets for the topology is given by $\bar A:=\{\cU \in \beta S \ : \ A\in \cU\}$ for $A\subseteq S$.  We view $S$ as a subset of $\beta S$ by identifying $s\in S$ with the principal ultrafilter $\cU_s$ generated by $s$.

The semigroup operation on $S$ extends to a semigroup operation on $\beta S$ determined by declaring, for $\cU,\cal V\in \beta S$ and $A\subseteq S$, that
$$A\in \cU \cdot \cal V \Leftrightarrow \{s\in S \ :  s^{-1}A\in \cal V\}\in \cU.$$  Here, $s^{-1}A:=\{t\in S \ : \ st\in A\}$.  Although the extended semigroup operation on $\beta S$ need not be continuous, we do have that the maps 
$$\cU\mapsto \cU_s\cdot \cU,\cU\mapsto \cU\cdot \cal V:\beta S\to \beta S$$ are continuous for each $s\in S$ and $\cal V\in \beta S$.

Given $\alpha\in S^*$ (the nonstandard extension of $S$), set $\cU_\alpha:=\{A\subseteq S \ : \ \alpha\in A^*\}$.  It is easy to see that $\cU_\alpha$ is an ultrafilter on $S$ and that this notation agrees with the notation above when $\alpha$ is a standard element of $S$.  Conversely, given $\cU\in \beta S$, there is (assuming sufficient saturation) some $\alpha\in \bigcap_{A\in \cU}A^*$; for such an $\alpha$, we have $\cU=\cU_\alpha$.

We let $\pi:S^*\to \beta S$ be the canonical surjection given by $\pi(\alpha):=\cU_\alpha$.  While we just obsered that $\pi$ is surjective, it is not (in general) injective, that is, there may be many nonprincipal generators for a given ultrafilter.  We define an equivalence relation $\sim$ on $S^*$ by setting $\alpha\sim \beta$ if $\cU_\alpha=\cU_\beta$; in other words, $\alpha\sim \beta$ if and only if:  for every $A\subseteq \cU$, we have $\alpha\in A^*\Leftrightarrow \beta\in A^*$.  It follows that $\pi$ descends to a bijection $\bar \pi:S^*$/$\sim \ \to \beta S$.

The \textbf{u-topology} on $S^*$ has as a basis of clopen sets the sets $A^*$ for $A\subseteq S$.  Note that the u-topology on $S^*$ is compact but not Hausdorff and, in fact, the map $\bar \pi$ witnesses that $\beta S$ is homeomorphic to the Hausdorff separation of $S^*$.  Although $S^*$ carries other natural topologies, in this note, all references to topological notions in $S^*$ will be with respect to the u-topology. 

Clearly the nonstandard extension of the semigroup operation on $S$ is a semigroup operation on $S^*$.  The na\"ive expectation would be that $\pi$ is a semigroup homomorphism, that is, $\cU_{\alpha\cdot \beta}=\cU_\alpha\cdot\cU_\beta$.  This is unfortunately not the case (see \cite[Example 3.8]{book} for concrete counter-examples).  However, there is still a viable formula along these lines whose validity allows the nonstandard method to be applicable to the algebra of ultrafilters.

Fix $\alpha,\beta\in S^*$ and $A\subseteq S$.  We set $$A\cdot \cU_\beta^{-1}:=\{s\in S \ : \ s^{-1}A\in \cU_\beta\}=\{s\in S \ : \ s\cdot \beta\in A^*\}.$$  By the definition of the semigroup operation on $\beta S$, we have that $$A\in \cU_\alpha\cdot \cU_\beta \Leftrightarrow A\cdot \cU_\beta^{-1}\in \cU_\alpha \Leftrightarrow \alpha\in (A\cdot \cU_\beta^{-1})^*.$$  Working na\"ively (and motivated by some kind of transfer principle), the latter equivalent should in turn be equivalent to $\alpha \cdot \beta^*\in A^{**}$.  Of course, for this to make any sense, one needs to give meaning to the objects $\beta^*$ and $A^{**}$. 

One can indeed give concrete meaning to objects like $\beta^*$ and $A^{**}$.  This idea was first pursued by Mauro Di Nasso in \cite{DN}, where he used this technique to give an ultrafilter generaliztion of Rado's classical theorem on parition regularity of linear equations.  One works in a framework for nonstandard analysis where one can \emph{iterate} the $*$ operation, whence $\beta^*$ above is an element of $S^{**}$ and $A^{**}$ is a subset of $S^{**}$.  There is an obvious transfer principle between one level of the tower of iterated nonstandard extensions and the next level.  For complete details, see \cite{DN} or \cite{book}, the latter of which contains many applications of this technique to Ramsey theory.  Admittedly this approach takes some getting used to (e.g. unlike the usual convention that $s^*=s$ for $s\in S$, we now have that $\alpha^*\not=\alpha$ for $\alpha\in S^*\setminus S$); however, once one is familiarized with this framework\footnote{This should hopefully be the case by the time you have finished reading this note.}, it proves to be extremely convenient.  

We set up some notation concerning iterated nonstandard extensions:  for $k\in \bb N$, we let $S^{(k)*}$ denote the $k^{\text{th}}$ iterate of the *-map applied to $S$ and we let $S^{(\infty)*}:=\bigcup_{n\in \omega} S^{(n)*}$.  (Here, $S^{(0)*}:=S$.)  Many of the ideas from earlier in this section can be adapted to this extended framework.  For example, given $\alpha\in S^{(*)k}$, we set $\cU_\alpha:=\{A\subseteq S \ : \ \alpha\in A^{(k)*}\}$, which is again an ultrafilter on $S$, and for $\alpha,\beta\in S^{(\infty)*}$, we write $\alpha\sim \beta$ if and only if $\cU_\alpha=\cU_\beta$.

Returning to the earlier context:  for $\alpha,\beta\in S^*$ and $A\subseteq S$, we now have
$$A\in \cU_\alpha\cdot \cU_\beta\Leftrightarrow \alpha\cdot \beta^*\in A^{**}\Leftrightarrow A\in \cU_{\alpha\cdot \beta^*}.$$  In other words, $\cU_{\alpha}\cdot \cU_{\beta}=\cU_{\alpha\cdot \beta^*}$.

\subsection{Idempotent elements and FP-sets}

Equipped with the framework of iterated nonstandard extensions, we can now give an extremely clean proof of Hindman's theorem.  We first need the following fundamental fact about $\beta S$, which follows from a straightforward application of a classical theorem of Ellis (see \cite[Thm 2.5]{HS}):

\begin{fact}
For every nonempty compact subsemigroup $M$ of $\beta S$, there is $\cU\in M$ such that $\cU\cdot \cU=\cU$.
\end{fact}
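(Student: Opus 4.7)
The plan is to prove this by the classical Ellis--Numakura argument, which uses Zorn's lemma to isolate a minimal nonempty compact subsemigroup and then extracts an idempotent from it.

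First, I would apply Zorn's lemma to the family $\mathcal{F}$ of nonempty compact subsemigroups of $M$, ordered by reverse inclusion. For a descending chain in $\mathcal{F}$, its intersection is nonempty by compactness of $M$ together with the finite intersection property, and it is obviously closed under the semigroup operation, hence belongs to $\mathcal{F}$. Zorn therefore yields a minimal element $M_0 \in \mathcal{F}$.

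Next, fix any $\cU \in M_0$; the goal is to show $\cU \cdot \cU = \cU$. Consider $M_0 \cdot \cU$. Since right multiplication by $\cU$ is continuous on $\beta S$ (as recalled in the excerpt), $M_0 \cdot \cU$ is the continuous image of a compact set, hence compact. Using associativity in $\beta S$ together with the fact that $M_0$ is a subsemigroup, one checks $M_0 \cdot \cU$ is itself a subsemigroup contained in $M_0$. By minimality of $M_0$, we get $M_0 \cdot \cU = M_0$, so some $\cV \in M_0$ satisfies $\cV \cdot \cU = \cU$.

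Finally, consider $N := \{\cW \in M_0 : \cW \cdot \cU = \cU\}$. This set is nonempty (it contains $\cV$) and is a subsemigroup by associativity: if $\cW_1 \cdot \cU = \cW_2 \cdot \cU = \cU$, then $(\cW_1 \cdot \cW_2) \cdot \cU = \cW_1 \cdot (\cW_2 \cdot \cU) = \cW_1 \cdot \cU = \cU$. Moreover $N$ is closed in $M_0$, being the preimage of the closed singleton $\{\cU\}$ (recall $\beta S$ is Hausdorff) under the continuous right-multiplication-by-$\cU$ map. Thus $N$ is a nonempty compact subsemigroup of $M_0$, so by minimality $N = M_0$, and in particular $\cU \in N$, giving $\cU \cdot \cU = \cU$.

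The main point requiring care is that the excerpt only guarantees right multiplication by a fixed ultrafilter (and left multiplication by a principal ultrafilter) is continuous; left multiplication by a general ultrafilter can fail to be continuous. Thus every continuity-of-image and closedness-of-preimage step above must be arranged with $\cU$ sitting on the right. As long as one is vigilant about this asymmetry, the argument is straightforward.
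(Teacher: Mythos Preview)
Your argument is correct and is precisely the Ellis--Numakura proof that the paper has in mind: the paper does not reproduce a proof at all but simply records the fact and cites Ellis's theorem via \cite[Thm~2.5]{HS}. So you have written out in full the very argument the paper defers to, with appropriate care about the one-sided continuity of the semigroup operation on $\beta S$.
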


An ultrafilter as in the statement of the previous fact is called \textbf{idempotent}.  Clearly a nonstandard generator of an idempotent ultrafilter is an idempotent element of $S^*$ in the sense of the following

\begin{defn}
$\alpha\in S^*$ is \textbf{idempotent}\footnote{In other works, such elements are called u-idempotent. We prefer the current terminology even though it is potentially confusing as $S^*$ is itself a semigroup and thus there is already the usual algebraic notion of idempotent elements of $S^*$.  To avoid confusion, we will never speak of algebraic idempotent elements of $S^*$.  Note that, by transfer, if $S^*$ has an algebraic idempotent element, then so does $S$.} if $\alpha\cdot \alpha^*\sim \alpha$.
\end{defn}

The nonstandard version of a subsemigroup of $\beta S$ is the following:

\begin{defn}
$T\subseteq S^*$ is a \textbf{u-subsemigroup} if, for every $\alpha,\beta\in T$, there is $\gamma\in T$ such that $\gamma\sim \alpha\cdot \beta^*$.
\end{defn}

The following is clear:

\begin{lem}
$T\subseteq S^*$ is a u-subsemigroup if and only if $\pi(T)$ is a subsemigroup of $\beta S$.
\end{lem}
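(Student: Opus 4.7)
The plan is to unpack both sides and translate between them using the key identity $\cU_\alpha \cdot \cU_\beta = \cU_{\alpha \cdot \beta^*}$ established at the end of Section 2.1, together with the fact that $\pi(\alpha) = \pi(\beta)$ is by definition equivalent to $\alpha \sim \beta$.

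For the forward direction, I would start with arbitrary $\cU, \cV \in \pi(T)$ and choose nonstandard generators $\alpha, \beta \in T$ with $\pi(\alpha) = \cU$ and $\pi(\beta) = \cV$. The u-subsemigroup hypothesis supplies $\gamma \in T$ with $\gamma \sim \alpha \cdot \beta^*$. Then
\[
\cU \cdot \cV \;=\; \cU_\alpha \cdot \cU_\beta \;=\; \cU_{\alpha \cdot \beta^*} \;=\; \cU_\gamma \;=\; \pi(\gamma) \in \pi(T),
\]
so $\pi(T)$ is closed under the semigroup operation.

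For the reverse direction, take $\alpha, \beta \in T$; then $\cU_\alpha, \cU_\beta \in \pi(T)$, and the hypothesis that $\pi(T)$ is a subsemigroup gives $\cU_\alpha \cdot \cU_\beta \in \pi(T)$. Using the identity, this product equals $\cU_{\alpha \cdot \beta^*}$, so there exists $\gamma \in T$ with $\pi(\gamma) = \cU_{\alpha \cdot \beta^*}$, i.e., $\gamma \sim \alpha \cdot \beta^*$, which is exactly the u-subsemigroup property.

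The argument is essentially a one-line translation in each direction; there is no real obstacle, since the identity $\cU_\alpha \cdot \cU_\beta = \cU_{\alpha \cdot \beta^*}$ has already done all of the work. The only point to watch is the free use of nonstandard generators for ultrafilters, which requires sufficient saturation (assumed throughout the paper) so that every element of $\pi(T)$ actually lifts to some element of $T$.
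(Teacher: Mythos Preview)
Your argument is correct and is exactly the unpacking the paper has in mind; the lemma is stated there with ``The following is clear'' and no proof, and your two one-line translations via $\cU_\alpha\cdot\cU_\beta=\cU_{\alpha\cdot\beta^*}$ are precisely what makes it clear. One small remark: your closing comment about saturation is unnecessary here, since every element of $\pi(T)$ lifts to some element of $T$ simply by the definition of $\pi(T)$ as the image of $T$---no saturation is required for that step.
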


\begin{cor}
Every nonempty closed u-subsemigroup of $S^*$ contains an idempotent element.
\end{cor}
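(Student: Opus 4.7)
The plan is to push the problem through $\pi$ to $\beta S$, apply the Ellis-type Fact recorded above, and then pull an idempotent ultrafilter back to an idempotent element in $T$.

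Let $T\subseteq S^*$ be a nonempty closed u-subsemigroup. First I would verify that $\pi(T)\subseteq \beta S$ is a nonempty compact subsemigroup. That it is a subsemigroup is immediate from the lemma. For compactness, observe that $\pi:S^*\to \beta S$ is continuous when $S^*$ carries the u-topology: the basic clopen sets $\bar A = \{\cU\in\beta S : A\in \cU\}$ have preimage exactly $A^*$, which is a basic clopen set of the u-topology. Since $S^*$ is u-compact and $T$ is u-closed, $T$ is u-compact, so $\pi(T)$ is compact in $\beta S$, and hence closed (as $\beta S$ is Hausdorff).

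Now apply the Fact to the nonempty compact subsemigroup $\pi(T)$ of $\beta S$ to obtain an idempotent ultrafilter $\cU\in \pi(T)$, i.e.\ $\cU\cdot \cU = \cU$. Pick any $\alpha\in T$ with $\pi(\alpha) = \cU$; such $\alpha$ exists by the definition of $\pi(T)$. Then $\cU_\alpha = \cU$, and using the formula $\cU_\alpha\cdot \cU_\alpha = \cU_{\alpha\cdot \alpha^*}$ established at the end of Subsection 2.1, we conclude $\cU_{\alpha\cdot\alpha^*} = \cU_\alpha$, which is exactly the statement $\alpha\cdot\alpha^* \sim \alpha$. Thus $\alpha\in T$ is an idempotent element in the sense of the definition.

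The only real obstacle is the compactness step; everything else is formal translation. The subtle point there is that one is tempted to use the quotient map description $\bar\pi: S^*/{\sim}\to \beta S$ and say ``$\beta S$ is the Hausdorff separation of $S^*$'', so closed sets correspond to closed sets. This works, but I find it cleaner to argue directly via continuity of $\pi$ plus u-compactness of $S^*$, since that avoids any fiddling with saturated-quotient issues. Everything else then follows mechanically from the identity $\cU_\alpha\cdot\cU_\beta=\cU_{\alpha\cdot\beta^*}$, which is precisely the reason that idempotency of an element was defined via $\alpha\cdot\alpha^*\sim \alpha$ rather than the naive $\alpha\cdot \alpha\sim \alpha$.
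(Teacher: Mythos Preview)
Your proof is correct and follows exactly the route the paper intends: the corollary is stated without proof precisely because it is meant to be read off from the preceding Fact (Ellis) and Lemma via the continuity of $\pi$, which you have spelled out carefully. There is nothing to add.
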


We next aim to prove Hindman's theorem for an abitrary semigroup.  We should first define the arbitrary semigroup analog of an FS-set:

\begin{defn}
For a sequence $\langle s_n \rangle_{n=1}^\infty$ from $S$, we set
$$FP(\langle s_n\rangle_{n=1}^\infty):=\left\{\prod_{i=1}^k s_{j_i} \ : \ j_1<j_2<\cdots<j_k, k>0\right\}.$$  (One defines the notion FP$(\langle s_n\rangle_{n=1}^m)$ in an analogous fashion.)  We say that $A\subseteq S$ is an \textbf{FP-set} if there is a sequence $\langle s_n\rangle_{n=1}^\infty$ from $S$ such that FP$(\langle s_n\rangle_{n=1}^\infty)\subseteq A$.
\end{defn}

We now wish to show that if $\alpha$ is idempotent and $\alpha\in A^*$, then $A$ is an FP-set.  The following definitions will become useful:

\begin{defn}
For $A\subseteq S$ and $\alpha\in S^*$, we set 
\begin{itemize}
\item $A_\alpha:=\{s\in S \ : \ s\cdot \alpha\in A^*\}$ and
\item $\aA:=A\cap A_\alpha$.
\end{itemize}
\end{defn}

The following lemma is immediate from the definitions:

\begin{lem}
$\alpha\in S^*$ is idempotent if and only if:  for every $A\subseteq S$, if $\alpha\in A^*$, then $\alpha\in \aA^*$.  In this case, if $s\in A_\alpha$ (resp. $s\in \aA$), then $s\alpha\in A_\alpha^*$ (resp. $s\alpha\in \aA^*$).
\end{lem}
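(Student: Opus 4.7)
The plan is to reduce both assertions of the lemma to the key identity $\cU_\alpha\cdot \cU_\beta = \cU_{\alpha\cdot \beta^*}$ established at the end of Subsection 2.1, together with the elementary fact that if one ultrafilter is contained in another then they coincide.

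First I would unpack the definition of idempotency. Saying $\alpha\cdot\alpha^*\sim \alpha$ is the same as $\cU_\alpha=\cU_\alpha\cdot\cU_\alpha$, and the defining formula for the semigroup operation on $\beta S$ translates the condition $A\in \cU_\alpha\cdot\cU_\alpha$ exactly into $A_\alpha\in \cU_\alpha$, i.e., $\alpha\in A_\alpha^*$. Thus idempotency is equivalent to the assertion: for every $A\subseteq S$, $\alpha\in A^*\Leftrightarrow \alpha\in A_\alpha^*$. The forward direction of the lemma is then immediate by intersection, since $\alpha\in A^*\cap A_\alpha^*=(\aA)^*$. For the converse, the stated implication together with $\aA\subseteq A_\alpha$ yields $\cU_\alpha\subseteq \cU_\alpha\cdot \cU_\alpha$, which forces equality and hence idempotency.

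For the ``in this case'' clause I would argue as follows. Assume idempotency and $s\in A_\alpha$, so that $s\alpha\in A^*$. Transferring the definition of $A_\alpha$ shows $A_\alpha^*=\{\tau\in S^*:\tau\cdot\alpha^*\in A^{**}\}$, so the target $s\alpha\in A_\alpha^*$ unravels to $s\alpha\cdot \alpha^*\in A^{**}$. Applying the key identity twice together with idempotency,
\[\cU_{s\alpha\cdot\alpha^*}=\cU_{s\alpha}\cdot\cU_\alpha=\cU_s\cdot\cU_\alpha\cdot\cU_\alpha=\cU_s\cdot\cU_\alpha=\cU_{s\alpha},\]
so $s\alpha\cdot\alpha^*\in A^{**}$ iff $s\alpha\in A^*$, which is the hypothesis. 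The $\aA$ version then follows at once: for $s\in \aA$ we have $s\in A_\alpha$, so $s\alpha\in A_\alpha^*$ by what was just shown, while $s\in A_\alpha$ also gives $s\alpha\in A^*$, hence $s\alpha\in (\aA)^*$.

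The main ``obstacle'' is really only bookkeeping: one must keep track of which level of iterated extension each element lives on, and be comfortable seeing $\alpha^*\in S^{**}$ sit alongside standard left-shifts. Once the identity $\cU_\alpha\cdot\cU_\beta=\cU_{\alpha\cdot\beta^*}$ is internalized, the lemma truly is immediate, matching the author's own remark.
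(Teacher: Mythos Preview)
Your proposal is correct and is precisely the unpacking that the paper has in mind when it declares the lemma ``immediate from the definitions'' (the paper gives no proof at all). Every step you give---the translation of $A\in\cU_\alpha\cdot\cU_\alpha$ into $\alpha\in A_\alpha^*$, the ultrafilter containment forcing equality, and the chain $\cU_{s\alpha\cdot\alpha^*}=\cU_{s\alpha}\cdot\cU_\alpha=\cU_s\cdot\cU_\alpha\cdot\cU_\alpha=\cU_{s\alpha}$---is exactly the intended verification.
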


We can now prove:

\begin{prop}\label{idemFP}
Suppose that $\alpha\in S^*$ is idempotent and $\alpha\in A^*$.  Then $A$ is an FP-set.
\end{prop}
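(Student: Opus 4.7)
The plan is to construct the sequence $\langle s_n\rangle$ recursively, maintaining the invariant that every finite product from the sequence built so far lies in $\aA$; since $\aA\subseteq A$, this will immediately give $FP(\langle s_n\rangle_{n=1}^\infty)\subseteq A$.

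For the base case, the preceding lemma applied to the hypotheses ``$\alpha$ idempotent'' and ``$\alpha\in A^*$'' gives $\alpha\in \aA^*$, so $\aA$ is nonempty (a standard set with nonempty $*$-extension is itself nonempty). Pick any $s_1\in \aA$; then trivially $FP(\langle s_i\rangle_{i=1}^1)=\{s_1\}\subseteq \aA$.

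For the inductive step, suppose $s_1,\ldots,s_n$ have been chosen with $P:=FP(\langle s_i\rangle_{i=1}^n)\subseteq \aA$. The new element $s_{n+1}$ should satisfy $s_{n+1}\in \aA$ and $t\cdot s_{n+1}\in \aA$ for every $t\in P$, since then every product in $FP(\langle s_i\rangle_{i=1}^{n+1})$ is either already in $P$, or has the form $s_{n+1}$, or has the form $t\cdot s_{n+1}$ with $t\in P$. Thus it suffices to produce an element of
$$C:=\aA\cap \bigcap_{t\in P} t^{-1}(\aA).$$
I would show $\alpha\in C^*$: idempotence gives $\alpha\in \aA^*$, and for each $t\in P\subseteq \aA\subseteq A_\alpha$ the second clause of the preceding lemma yields $t\alpha\in \aA^*$, which by transfer is the same as $\alpha\in (t^{-1}\aA)^*$. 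Since $P$ is finite, $\alpha$ lies in the $*$-extension of this finite intersection, so $C$ is nonempty and we may pick $s_{n+1}\in C$.

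I do not anticipate any real obstacle: the lemma was designed precisely to encode idempotence in the form needed for this induction, and the only subtlety is that the intersection defining $C$ must be finite in order to remain in $\cU_\alpha$---which is automatic since $P$ consists of finitely many products from an $n$-element initial segment of the sequence.
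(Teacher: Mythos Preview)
Your proof is correct and is essentially the same argument as the paper's: both maintain the invariant $FP(\langle s_i\rangle_{i=1}^n)\subseteq \aA$, use the lemma to get $t\alpha\in \aA^*$ for each $t$ in the (finite) current product set, and then pull a single witness $s_{n+1}$ down to the standard world. The only cosmetic difference is that the paper packages the inductive step as ``$FP(\langle x_n\rangle_{n=1}^m)\cdot\alpha\subseteq \aA^*$, so by transfer there is $x_{m+1}\in\aA$ with $FP(\langle x_n\rangle_{n=1}^m)\cdot x_{m+1}\subseteq\aA$,'' whereas you write out the finite intersection $C$ explicitly and note $\alpha\in C^*$.
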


\begin{proof}
We recursively construct a sequence $\langle x_n \rangle_{n=1}^\infty$ such that, for all $m\in \bb N$, we have FP$(\langle x_n \rangle_{n=1}^m)\subseteq \aA$.  Since $\alpha\in \aA^*$, there is $x_1\in \aA$.  Suppose now that $\langle x_n \rangle_{n=1}^m$ has been defined with FP$(\langle x_n \rangle_{n=1}^m)\subseteq \aA$.  By the previous lemma, we have FP$(\langle x_n \rangle_{n=1}^m)\cdot \alpha\subseteq \aA^*$.  By transfer, there is $x_{m+1}\in \aA$ with FP$(\langle x_n \rangle_{n=1}^m)\cdot x_{m+1}\subseteq \aA$, whence $x_{m+1}$ is as desired.
\end{proof}

Note that, if $\alpha\notin S$ (e.g. when $S$ has no idempotent elements), then we can assume that the sequence above is injective.  In particular, when $S=\bb N$, we can take the sequence above to be increasing.

\begin{cor}[Hindman's theorem]
Suppose that $\bb N=\bigsqcup_{i=1}^n A_i$.  Then some $A_i$ is an FP-set.
\end{cor}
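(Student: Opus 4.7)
The plan is to combine the Corollary about nonempty closed u-subsemigroups containing idempotents with Proposition \ref{idemFP}. Everything is already in place; the proof should be a two-line deduction.

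First, I would observe that $\bb N^*$ itself is trivially a u-subsemigroup of $\bb N^*$: given $\alpha,\beta\in \bb N^*$, the element $\gamma:=\alpha\cdot \beta^*$ lives in $\bb N^{**}$, but what the definition actually requires is that there exist $\gamma\in \bb N^*$ with $\gamma\sim \alpha\cdot\beta^*$, which amounts to saying that $\cU_\alpha\cdot\cU_\beta$ is an ultrafilter on $\bb N$ -- and it is. Equivalently, $\pi(\bb N^*)=\beta\bb N$ is a subsemigroup of $\beta\bb N$. Since $\bb N^*$ is closed and nonempty in the u-topology, the Corollary supplies an idempotent $\alpha\in \bb N^*$.

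Next, from the partition $\bb N=\bigsqcup_{i=1}^n A_i$, transfer gives $\bb N^*=\bigsqcup_{i=1}^n A_i^*$, so $\alpha\in A_i^*$ for some $i\in\{1,\ldots,n\}$. Applying Proposition \ref{idemFP} to this $A_i$ and this idempotent $\alpha$ yields that $A_i$ is an FP-set, which is the desired conclusion.

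There is no real obstacle here: the entire content of Hindman's theorem has been pushed into the two earlier statements (existence of idempotents in closed u-subsemigroups, and the fact that sets belonging to an idempotent are FP-sets). The only tiny point worth flagging in the write-up is the remark after Proposition \ref{idemFP} ensuring that the sequence witnessing the FP-set property can be taken to be increasing when $S=\bb N$, so that the result matches the classical formulation of Hindman's theorem in terms of FS-sets rather than merely FP-sets.
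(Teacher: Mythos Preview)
Your proof is correct and follows exactly the same approach as the paper's own proof, which is the one-liner ``Fix an idempotent $\alpha\in S^*$ and take $A_i$ with $\alpha\in A_i^*$.'' You have merely spelled out the justifications (that $\bb N^*$ is a nonempty closed u-subsemigroup, and that the partition transfers to $\bb N^*$) that the paper leaves implicit.
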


\begin{proof}
Fix an idempotent $\alpha \in S^*$ and take $A_i$ with $\alpha\in A_i^*$. 
\end{proof}

There is a converse to Proposition \ref{idemFP}.  For a nonstandard proof, see, for example, \cite[Lemma 9.5]{book}.

\begin{prop}
Suppose that $A$ is an FP-set.  Then there is an idempotent $\alpha\in A^*$.
\end{prop}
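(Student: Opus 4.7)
The plan is to find a nonempty closed u-subsemigroup of $S^*$ contained in $A^*$, to which the corollary preceding Proposition \ref{idemFP} then supplies an idempotent (which necessarily lies in $A^*$). Let $\langle s_n\rangle_{n=1}^\infty$ be a sequence witnessing that $A$ is an FP-set, so $FP(\langle s_n\rangle_{n=1}^\infty)\subseteq A$, and for each $n\in \bb N$ set $B_n:=FP(\langle s_k\rangle_{k=n}^\infty)$. These sets are nested, nonempty, and all contained in $A$, so I would consider
$$T:=\bigcap_{n=1}^\infty B_n^*\subseteq A^*.$$
The set $T$ is closed in the u-topology as an intersection of clopens and nonempty by (countable) saturation applied to the nested family $\{B_n^*\}$.

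The heart of the argument is the u-subsemigroup property. Given $\alpha,\beta\in T$, I need $\gamma\in T$ with $\gamma\sim \alpha\cdot \beta^*$. The key combinatorial observation is that, for every standard $n$,
$$B_n\subseteq \{s\in S \ :\ s\cdot \beta\in B_n^*\}.$$
Indeed, any $s=s_{j_1}\cdots s_{j_k}\in B_n$ (with $n\leq j_1<\cdots<j_k$) has the \emph{standard} next index $j_k+1$, so $\beta\in B_{j_k+1}^*$; the evident standard statement ``left multiplication by $s$ maps $B_{j_k+1}$ into $B_n$'' then transfers to give $s\cdot\beta\in B_n^*$. Since $\alpha\in B_n^*$, the displayed containment puts $\{s\in S:s\cdot \beta\in B_n^*\}$ into $\cU_\alpha$, which is precisely $B_n\in \cU_\alpha\cdot \cU_\beta=\cU_{\alpha\cdot \beta^*}$. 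As this holds for every $n$, saturation produces $\gamma\in \bigcap_{C\in \cU_{\alpha\cdot \beta^*}}C^*$, and such a $\gamma$ automatically lies in $T$ (since every $B_n$ is in $\cU_{\alpha\cdot \beta^*}$) and satisfies $\cU_\gamma=\cU_{\alpha\cdot \beta^*}$, as required.

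Applying the corollary on nonempty closed u-subsemigroups of $S^*$ then yields an idempotent $\alpha\in T\subseteq A^*$, completing the proof. The main obstacle I expect is the combinatorial inclusion $B_n\subseteq\{s:s\cdot\beta\in B_n^*\}$: this is the one place where the specific ``tail'' structure of an FP-set is exploited, and one must notice that the shift $j_k+1$ remains standard (precisely because $s$ itself is a standard element of $B_n$) in order to legitimately invoke $\beta\in B_{j_k+1}^*$. Once that is in hand, everything else is a routine repackaging of saturation and the definitions.
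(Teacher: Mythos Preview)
Your argument is correct. The set $T=\bigcap_n B_n^*$ is a nonempty closed u-subsemigroup of $S^*$, and the key inclusion $B_n\subseteq (B_n)_\beta$ is justified exactly as you say: for $s=s_{j_1}\cdots s_{j_k}\in B_n$ the index $j_k+1$ is standard, so $\beta\in B_{j_k+1}^*$, and transfer of the standard inclusion $s\cdot B_{j_k+1}\subseteq B_n$ gives $s\cdot\beta\in B_n^*$. The rest is routine.

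As for comparison with the paper: the paper does not actually prove this proposition. It simply states the result and refers the reader to \cite[Lemma 9.5]{book} for a nonstandard proof. Your argument is the expected one---it is the nonstandard rendering of the classical Galvin--Glazer proof that $\bigcap_n \overline{FP(\langle s_k\rangle_{k\ge n})}$ is a closed subsemigroup of $\beta S$---and is presumably what the cited reference contains. One minor streamlining: since $T$ is an intersection of basic clopens it is automatically full (closed under $\sim$), so once you know $B_n\in\cU_{\alpha\cdot\beta^*}$ for every $n$, \emph{any} $\gamma\in S^*$ with $\gamma\sim\alpha\cdot\beta^*$ (whose existence is just the surjectivity of $\pi$) lies in $T$; you need not invoke saturation a second time over the whole ultrafilter $\cU_{\alpha\cdot\beta^*}$.
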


\begin{cor}[Strong Hindman's Theorem]
The notion of being an FP-set is partition regular:  if $A$ is an FP-set and $A=\bigsqcup_{i=1}^n A_i$, then some $A_i$ is an FP-set. 
\end{cor}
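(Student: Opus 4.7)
The plan is to combine the two propositions immediately preceding the corollary, using the idempotent-generator correspondence to transport the FP-property from $A$ to some piece $A_i$ of the partition.

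First, I would apply the converse proposition to the FP-set $A$: this produces an idempotent element $\alpha \in S^*$ with $\alpha \in A^*$. Next, I would transfer the finite disjoint partition $A = \bigsqcup_{i=1}^n A_i$ to the nonstandard extension, obtaining $A^* = \bigsqcup_{i=1}^n A_i^*$ (transfer of a finite partition is routine — the statement ``for every $x \in A$, exactly one of $x \in A_1, \ldots, x \in A_n$ holds'' is a bounded first-order sentence). Since $\alpha \in A^*$, there must be some $i \in \{1,\ldots,n\}$ with $\alpha \in A_i^*$.

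Finally, I would invoke Proposition \ref{idemFP}: $\alpha$ is idempotent and $\alpha \in A_i^*$, so $A_i$ is an FP-set, which is exactly what we need.

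There really is no main obstacle here — the work was done in the two propositions sandwiching the corollary. The only sanity check worth making is that the idempotent $\alpha$ produced by the converse proposition is the \emph{same} witness used to extract the monochromatic piece; this is automatic because we pick $\alpha$ first and then choose the color class containing it, rather than the other way around. This is precisely the nonstandard mirror of the classical ultrafilter argument: an idempotent ultrafilter containing $A$ must contain some $A_i$, and membership in an idempotent ultrafilter forces the FP-property.
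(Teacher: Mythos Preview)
Your proposal is correct and is exactly the argument the paper has in mind: the corollary is stated without proof precisely because it follows immediately by combining the two surrounding propositions in the way you describe. There is nothing to add or correct.
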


\subsection{Three notions of largeness}

The following notions of largeness will appear throughout this note:

\begin{defn}
Suppose that $A\subseteq S$.  We say that:
\begin{enumerate}
\item $A$ is \textbf{thick} if, for every finite $F\subseteq S$, there is $t\in S$ such that $Ft\subseteq A$.  
\item $A$ is \textbf{syndetic} if there is a finite $G\subseteq S$ such that $S=G^{-1}A$. 
\item $A$ is \textbf{piecewise syndetic} if there is a finite $G\subseteq S$ such that, for every finite $L\subseteq S$, there is $a\in S$ with $La\subseteq G^{-1}A$.  
\end{enumerate}
\end{defn}

In the above definitions, $G^{-1}A:=\{s\in S \ : \ gs\in A \text{ for some }g\in G\}$.

Here are the nonstandard equivalents:

\begin{lem}
Suppose that $A\subseteq S$.
\begin{enumerate}
\item $A$ is thick if and only if there is $\alpha\in S^*$ such that $S\alpha\subseteq A^*$.  
\item $A$ is syndetic if and only if $S^*\subseteq S^{-1}A^*$ if and only if there is a finite $G\subseteq S$ such that $S^*\subseteq G^{-1}A^*$.
\item $A$ is piecewise syndetic if and only if there is $\alpha\in S^*$ and finite $G\subseteq S$ such that  $S\alpha\subseteq G^{-1}A^*$.
\end{enumerate}
\end{lem}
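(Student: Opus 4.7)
The plan is to prove each of the three equivalences by the same standard pairing of \emph{transfer} (for the direct implications involving a single finite $G$ or $F$) and \emph{saturation} (to produce a nonstandard witness $\alpha\in S^*$ out of a finitely satisfiable family of internal conditions). A point worth noting up front is that for any standard finite $G\subseteq S$ one has $(G^{-1}A)^* = G^{-1}A^*$, since $G^{-1}A$ is defined by a finite disjunction over $G$ that transfers literally; I will use this identification without further comment below.

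For (1), thickness of $A$ asserts, by transfer, that for each finite $F\subseteq S$ the internal set $B_F:=\{t\in S^*\ :\ Ft\subseteq A^*\}$ is nonempty, and $B_F\cap B_{F'}\supseteq B_{F\cup F'}$, so the family $\{B_F\ :\ F\in \mathcal P_f(S)\}$ is downward-directed. Saturation yields a common point $\alpha\in \bigcap_F B_F$, giving $S\alpha\subseteq A^*$. Conversely, any such $\alpha$ witnesses $\exists t\ Ft\subseteq A^*$ in $S^*$ for each finite $F\subseteq S$, so by downward transfer such a $t$ exists in $S$.

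For (2), the equivalence of ``$A$ is syndetic'' with ``there is finite $G\subseteq S$ such that $S^*\subseteq G^{-1}A^*$'' is a direct transfer of $S=G^{-1}A$. The implication to $S^*\subseteq S^{-1}A^*$ is trivial. For the converse, assume $S^*\subseteq S^{-1}A^*$; equivalently, the family of internal sets $\{S^*\setminus s^{-1}A^*\ :\ s\in S\}$ has empty intersection. By saturation some finite subfamily, indexed by a finite $G\subseteq S$, already has empty intersection, i.e.\ $S^*\subseteq G^{-1}A^*$.

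For (3), piecewise syndeticity gives a finite $G\subseteq S$ such that, for each finite $L\subseteq S$, the internal set $C_L:=\{a\in S^*\ :\ La\subseteq G^{-1}A^*\}$ is nonempty (transfer of the defining condition). Again $C_L\cap C_{L'}\supseteq C_{L\cup L'}$, so saturation yields $\alpha\in \bigcap_L C_L$ with $S\alpha\subseteq G^{-1}A^*$. The converse proceeds as in (1): any such $\alpha$ witnesses $\exists a\ La\subseteq G^{-1}A^*$ in $S^*$ for each finite $L$, and transfer returns a standard witness.

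The main obstacle, such as it is, is purely bookkeeping: confirming that the sets $B_F$, $s^{-1}A^*$, $C_L$ are genuinely internal (they are, being defined by finite conjunctions, over the standard indices $F$, $s$, $L$, of internal conditions), and that $G^{-1}A^* = (G^{-1}A)^*$ for standard finite $G$. Once these are in place, no single step presents any real difficulty.
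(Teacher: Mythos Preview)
Your argument is correct in every part. The paper itself does not supply a proof of this lemma; it simply lists the three equivalences as ``the nonstandard equivalents'' of the definitions and moves on, so there is no approach to compare against. Your proof is exactly the routine transfer-plus-saturation verification the paper is implicitly leaving to the reader, and your remark that $(G^{-1}A)^* = G^{-1}A^*$ for standard finite $G$ is the one point worth making explicit.
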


\section{Minimal elements and central sets}

\subsection{Facts about minimal ideals}

Recall that a subset $I$ of $\beta S$ is a \textbf{left} (resp. \textbf{right}) \textbf{ideal} if for all $\cU\in \beta S$ and $\cal V\in I$, we have $\cU \cdot \cal V\in I$ (resp. $\cal V\cdot \cU\in I$).  $I$ is an \textbf{ideal} if it is both a left and right ideal.  A left (resp. right) ideal $I$ is \textbf{minimal}  if there is no left (resp. right) ideal properly contained in $I$.

We will need the following facts about minimal ideals in $\beta S$.  None of these facts are especially difficult and can be found in \cite{HS}.
\begin{facts}

\

\begin{enumerate}
\item Every left ideal in $\beta S$ contains a minimal left ideal.
\item Minimal left ideals are closed.
\item $\beta S$ has a smallest ideal $K(\beta S)$, that is, $K(\beta S)$ is contained in all ideals of $\beta S$.
\item $K(\beta S)$ is the union of the minimal left ideals of $\beta S$ and is also the union of the minimal right ideals of $\beta S$.
\end{enumerate}
\end{facts}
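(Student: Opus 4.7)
The plan is to derive all four facts from Zorn's lemma together with the observation that for each $\cal V\in\beta S$, right multiplication $\rho_{\cal V}:\beta S\to\beta S$, $\cU\mapsto \cU\cdot\cal V$, is continuous, so principal left ideals $\beta S\cdot \cU=\rho_\cU(\beta S)$ are continuous images of a compact space and therefore closed. Fact 2 then falls out immediately: any minimal left ideal $L$ must equal $\beta S\cdot \cU$ for every $\cU\in L$ (since $\beta S\cdot\cU$ is a left ideal contained in $L$), and is therefore closed.

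For fact 1, given a left ideal $I$, I would look at the family $\cal C$ of closed left ideals contained in $I$, which is nonempty since $\beta S\cdot \cU\in\cal C$ for any $\cU\in I$. Any descending chain in $\cal C$ has nonempty intersection by compactness, and intersections of closed left ideals are again closed left ideals, so Zorn supplies a $\subseteq$-minimal element $M\in\cal C$. To see that $M$ is a genuine minimal left ideal and not merely a minimal \emph{closed} one, let $L'\subseteq M$ be any nonempty left ideal and pick $\cU\in L'$; then $\beta S\cdot\cU\subseteq L'$ is a closed left ideal contained in $I$, so it lies in $\cal C$, and minimality of $M$ forces $\beta S\cdot \cU=M$, hence $L'=M$.

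For facts 3 and 4, the crucial lemma I would establish first is: for every minimal left ideal $L$ and every $\cal V\in\beta S$, the set $L\cdot \cal V$ is again a minimal left ideal. It is a left ideal by associativity, and for minimality any nonempty sub-left-ideal $L''\subseteq L\cdot \cal V$ contains some $\cU\cdot \cal V$ with $\cU\in L$, whence $\beta S\cdot(\cU\cdot \cal V)=(\beta S\cdot \cU)\cdot \cal V=L\cdot \cal V\subseteq L''$. Granted this, I would define $K:=\bigcup\{L : L\text{ is a minimal left ideal of }\beta S\}$, which is manifestly a left ideal (a union of left ideals) and is a right ideal as well, because for $\cU\in L\subseteq K$ and $\cal V\in\beta S$ we have $\cU\cdot \cal V\in L\cdot \cal V\subseteq K$ by the lemma. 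Hence $K$ is an ideal.

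The main obstacle, and the step where one must be careful about sides, is showing that $K\subseteq I$ for every ideal $I$. Given a minimal left ideal $L$, I would pick $\cU\in L$ and an arbitrary $\cal V\in I$, and then observe that $\cal V\cdot \cU$ lies in $\beta S\cdot L\subseteq L$ (because $L$ is a left ideal) and simultaneously in $I\cdot \beta S\subseteq I$ (because $I$ is a right ideal). Thus $L\cap I$ is a nonempty left ideal contained in the minimal $L$, so $L\cap I=L$, i.e., $L\subseteq I$; taking unions yields $K\subseteq I$, which proves fact 3 and the left-ideal half of fact 4. The right-ideal half follows by the symmetric argument with the roles of left and right interchanged.
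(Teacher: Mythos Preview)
The paper does not supply its own proof of these facts; it simply records them and defers to \cite{HS}.  Your arguments for facts~1, 2, 3, and the left-ideal half of fact~4 are correct and are essentially the standard ones.

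The gap is in your last sentence.  The phrase ``by the symmetric argument with the roles of left and right interchanged'' hides a real problem: the left/right symmetry here is only \emph{algebraic}, whereas your proof of fact~1 is not purely algebraic.  You used continuity of $\rho_{\cal V}:\cU\mapsto\cU\cdot\cal V$ to conclude that each principal left ideal $\beta S\cdot\cU$ is closed, and that is precisely what makes the Zorn argument (and the passage from ``minimal among closed left ideals'' to ``minimal among all left ideals'') succeed.  In $\beta S$, however, left multiplication $\cal V\mapsto\cU\cdot\cal V$ by a nonprincipal $\cU$ is \emph{not} continuous in general --- the paper itself only asserts continuity of $\cU\mapsto\cU_s\cdot\cU$ and $\cU\mapsto\cU\cdot\cal V$ --- so principal right ideals $\cU\cdot\beta S$ need not be closed and the mirror-image Zorn argument breaks down.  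In particular, you have not shown that any minimal right ideal exists, so your ``symmetric'' description of $K$ as a union of minimal right ideals is, as written, a union over a possibly empty family.

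The usual repair does not restore symmetry but instead leverages what you have already built.  Take an idempotent $e$ in some minimal left ideal $L$ (available by Ellis, since $L$ is a closed, hence compact, subsemigroup); one checks that $e\,\beta S\,e=eL$ is a group with identity $e$, and then that $e\cdot\beta S$ is a minimal right ideal: given a nonempty right ideal $R\subseteq e\beta S$ and $r\in R$, one has $re\in eL$, so $re$ has an inverse $u\in eL$, and then $e=(re)u=r(eu)\in R$.  Once a single minimal right ideal is known to exist, the genuinely algebraic portions of your argument (the lemma that $\cal V\cdot R$ is again a minimal right ideal, and the containment $R\subseteq I$ for every ideal $I$) do symmetrize and give $K=\bigcup R$.
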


We now study the corresponding nonstandard perspective:

\begin{defn}
$\alpha\in S^{(\infty)*}$ is \textbf{minimal} if $\cU_\alpha\in K(\beta S)$.
\end{defn}

The following is obvious from the fact that $K(\beta S)$ is an ideal:

\begin{lem}
Suppose that $\alpha\in S^*$ is minimal and $\beta,\gamma\in S^*$ are arbitrary.  Then the following are minimal:  $\beta \alpha^*$, $\alpha\gamma^*$, $\beta\alpha^*\gamma^{**}$.
\end{lem}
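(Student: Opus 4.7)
The plan is to translate each claim directly through the product formula $\mathcal{U}_\alpha\cdot\mathcal{U}_\beta=\mathcal{U}_{\alpha\cdot\beta^*}$ established at the end of Section~2.1, and then invoke the fact that $K(\beta S)$ is a two-sided ideal of $\beta S$.

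For the first two claims, everything is immediate. Since $\mathcal{U}_{\beta\alpha^*}=\mathcal{U}_\beta\cdot\mathcal{U}_\alpha$ and $\mathcal{U}_\alpha\in K(\beta S)$ by hypothesis, and since $K(\beta S)$ is a left ideal, we get $\mathcal{U}_{\beta\alpha^*}\in K(\beta S)$, so $\beta\alpha^*$ is minimal. Dually, $\mathcal{U}_{\alpha\gamma^*}=\mathcal{U}_\alpha\cdot\mathcal{U}_\gamma\in K(\beta S)$ because $K(\beta S)$ is a right ideal.

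The third claim requires one extra step: I need a three-fold version of the product formula, namely
$$\mathcal{U}_\alpha\cdot\mathcal{U}_\beta\cdot\mathcal{U}_\gamma=\mathcal{U}_{\alpha\beta^*\gamma^{**}}.$$
To obtain it, I first apply the given formula once to get $\mathcal{U}_\alpha\cdot\mathcal{U}_\beta=\mathcal{U}_{\alpha\beta^*}$, where $\alpha\beta^*\in S^{**}$. Then I need to multiply by $\mathcal{U}_\gamma$ on the right. Since $\gamma\in S^*$ and $\gamma^*\in S^{**}$ define the same ultrafilter on $S$, I may replace $\mathcal{U}_\gamma$ with $\mathcal{U}_{\gamma^*}$ and apply the product formula at the next level of the tower (this is just the transfer of the formula from Section~2.1 from $S$ to $S^*$): for $\delta,\varepsilon\in S^{**}$, $\mathcal{U}_\delta\cdot\mathcal{U}_\varepsilon=\mathcal{U}_{\delta\cdot\varepsilon^*}$. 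Taking $\delta=\alpha\beta^*$ and $\varepsilon=\gamma^*$ gives $\mathcal{U}_{\alpha\beta^*}\cdot\mathcal{U}_{\gamma^*}=\mathcal{U}_{\alpha\beta^*\gamma^{**}}$, as desired.

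With this formula in hand, the third statement reduces exactly as before: $\mathcal{U}_{\beta\alpha^*\gamma^{**}}=\mathcal{U}_\beta\cdot\mathcal{U}_\alpha\cdot\mathcal{U}_\gamma$, and since $\mathcal{U}_\alpha\in K(\beta S)$ and $K(\beta S)$ is a two-sided ideal, this triple product lies in $K(\beta S)$, so $\beta\alpha^*\gamma^{**}$ is minimal. The only conceptual obstacle is the bookkeeping for the triple product formula across iterated nonstandard extensions; once the level-shifting trick ($\gamma\leadsto\gamma^*$) is noted, the rest is a pure consequence of Facts~3.1, in particular that $K(\beta S)$ is a minimum two-sided ideal.
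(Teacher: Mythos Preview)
Your overall strategy---translate through the product formula to $\beta S$ and invoke that $K(\beta S)$ is a two-sided ideal---is exactly what the paper intends (the paper merely says the lemma is ``obvious from the fact that $K(\beta S)$ is an ideal''). Your treatment of $\beta\alpha^*$ and $\alpha\gamma^*$ is correct.

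For $\beta\alpha^*\gamma^{**}$, though, your justification of the three-fold product formula has a genuine flaw. The claim ``for $\delta,\varepsilon\in S^{**}$, $\cU_\delta\cdot\cU_\varepsilon=\cU_{\delta\varepsilon^*}$'' is \emph{false} in general, and it cannot be obtained by ``transferring'' the formula from Section~2.1: the symbol $\cU$ refers to (standard) ultrafilters on $S$, so transfer would produce a statement about internal ultrafilters on $S^*$, not about $\cU_\cdot$. For a counterexample, take $S=(\bbN,+)$, $\alpha\in\bbN^*\setminus\bbN$, and $\delta=\varepsilon=\alpha+\alpha^*$; then $\cU_\delta+\cU_\varepsilon$ is the four-fold sum $\cU_\alpha+\cU_\alpha+\cU_\alpha+\cU_\alpha$, whereas $\cU_{\delta+\varepsilon^*}=\cU_{\alpha+2\alpha^*+\alpha^{**}}=\cU_\alpha+\cU_{2\alpha}+\cU_\alpha$, and $\cU_{2\alpha}\neq\cU_\alpha+\cU_\alpha$ in general (this is precisely the failure the paper warns about). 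Your computation happens to land on the right answer only because your $\varepsilon$ has the special form $\gamma^*$.

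The fix is simple and the detour through $\gamma^*$ unnecessary. Rerunning the derivation in Section~2.1 with the left factor at level $j$ shows that for $\delta\in S^{(j)*}$ and any $\varepsilon$, one has $\cU_\delta\cdot\cU_\varepsilon=\cU_{\delta\cdot\varepsilon^{(j)*}}$. Applying this with $\delta=\beta\alpha^*\in S^{**}$ (so $j=2$) and $\varepsilon=\gamma\in S^*$ gives directly $\cU_{\beta\alpha^*}\cdot\cU_\gamma=\cU_{\beta\alpha^*\gamma^{**}}$, and since $\cU_{\beta\alpha^*}=\cU_\beta\cdot\cU_\alpha\in K(\beta S)$ from your first claim, the right ideal property finishes the job.
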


\begin{defn}
$J\subseteq S^*$ is a \textbf{left $u$-ideal} if, for every $\alpha\in S^*$ and $\beta \in J$, there is $\gamma\in J$ such that $\alpha\cdot \beta^*\sim \gamma$.
\end{defn}

Call $J\subseteq S^*$ \textbf{full} if it is closed under $\sim$.  Recall that $\pi:S^*\to \beta S$ is the canonical projection map $\pi(\alpha)=\cU_\alpha$  The following is obvious:

\begin{lem}
$J\subseteq S^*$ is a left $u$-ideal if and only if $\pi(J)$ is a left ideal of $\beta S$.  In particular, if $I$ is a left ideal of $\beta S$, then $\pi^{-1}(I)$ is a full left u-ideal of $S^*$.
\end{lem}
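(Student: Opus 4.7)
The plan is to unwind the definitions and use the key formula $\cU_\alpha\cdot \cU_\beta = \cU_{\alpha\cdot \beta^*}$ established at the end of Section 2.1, together with the surjectivity of $\pi:S^*\to \beta S$ (which relies on saturation).

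For the forward direction, suppose $J$ is a left $u$-ideal and take $\cU\in \beta S$ and $\cal V\in \pi(J)$. I would use saturation to pick $\alpha\in S^*$ with $\cU = \cU_\alpha$, and pick $\beta\in J$ with $\cal V = \cU_\beta$. Then the key formula gives $\cU\cdot \cal V = \cU_{\alpha\cdot \beta^*}$, and the left $u$-ideal property yields $\gamma\in J$ with $\gamma\sim \alpha\cdot \beta^*$, so $\cU\cdot \cal V = \cU_\gamma \in \pi(J)$. For the reverse direction, if $\pi(J)$ is a left ideal of $\beta S$, take arbitrary $\alpha\in S^*$ and $\beta\in J$; then $\cU_\alpha\cdot \cU_\beta = \cU_{\alpha\cdot \beta^*}$ lies in $\pi(J)$, so there exists $\gamma\in J$ with $\cU_\gamma = \cU_{\alpha\cdot \beta^*}$, i.e., $\gamma\sim \alpha\cdot \beta^*$, as required.

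For the ``in particular'' clause, set $J:=\pi^{-1}(I)$ where $I$ is a left ideal of $\beta S$. Fullness is immediate: if $\alpha\in J$ and $\beta\sim\alpha$, then $\cU_\beta=\cU_\alpha\in I$, so $\beta\in J$. Since $\pi$ is surjective, $\pi(J) = I$ is a left ideal of $\beta S$, and by the equivalence just proved, $J$ is a left $u$-ideal.

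There is no real obstacle here: the content is entirely encoded in the formula $\cU_\alpha \cdot \cU_\beta = \cU_{\alpha\cdot \beta^*}$, which translates the algebra on $\beta S$ into the algebra on $S^*$ modulo $\sim$. The only subtle point worth flagging is the use of saturation to lift a given ultrafilter $\cU\in \beta S$ to some generator $\alpha\in S^*$ in the forward direction, which is precisely what makes the nonstandard picture a faithful reflection of the ultrafilter picture.
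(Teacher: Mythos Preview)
Your proof is correct and is exactly the routine unwinding of definitions that the paper has in mind; indeed, the paper simply states that this lemma is ``obvious'' and gives no separate proof. The only ingredients are the identity $\cU_\alpha\cdot\cU_\beta=\cU_{\alpha\cdot\beta^*}$ and the surjectivity of $\pi$, both of which you invoke appropriately.
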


\begin{cor}
If $J\subseteq S^*$ is a left $u$-ideal, then $J$ contains a minimal element of $S^*$.
\end{cor}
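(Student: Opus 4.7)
The plan is to transport the problem across the projection $\pi\colon S^*\to \beta S$ and invoke the facts about minimal ideals already collected in the excerpt. Since minimality of an element $\alpha\in S^*$ is defined purely in terms of the ultrafilter $\cU_\alpha$, the task reduces to finding some $\alpha\in J$ whose image under $\pi$ lies in $K(\beta S)$.

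First, by the lemma immediately preceding the corollary, the fact that $J$ is a left $u$-ideal of $S^*$ implies that $\pi(J)$ is a left ideal of $\beta S$. Next, I invoke Fact~(1) to extract a minimal left ideal $L$ of $\beta S$ with $L\subseteq \pi(J)$. By Fact~(4), $L\subseteq K(\beta S)$, since $K(\beta S)$ is the union of all minimal left ideals.

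Finally, I choose any $\cU\in L$. Since $L\subseteq \pi(J)$ and $\pi$ is surjective onto $\pi(J)$ when restricted to $J$, there exists some $\alpha\in J$ with $\pi(\alpha)=\cU_\alpha=\cU$. Then $\cU_\alpha\in K(\beta S)$, so $\alpha$ is minimal by definition, and it belongs to $J$, as required.

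There is essentially no genuine obstacle here; the corollary is a direct translation of Fact~(1) plus Fact~(4) through the correspondence between left ideals of $\beta S$ and left $u$-ideals of $S^*$ established in the previous lemma. The only small point requiring attention is to pull the minimal ultrafilter back to an actual element of $J$ (not merely of $S^*$), but this is automatic from $L\subseteq \pi(J)$.
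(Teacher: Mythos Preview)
Your proof is correct and is exactly the argument the paper intends: the corollary is stated without proof there, being an immediate consequence of the preceding lemma (so that $\pi(J)$ is a left ideal) together with Facts~(1) and~(4), and you have spelled this out accurately, including the small observation that the minimal ultrafilter can be pulled back into $J$ itself.
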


The following is also obvious:

\begin{fact}
If $I\subseteq \beta S$ is a minimal left ideal, then $I=\beta S \cdot \cU$ for every $\cU\in I$.
\end{fact}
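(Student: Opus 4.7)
The plan is to exhibit $\beta S\cdot \cU$ as a nonempty left ideal of $\beta S$ that is contained in $I$, and then invoke the minimality hypothesis on $I$ to conclude equality. This two-step strategy is the natural one, since proving the reverse inclusion $I\subseteq \beta S\cdot \cU$ directly would require constructing, for each $\cal V\in I$, some $\cal W\in \beta S$ with $\cal W\cdot \cU=\cal V$, and the minimality argument avoids this altogether.

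The first step is to observe that $\beta S\cdot \cU$ is itself a left ideal. Given $\cal V\in \beta S$ and an arbitrary element $\cal W\cdot \cU\in \beta S\cdot \cU$, associativity of the extended semigroup operation on $\beta S$ yields
$$\cal V\cdot(\cal W\cdot \cU)=(\cal V\cdot \cal W)\cdot \cU\in \beta S\cdot \cU,$$
so the set is closed under left multiplication by arbitrary elements of $\beta S$. It is clearly nonempty.

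The second step is to verify that $\beta S\cdot \cU\subseteq I$. This is immediate from the hypotheses: since $\cU\in I$ and $I$ is a left ideal, $\cal V\cdot \cU\in I$ for every $\cal V\in \beta S$.

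Combining the two steps, $\beta S\cdot \cU$ is a nonempty left ideal of $\beta S$ contained in the minimal left ideal $I$, so $\beta S\cdot \cU=I$. The only point worth flagging is the appeal to associativity of the semigroup operation on $\beta S$, which is a standard property of the Stone-\v{C}ech extension and can be verified from the defining formula for $\cU\cdot \cal V$ recalled in Section 2; no genuine obstacle arises, and this is why the fact is labeled as obvious.
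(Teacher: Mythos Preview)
Your proof is correct and is precisely the standard argument the paper has in mind: the paper does not actually prove this fact, simply labeling it ``obvious'' before stating it. Your two-step verification that $\beta S\cdot \cU$ is a nonempty left ideal contained in $I$, followed by the appeal to minimality, is exactly the intended reasoning.
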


The previous fact says that every minimal left ideal of $\beta S$ is principal and every element of the ideal is a generator.  We need the nonstandard equivalent.  For $\alpha\in S^*$, set $$\cal L(S^*\cdot \alpha^*):=\{\beta\in S^* \ : \ \beta\sim \gamma\cdot \alpha^* \text{ for some }\gamma\in S^*\}.$$  

\begin{lem}
$\cal L(S^*\cdot \alpha^*)=\pi^{-1}(\beta S\cdot \cU_\alpha)$.  Consequently, $\cal L(S^*\cdot \alpha^*)$ is a full left $u$-ideal.
\end{lem}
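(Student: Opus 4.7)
The plan is to prove the set-theoretic equality by unwinding definitions and invoking the key identity $\cU_{\gamma}\cdot \cU_{\alpha}=\cU_{\gamma\cdot\alpha^*}$ established at the end of Section 2.1, and then to derive the corollary from the preceding lemma characterizing left $u$-ideals via $\pi$.

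For the forward inclusion, suppose $\beta\in \cal L(S^*\cdot\alpha^*)$, so $\beta\sim \gamma\cdot\alpha^*$ for some $\gamma\in S^*$. Then $\cU_\beta=\cU_{\gamma\cdot\alpha^*}=\cU_\gamma\cdot\cU_\alpha\in \beta S\cdot\cU_\alpha$, so $\beta\in\pi^{-1}(\beta S\cdot\cU_\alpha)$. For the reverse inclusion, suppose $\cU_\beta\in \beta S\cdot\cU_\alpha$, say $\cU_\beta=\cal V\cdot\cU_\alpha$ for some $\cal V\in\beta S$. Using that $\pi$ is surjective (which is where we quietly invoke saturation), pick $\gamma\in S^*$ with $\cU_\gamma=\cal V$. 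Then $\cU_\beta=\cU_\gamma\cdot\cU_\alpha=\cU_{\gamma\cdot\alpha^*}$, so $\beta\sim \gamma\cdot\alpha^*$, i.e., $\beta\in \cal L(S^*\cdot\alpha^*)$.

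For the ``consequently'' clause, one first notes that $\beta S\cdot\cU_\alpha$ is a left ideal of $\beta S$: for any $\cal W\in\beta S$ and any $\cal V\cdot\cU_\alpha\in \beta S\cdot\cU_\alpha$, associativity gives $\cal W\cdot(\cal V\cdot\cU_\alpha)=(\cal W\cdot\cal V)\cdot\cU_\alpha\in\beta S\cdot\cU_\alpha$. By the lemma immediately preceding the statement, $\pi^{-1}$ of any left ideal is a full left $u$-ideal, so $\cal L(S^*\cdot\alpha^*)=\pi^{-1}(\beta S\cdot\cU_\alpha)$ is a full left $u$-ideal.

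No step here is a serious obstacle; the only subtlety worth flagging is to make sure that when passing from the ultrafilter $\cal V$ back to a nonstandard generator $\gamma$ one appeals to the earlier remark that under sufficient saturation every ultrafilter on $S$ has a generator in $S^*$. Once that is in hand, the proof is a two-line verification followed by a citation of the previous lemma.
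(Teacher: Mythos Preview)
Your proof is correct and is exactly the routine unwinding of definitions that the paper has in mind; indeed, the paper states this lemma without proof, treating it as immediate from the identity $\cU_\gamma\cdot\cU_\alpha=\cU_{\gamma\cdot\alpha^*}$ and the preceding lemma on $\pi^{-1}$ of left ideals. There is nothing to add.
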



\begin{defn}
We say that $J\subseteq S^*$ is a \textbf{minimal left $u$-ideal} if and only if $\pi(J)$ is a minimal left ideal of $\beta S$.
\end{defn}

\begin{lem}
If $J\subseteq S^*$ is a minimal left $u$-ideal, then $J\subseteq \cal L(S^*\cdot \alpha^*)$ for every $\alpha \in J$.
\end{lem}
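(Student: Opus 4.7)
The plan is to reduce the statement directly to the corresponding fact about minimal left ideals of $\beta S$ via the projection $\pi$, using the two immediately preceding results.

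First, I would fix $\alpha \in J$ and observe that $\cU_\alpha = \pi(\alpha) \in \pi(J)$. By definition of a minimal left $u$-ideal, $\pi(J)$ is a minimal left ideal of $\beta S$. The stated Fact about minimal left ideals then yields $\pi(J) = \beta S \cdot \cU_\alpha$, since $\cU_\alpha \in \pi(J)$.

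Next, I would apply the preceding lemma, which identifies $\cal L(S^* \cdot \alpha^*) = \pi^{-1}(\beta S \cdot \cU_\alpha)$. Combining this with the previous step gives
\[
J \subseteq \pi^{-1}(\pi(J)) = \pi^{-1}(\beta S \cdot \cU_\alpha) = \cal L(S^*\cdot \alpha^*),
\]
which is exactly the desired inclusion.

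There is no real obstacle here: the entire content is packaged into the two preceding lemmas and the Fact about minimality. The only thing to verify carefully is that the tautological inclusion $J \subseteq \pi^{-1}(\pi(J))$ is genuinely all we need on the $S^*$ side, which it is, because $\cal L(S^*\cdot \alpha^*)$ is already full (closed under $\sim$), so pulling back along $\pi$ loses no information we would otherwise have to reconstruct by hand.
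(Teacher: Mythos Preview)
Your proof is correct and follows essentially the same approach as the paper: the paper's proof simply asserts that the condition ``$J\subseteq \cal L(S^*\cdot \alpha^*)$ for every $\alpha\in J$'' is equivalent to ``$\pi(J)=\beta S\cdot \cU_\alpha$ for every $\alpha\in J$,'' while you spell out this equivalence explicitly via $J\subseteq \pi^{-1}(\pi(J))=\pi^{-1}(\beta S\cdot \cU_\alpha)=\cal L(S^*\cdot \alpha^*)$.
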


\begin{proof}
The condition ``$J\subseteq \cal L(S^*\cdot \alpha^*)$ for every $\alpha\in J$'' is equivalent to the condition ``$\pi(J)=\beta S\cdot \cU_\alpha$ for every $\alpha\in J$.''
%
\end{proof}

Finally, we record the following consequence of the fact that every nonempty minimal left ideal of $\beta S$ is in particular a compact subsemigroup of $\beta S$:

\begin{lem}
Every nonempty minimal left u-ideal contains an idempotent element.
\end{lem}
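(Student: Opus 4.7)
The plan is to reduce the claim to the previously established Corollary that every nonempty closed u-subsemigroup of $S^*$ contains an idempotent element, and then to transport that idempotent into $J$ using the fact that being idempotent depends only on the $\sim$-class: if $\alpha \sim \alpha'$, then $\cU_{\alpha\cdot\alpha^*} = \cU_\alpha\cdot\cU_\alpha = \cU_{\alpha'}\cdot\cU_{\alpha'} = \cU_{\alpha'\cdot (\alpha')^*}$, so $\alpha$ is idempotent iff $\alpha'$ is.

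Let $J$ be a nonempty minimal left u-ideal, so by definition $\pi(J)$ is a minimal left ideal of $\beta S$. First I would replace $J$ by its full preimage $J' := \pi^{-1}(\pi(J)) = \{\gamma\in S^* \ : \ \cU_\gamma \in \pi(J)\}$. Since $\pi(J') = \pi(J)$ is a left ideal of $\beta S$, the earlier lemma characterizing left u-ideals gives that $J'$ is a full left u-ideal, and hence, by applying its defining condition with both arguments drawn from $J'$, also a u-subsemigroup. To see that $J'$ is closed in the u-topology on $S^*$, recall that this topology has basic clopen sets of the form $A^* = \pi^{-1}(\bar A)$, so $\pi$ is continuous; since minimal left ideals of $\beta S$ are closed (Facts 3.1.2), $J'=\pi^{-1}(\pi(J))$ is closed in $S^*$.

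Applying the Corollary to the nonempty closed u-subsemigroup $J'$ then yields an idempotent $\alpha\in J'$. Since $\cU_\alpha \in \pi(J') = \pi(J)$ and $\pi$ restricts to a surjection from $J$ onto $\pi(J)$, there exists $\beta\in J$ with $\beta\sim\alpha$, and by the opening observation $\beta$ is itself idempotent. The only mild subtlety worth flagging is that the definition of minimal left u-ideal does not require $J$ to be $\sim$-saturated, so the Corollary cannot be applied directly to $J$; routing through $J'$ and then picking a $\sim$-representative back inside $J$ is what handles this point.
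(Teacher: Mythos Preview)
Your proposal is correct and takes essentially the same approach as the paper, which simply records the lemma as a consequence of the fact that minimal left ideals of $\beta S$ are compact subsemigroups of $\beta S$ (hence contain idempotents by Ellis). You route through the Corollary on closed u-subsemigroups of $S^*$ rather than appealing directly to $\beta S$, and you make explicit the step of choosing a $\sim$-representative inside $J$; this is a fair point to flag, though the paper evidently regards it as implicit.
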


The analogous definitions and results for right $u$-ideals should be apparent to the reader.

Given all of the above preparation, we can now give the following useful characterization of minimal elements of $S^*$:

\begin{thm}\label{pwsminimal}
Given $\alpha\in S^*$, the following are equivalent:
\begin{enumerate}
\item $\alpha$ is minimal.
\item For all $A\in \cU_{\alpha}$, $A_\alpha$ is syndetic.
\item For all $\beta\in S^*$, there is $\gamma\in S^*$ such that $\alpha\sim \gamma\beta^*\alpha^{**}$.
\end{enumerate}
\end{thm}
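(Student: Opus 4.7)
The plan is to establish (1) $\Leftrightarrow$ (3) using the algebra of minimal left ideals in $\beta S$, and then to prove (2) $\Leftrightarrow$ (3) by unwinding the definitions with the aid of saturation.

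For (1) $\Leftrightarrow$ (3), the point is that the product formula from Section 2 iterates to give $\cU_\gamma \cdot \cU_\beta \cdot \cU_\alpha = \cU_{\gamma \beta^* \alpha^{**}}$, so (3) is just a nonstandard rendering of: for every $\cal V \in \beta S$, $\cU_\alpha \in \beta S \cdot \cal V \cdot \cU_\alpha$. If $\alpha$ is minimal then $\cU_\alpha$ lies in some minimal left ideal, which by the Fact above must equal $\beta S \cdot \cU_\alpha$; for any $\cal V$ the nonempty left ideal $\beta S \cdot \cal V \cdot \cU_\alpha$ is contained in, and therefore equal to, this minimal ideal, yielding (3). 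Conversely, assuming (3), any nonempty left ideal $L \subseteq \beta S \cdot \cU_\alpha$ contains some $p = \cal V \cdot \cU_\alpha$, and (3) supplies $\cal W$ with $\cU_\alpha = \cal W \cdot p \in L$; hence $L = \beta S \cdot \cU_\alpha$, so this left ideal is minimal and (1) follows.

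The substantive part is (2) $\Leftrightarrow$ (3), which rests on the transfer identity
$$\gamma \beta^* \alpha^{**} \in A^{***} \quad \Leftrightarrow \quad \gamma \in ((A_\alpha)_\beta)^*,$$
obtained by starring the definition of $A_\alpha$ twice while treating $\alpha,\beta$ as parameters. For (2) $\Rightarrow$ (3), fix $\beta \in S^*$; since $\{A_\alpha : A \in \cU_\alpha\}$ is closed under finite intersection, saturation reduces the task of finding a single $\gamma$ lying in every $((A_\alpha)_\beta)^*$ to showing each $(A_\alpha)_\beta$ is nonempty, and syndeticity of $A_\alpha$ via $S = G^{-1}A_\alpha$ yields $g \in G$ with $g\beta \in A_\alpha^*$, i.e., $g \in (A_\alpha)_\beta$. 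The resulting $\gamma$ gives $\gamma\beta^*\alpha^{**} \in A^{***}$ for every $A \in \cU_\alpha$, hence $\cU_\alpha \subseteq \cU_{\gamma\beta^*\alpha^{**}}$ and so $\alpha \sim \gamma\beta^*\alpha^{**}$ by ultrafilter maximality.

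For (3) $\Rightarrow$ (2), I would argue by contraposition: if some $A \in \cU_\alpha$ has $A_\alpha$ not syndetic, then the family $\{S \setminus G^{-1}A_\alpha : G \in \cal P_f(S)\}$ has the finite intersection property, so saturation produces $\beta \in S^*$ with $g\beta \notin A_\alpha^*$ for all $g \in S$, meaning $(A_\alpha)_\beta = \emptyset$. Applying (3) to this $\beta$ yields $\gamma$ satisfying $\gamma\beta^*\alpha^{**} \in A^{***}$, which via the key identity forces $\gamma \in \emptyset$, a contradiction. The main bit of care in this whole argument is keeping the levels of iterated starrings straight when deriving the key identity; once that bookkeeping is done, the rest reduces to the saturation arguments sketched above.
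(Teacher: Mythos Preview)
Your argument is correct, but the organization differs from the paper's. The paper proves the cycle $(1)\Rightarrow(2)\Rightarrow(3)\Rightarrow(1)$. Its $(2)\Rightarrow(3)$ is essentially your $(2)\Rightarrow(3)$ (saturation applied to the family $\{(A_\alpha)_\beta : A\in\cU_\alpha\}$), and its $(3)\Rightarrow(1)$ is the one-liner ``pick $\beta$ minimal; then $\gamma\beta^*\alpha^{**}$ is minimal, hence so is $\alpha$''---slicker than your argument that $\beta S\cdot\cU_\alpha$ is a minimal left ideal, though yours is fine too. The real divergence is in how $(1)\Rightarrow(2)$ is obtained: the paper proves it \emph{directly} inside $S^*$, arguing that for any $\beta$ in a minimal left $u$-ideal $L$ containing $\alpha$ one has $\alpha\sim\gamma\beta^*$ for some $\gamma$, hence $L\subseteq S^{-1}A^*$, and then that $S^*\subseteq S^{-1}A_\alpha^*$. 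The paper singles this out as a selling point of the nonstandard framework, remarking that the usual proof of $(1)\Rightarrow(2)$ invokes compactness of minimal left ideals, which the $u$-ideal argument appears to sidestep. You instead recover $(1)\Rightarrow(2)$ as the composite $(1)\Rightarrow(3)\Rightarrow(2)$, with the first step carried out entirely in $\beta S$ via principal left ideals---exactly the standard route the paper is trying to replace. So your proof is logically sound and arguably more symmetric, but it trades away the feature the paper most wants to showcase.
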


\begin{proof}
(1) implies (2):  Suppose that $\alpha$ is minimal and let $L$ be a minimal left $u$-ideal containing $\alpha$.  Fix $\beta\in L$.  Then $L\subseteq \cal L(S^*\cdot \beta^*)$ whence there is $\gamma\in S^*$ such that $\alpha\sim \gamma\cdot \beta^*$.  Thus, if $A\in \cU_\alpha$, then $\alpha\in A^*$ whence $\gamma\cdot \beta^*\in A^{**}$, and so, by transfer, there is $t\in S$ such that $t\cdot \beta\in A^*$.  Since $\beta\in L$ is arbitrary, it follows that $L\subseteq S^{-1}A^*$.  Thus, for any $\delta\in S^*$, we have that $\delta\alpha^*\in t^{-1}A^{**}$ for some $t\in S$, whence $t\delta\in A_\alpha^*$.  It follows that $S^*\subseteq S^{-1}A_\alpha^*$, so $A_\alpha$ is syndetic.    

(2) implies (3):  Fix $\beta\in S^*$ and $A\in \cU_\alpha$.  (2) implies that there is $t\in S$ such that $t\beta\in A_\alpha^*$, that is, $\alpha^*\in (t\beta)^{-1}A^{**}$.  By saturation, there is $\gamma\in S^*$ such that, for all $A\in \cU_\alpha$, $\alpha^{**}\in (\gamma\beta^*)^{-1}A^{***}$, that is, $\gamma\beta^*\alpha^{**}\in A^{***}$, whence $\cU_{\alpha}\subseteq \cU_{\gamma\beta^*\alpha^{**}}$ and hence $\cU_{\alpha}= \cU_{\gamma\beta^*\alpha^{**}}$, as desired.


(3) implies (1):  Fix minimal $\beta\in S^*$ and take $\gamma\in S^*$ such that $\alpha\sim \gamma\beta^*\alpha^{**}$.  Since the latter element is minimal, so is $\alpha$.
\end{proof}

\begin{remarks}

\

\begin{enumerate}
\item The proof of (1) implies (2) in the standard context uses the compactness of minimal left ideals.  The nonstandard approach seems to avoid this.
\item Item (2) above is similar to the property of idempotent ultrafilters in that sets in the ultrafilter have large ultrafilter shits, where large in the former means in the ultrafilter and in the latter means syndetic.
\end{enumerate}
\end{remarks}

\subsection{Piecewise syndetic sets and central sets}

The following result provides the crucial link between piecewise syndetic sets and minimal elements:

\begin{thm}
Suppose that $A\subseteq S$.  Then $A$ is piecewise syndetic if and only if there is a minimal $\alpha\in A^*$.
\end{thm}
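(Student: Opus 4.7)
The plan is to verify each direction using tools already developed in the paper. The reverse direction ($\Leftarrow$) follows directly from Theorem~\ref{pwsminimal}, while the forward direction ($\Rightarrow$) passes through the observation that $G^{-1}A$ is thick and extracts a minimal element using the left u-ideal $\cal L(S^* \cdot \alpha^*)$.

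For ($\Leftarrow$), given a minimal $\alpha \in A^*$, Theorem~\ref{pwsminimal}(1)$\Rightarrow$(2) applied to $A \in \cU_\alpha$ says $A_\alpha$ is syndetic. Choosing finite $G \subseteq S$ with $S = G^{-1}A_\alpha$ and unwinding the definition of $A_\alpha$, one finds that for every $s \in S$ there is $g \in G$ with $gs\alpha \in A^*$, i.e.\ $S\alpha \subseteq G^{-1}A^*$. By the nonstandard characterization of piecewise syndeticity, $A$ is piecewise syndetic.

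For ($\Rightarrow$), pick $\alpha \in S^*$ and finite $G \subseteq S$ with $S\alpha \subseteq G^{-1}A^* = (G^{-1}A)^*$; equivalently, $G^{-1}A$ is thick. Transferring this inclusion yields $S^* \cdot \alpha^* \subseteq (G^{-1}A)^{**}$, so $G^{-1}A \in \cU_{\gamma \cdot \alpha^*}$ for every $\gamma \in S^*$; consequently every element of the left u-ideal $\cal L(S^* \cdot \alpha^*)$ lies in $(G^{-1}A)^*$. Being a nonempty left u-ideal, it contains a minimal element $\beta \in (G^{-1}A)^* = \bigcup_{g \in G}(g^{-1}A)^*$, where the last equality uses the finiteness of $G$. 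Choose $g \in G$ with $g\beta \in A^*$. Since $g$ is standard, $g\beta \sim g \cdot \beta^*$, and the minimality-preservation lemma applied to $\beta$ shows that $g \cdot \beta^*$---and hence $g\beta$---is minimal, giving the desired minimal element of $A^*$.

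The only step that is not literally definitional is the inclusion $\cal L(S^* \cdot \alpha^*) \subseteq (G^{-1}A)^*$, but this is a one-line transfer once one recognizes that $S\alpha \subseteq (G^{-1}A)^*$ is an internal statement with parameter $\alpha$. No substantive obstacle appears: the result is essentially a dictionary entry translating between the combinatorial notion (piecewise syndeticity) and the ideal-theoretic notion (membership in $K(\beta S)$), with the thick-set $G^{-1}A$ serving as the bridge.
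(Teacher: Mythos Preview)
Your proof is correct and follows essentially the same route as the paper's: both directions match the paper's argument line for line, with the forward direction passing through $\cal L(S^*\cdot\alpha^*)\subseteq (G^{-1}A)^*$ via transfer, extracting a minimal $\beta$ from this left $u$-ideal, and then shifting by some $g\in G$ to land in $A^*$. Your additional justification that $g\beta\sim g\cdot\beta^*$ (since $g$ is standard) before invoking the minimality-preservation lemma is a welcome bit of detail that the paper leaves implicit.
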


\begin{proof}
First suppose that $A$ is piecewise syndetic.  Take finite $G\subseteq S$ and $\alpha\in A^*$ such that $S\cdot \alpha\subseteq G^{-1}A^*$.  By transfer, $S^*\cdot \alpha^*\subseteq G^{-1}A^{**}$.  Take minimal $\beta\in S^*$ such that $\beta\in \cal L(S^*\cdot \alpha^*)$, so $\beta\in G^{-1}A^{*}$.  Take $t\in G$ such that $t\beta\in A^*$; it remains to notice that $t\beta$ is minimal. 

Now suppose that $\alpha\in S^*$ is minimal and $\alpha\in A^*$.  By the previous theorem, $A_\alpha$ is syndetic, so there is finite $G\subseteq S$ such that $S=G^{-1}A_\alpha$.  It follows that $S\alpha\subseteq G^{-1}A^*$, whence $A$ is piecewise syndetic.   
\end{proof}

\begin{cor}
$\alpha\in S^*$ is in the closure of the minimal elements if and only if every $A\in \cU_\alpha$ is piecewise syndetic.
\end{cor}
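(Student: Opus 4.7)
The plan is to unwind the definition of the u-topology and then invoke the preceding theorem. Recall that the u-topology on $S^*$ has as basic clopen sets the sets of the form $A^*$ for $A\subseteq S$, and by definition $\alpha\in A^*$ precisely when $A\in \cU_\alpha$. Thus the basic neighborhoods of $\alpha$ are indexed exactly by the members of $\cU_\alpha$.

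By this description, $\alpha$ lies in the closure of the set of minimal elements if and only if every basic open neighborhood of $\alpha$ meets the set of minimal elements; equivalently, for every $A\in \cU_\alpha$ there exists a minimal $\beta\in A^*$. Now I would apply the theorem just proved, which gives exactly: there is a minimal $\beta\in A^*$ if and only if $A$ is piecewise syndetic. Combining these two equivalences yields the corollary.

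There is essentially no obstacle here beyond being careful to quote the correct topology; the result is a direct translation of the previous theorem into topological language on $S^*$. The one point worth noting, which I would mention explicitly in the write-up, is that the sets $A^*$ are clopen (not just open), so no extra argument is needed to pass between neighborhoods and basic open sets.
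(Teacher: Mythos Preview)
Your proof is correct and is exactly the intended argument: the paper states the corollary without proof, since it is immediate from the preceding theorem together with the definition of the u-topology. Your unpacking of the basic neighborhoods of $\alpha$ as the sets $A^*$ with $A\in\cU_\alpha$, followed by the direct application of the theorem, is precisely what is being left to the reader.
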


We now come to the central definition of this note:

\begin{defn}

\

\begin{enumerate}
\item $\alpha\in S^*$ is a \textbf{minimal idempotent} if it is both minimal and idempotent.
\item $A\subseteq S$ is \textbf{central} if there is a minimal idempotent $\alpha\in A^*$.
\end{enumerate}
\end{defn}

The following is immediate from the definition:

\begin{prop}
The notion of being central is partition regular.
\end{prop}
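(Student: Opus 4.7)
The plan is to exploit the fact that centrality of $A$ is witnessed by a single nonstandard object, a minimal idempotent $\alpha \in A^*$, together with the transfer principle applied to the partition $A = \bigsqcup_{i=1}^n A_i$.

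First I would fix a central set $A \subseteq S$ and a partition $A = \bigsqcup_{i=1}^n A_i$, and pick a minimal idempotent $\alpha \in S^*$ with $\alpha \in A^*$, which exists by the definition of central. Next I would invoke transfer on the statement that $A = \bigsqcup_{i=1}^n A_i$ to conclude $A^* = \bigsqcup_{i=1}^n A_i^*$. Since $\alpha \in A^*$, there is a unique $i \in \{1,\ldots,n\}$ with $\alpha \in A_i^*$. Because the properties ``minimal'' and ``idempotent'' of $\alpha$ are intrinsic features of $\alpha$ (not depending on which subset of $S$ we are viewing it inside), the \emph{same} $\alpha$ witnesses that $A_i$ is central.

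In short, the nonstandard perspective on centrality packages the existential quantifier (``there exists a minimal idempotent in $A^*$'') into a single element whose defining properties survive upon passing to any piece of a finite partition that contains it; transfer then does all the work. There is no obstacle to speak of here: the entire argument is a one-line application of transfer, which is precisely the reason this proposition is labelled ``immediate from the definition.'' The only thing worth noting is that the argument uses in an essential way that an ultrafilter (equivalently, the $\sim$-class of $\alpha$) picks out exactly one piece of any finite partition, which is automatic from $A^* = \bigsqcup_{i=1}^n A_i^*$.
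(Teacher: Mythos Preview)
Your argument is correct and is exactly the ``immediate from the definition'' proof the paper has in mind: pick a minimal idempotent $\alpha\in A^*$, use transfer to get $A^*=\bigsqcup_{i=1}^n A_i^*$, and note that the same $\alpha$ lands in some $A_i^*$ and witnesses its centrality. There is nothing to add.
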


By our earlier discussions, every central set is both piecewise syndetic and an FP-set.  By partition regularity of being central, the example from the introduction is a piecewise syndetic FP-set that is not central.

Although piecewise syndetic sets need not be central, we now show that every piecewise syndetic set has a shift that is central:

\begin{thm}
For $A\subseteq S$, the following are equivalent:
\begin{enumerate}
\item $A$ is piecewise syndetic.
\item $\{x\in S \ : \ x^{-1}A \text{ is central}\}$ is syndetic.
\item $\{x\in S \ : \ x^{-1}A \text{ is central}\}$ is nonempty.
\end{enumerate}
\end{thm}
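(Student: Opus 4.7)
My plan is to establish the cycle $(2)\Rightarrow(3)\Rightarrow(1)\Rightarrow(2)$. The first is immediate, since a syndetic subset of a nonempty semigroup is nonempty. For $(3)\Rightarrow(1)$, pick $x\in C$, so that $x^{-1}A$ is central and hence piecewise syndetic; the observation $G^{-1}(x^{-1}A)=(xG)^{-1}A$ lets me convert the piecewise syndetic witness for $x^{-1}A$ into one for $A$.

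The substance lies in $(1)\Rightarrow(2)$. Using piecewise syndeticity of $A$, the previous theorem furnishes a minimal $\alpha\in A^*$. Let $L$ be the minimal left $u$-ideal containing $\alpha$ and choose a minimal idempotent $e\in L$. Since $\pi(L)=\beta S\cdot\cU_e$, I may write $\cU_\alpha=\mathcal{V}\cdot\cU_e$ for some $\mathcal{V}\in\beta S$, and idempotence of $\cU_e$ yields the key identity $\cU_\alpha\cdot\cU_e=\cU_\alpha$, i.e., $\alpha\sim\alpha\cdot e^*$. Therefore $A\cdot\cU_e^{-1}\in\cU_\alpha$, so this set is nonempty; any $x$ in it satisfies $xe\in A^*$, whence $x^{-1}A\in\cU_e$ is central, already confirming (3).

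The pivotal step is to apply Theorem~\ref{pwsminimal}(2) to the minimal element $e$ and the set $x^{-1}A\in\cU_e$, yielding that $(x^{-1}A)_e$ is syndetic. Associativity gives $(x^{-1}A)_e=\{s\in S:(xs)e\in A^*\}$, and for each such $s$ the set $(xs)^{-1}A$ belongs to $\cU_e$ and is thus central, so $xs\in C$. Hence $(x^{-1}A)_e\subseteq x^{-1}C$, so $x^{-1}C$ is syndetic; a finite witness $G'$ for $x^{-1}C$ immediately gives $xG'$ as a finite witness for $C$.

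The step requiring the most care is the algebraic identity $\cU_\alpha\cdot\cU_e=\cU_\alpha$ inside the iterated hyperextension framework, together with the realization that Theorem~\ref{pwsminimal}(2) must be applied to the minimal idempotent $e$ (not to the original $\alpha\in A^*$) in order to expose a syndetic subset sitting inside $x^{-1}C$. Once these two facts are clearly in hand, the remainder is a direct unfolding of definitions.
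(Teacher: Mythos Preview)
Your proof is correct and follows essentially the same approach as the paper. For $(1)\Rightarrow(2)$ both arguments pick a minimal $\alpha\in A^*$, pass to a minimal idempotent $e$ (the paper's $\beta$) in the same minimal left $u$-ideal, establish the key identity $\alpha\cdot e^*\sim\alpha$ to find $x$ with $x^{-1}A\in\cU_e$, and then invoke Theorem~\ref{pwsminimal}(2) at $e$ to get syndeticity of $(x^{-1}A)_e=x^{-1}A_e$; the paper phrases the final syndeticity check by chasing an arbitrary $\delta\in S^*$, whereas you use the inclusion $(x^{-1}A)_e\subseteq x^{-1}C$ and a left-translation of the finite witness, but these are the same computation. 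Your $(3)\Rightarrow(1)$ is a minor variant: you argue combinatorially via $G^{-1}(x^{-1}A)=(xG)^{-1}A$, while the paper simply notes that $x\alpha\in A^*$ is minimal when $\alpha\in(x^{-1}A)^*$ is a minimal idempotent.
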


\begin{proof}
(1) implies (2):  Let $B:=\{x\in S \ : \ x^{-1}A \text{ is central}\}$.  Fix $\delta\in S^*$.  We need to find $t\in S$ such that $t\delta\in B^*$.  Take minimal $\alpha\in A^*$.  Let $L$ be a minimal left u-ideal with $\alpha\in L$.  Let $\beta\in L$ be idempotent.  Since $A_\beta\subseteq B$, it suffices to find $t\in S$ such that $t\delta\in A_\beta^*$.  Since $L\subseteq \cal L(S^*\cdot \beta^*)$,  there is $\gamma\in S^*$ such that $\alpha\sim \gamma\cdot \beta^*$, whence 
$$\alpha\cdot \beta^*\sim \gamma \cdot \beta^*\cdot \beta^{**}\sim \gamma\cdot \beta^*\sim \alpha.$$  It follows that $\alpha\cdot \beta^*\in A^{**}$, whence there is $s\in S$ such that $\beta\in s^{-1}A^*$.  Since $\beta$ is minimal, by Theorem \ref{pwsminimal}, we have that $(s^{-1}A)_\beta$ is syndetic, whence there is $u\in S$ such that $u\delta\in (s^{-1}A)_\beta^*$, that is $\beta^* \in (u\delta)^{-1}s^{-1}A^{**}=(su\delta)^{-1}A^{**}$, whence $su\delta\in A_\beta^*$.  Thus, setting $t:=su$, we have $t\delta\in A_\beta^*$, as desired.

(2) implies (3) is trivial.

(3) implies (1):  Take $x\in S$ such that $x^{-1}A$ is central and let $\alpha\in x^{-1}A^*$ be a minimal idempotent.  Then $x\alpha\in A^*$ is minimal, so $A$ is piecewise syndetic.
\end{proof}

We finish this subsection by showing that thick sets are central.  First:

\begin{thm}
$A\subseteq S$ is thick if and only if there is a left $u$-ideal $L\subseteq A^*$.
\end{thm}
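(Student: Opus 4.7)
The plan is to combine the nonstandard characterization of thickness established in the previous subsection---namely that $A$ is thick if and only if there exists $\alpha \in S^*$ with $S\alpha \subseteq A^*$---with the principal left $u$-ideal $\cal L(S^* \cdot \alpha^*)$ introduced earlier in this section.

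For the forward direction, I would start with $\alpha \in S^*$ satisfying $S\alpha \subseteq A^*$ and take $L := \cal L(S^* \cdot \alpha^*)$. This $L$ is already known to be a (full) left $u$-ideal, so the only thing left to verify is $L \subseteq A^*$. I would first apply transfer to the statement ``$\forall x \in S: x\alpha \in A^*$'' to obtain $S^* \alpha^* \subseteq A^{**}$. Then for any $\beta \in L$ we have $\beta \sim \gamma \cdot \alpha^*$ for some $\gamma \in S^*$; since $\gamma \alpha^* \in A^{**}$ gives $A \in \cU_{\gamma \alpha^*} = \cU_\beta$, we conclude $\beta \in A^*$, as desired.

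For the reverse direction, suppose $L \subseteq A^*$ is a left $u$-ideal and fix any $\beta \in L$. For each standard $s \in S$, the defining property of a left $u$-ideal supplies $\gamma_s \in L$ with $s \cdot \beta^* \sim \gamma_s$. Since $\gamma_s \in A^*$, we have $A \in \cU_{\gamma_s} = \cU_{s \beta^*}$, whence $s \beta^* \in A^{**}$ for every $s \in S$. Given any finite $F = \{s_1, \ldots, s_n\} \subseteq S$, the element $\beta^* \in S^{**}$ thus witnesses the existential statement $\exists x \in S^{**} : \bigwedge_{i=1}^n s_i x \in A^{**}$; transfer then produces a standard $t \in S$ with $Ft \subseteq A$, showing that $A$ is thick.

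Neither direction presents a real obstacle---the argument is essentially a matter of unwinding the definitions of $\sim$ and of a left $u$-ideal in tandem with the nonstandard description of thickness. The only place that requires care is the bookkeeping of iteration levels: $\beta$ lies in $S^*$, but $\beta^*$ and $s \cdot \beta^*$ live in $S^{**}$, so the final transfer producing a standard witness $t \in S$ must descend through both levels of the tower of iterated hyperextensions.
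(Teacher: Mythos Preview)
Your forward direction matches the paper's exactly: take $L=\cal L(S^*\cdot\alpha^*)$, transfer $S\alpha\subseteq A^*$ up to $S^*\alpha^*\subseteq A^{**}$, and conclude $L\subseteq A^*$.

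Your reverse direction is correct but takes a detour through $S^{**}$ that the paper avoids. The paper simply fixes $\alpha\in L$ and observes $S\alpha\subseteq A^*$, which is precisely the nonstandard characterization of thickness from the earlier lemma. The point you are implicitly missing is that for \emph{standard} $s\in S$ one has $s\cdot\beta^*\sim s\cdot\beta$ (since $\cU_s$ is principal, $A\in\cU_s\cdot\cU_\beta$ iff $s\beta\in A^*$). Hence from $s\cdot\beta^*\sim\gamma_s\in L\subseteq A^*$ you get $s\beta\in A^*$ immediately, and $\beta$ itself witnesses thickness---no need to pass to $S^{**}$ and transfer back down through two levels. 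Your argument is valid, just longer; the paper's version buys you a one-line proof of the converse.
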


\begin{proof}
First suppose that $A$ is thick, so $S\alpha\subseteq A^*$ for some $\alpha\in S^*$.  It follows that $S^*\alpha^*\subseteq A^{**}$, whence $\cal L(S^*\alpha^*)$ is a left $u$-ideal contained in $A^*$.

Conversely, suppose that $L\subseteq A^*$ is a left u-ideal.  Fix $\alpha\in L$.  Then $S\alpha\subseteq L\subseteq A^*$, as desired.
\end{proof}

Since every left $u$-ideal contains a minimal left $u$-ideal, we have:
\begin{cor}
Thick sets are central.
\end{cor}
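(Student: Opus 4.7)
My plan is to chain together three facts that have already been established: the characterization of thick sets as those $A$ for which $A^*$ contains a left $u$-ideal (the preceding theorem), the fact (noted right above the corollary) that every left $u$-ideal contains a minimal left $u$-ideal, and the lemma stating that every nonempty minimal left $u$-ideal contains an idempotent. Produce from these a minimal idempotent $\alpha \in A^*$, and centrality of $A$ follows by definition.

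Concretely, given $A$ thick, I first invoke the preceding theorem to obtain a left $u$-ideal $L \subseteq A^*$. Then I want a minimal left $u$-ideal sitting inside $A^*$. The quickest route is to observe that $A^*$ itself is full (if $\beta \sim \alpha$ and $\alpha \in A^*$, then $\beta \in A^*$ by definition of $\sim$), so if $I \subseteq \pi(L)$ is a minimal left ideal of $\beta S$ (which exists by Facts (1) applied to the left ideal $\pi(L)$), then
$$\pi^{-1}(I) \subseteq \pi^{-1}(\pi(L)) \subseteq A^*,$$
and $\pi^{-1}(I)$ is a minimal left $u$-ideal by definition. Next, I apply the lemma that every nonempty minimal left $u$-ideal contains an idempotent to produce $\alpha \in \pi^{-1}(I)$ with $\alpha \cdot \alpha^* \sim \alpha$. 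Since $\cU_\alpha \in I \subseteq K(\beta S)$, this $\alpha$ is also minimal, so $\alpha$ is a minimal idempotent lying in $A^*$, which witnesses that $A$ is central.

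The only subtlety to watch is making sure that the minimal left $u$-ideal one extracts actually lies inside $A^*$ rather than merely inside some larger full $u$-set; the fullness of $A^*$ under $\sim$ handles this cleanly and is really the only nontrivial bookkeeping step. Everything else is an immediate invocation of the preparatory lemmas, so the corollary amounts essentially to stringing them together in the right order.
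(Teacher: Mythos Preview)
Your proposal is correct and follows essentially the same approach as the paper: the paper's proof is the single sentence preceding the corollary (``Since every left $u$-ideal contains a minimal left $u$-ideal, we have:''), and you have simply unpacked that sentence carefully, making explicit the passage through $\beta S$ and the fullness of $A^*$ under $\sim$ to ensure the minimal left $u$-ideal stays inside $A^*$. Your extra bookkeeping is sound but not strictly necessary, since the paper takes for granted that the minimal left $u$-ideal sits inside the original one and hence inside $A^*$.
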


\subsection{Addition and multiplication}

In this subsection, we work with the semigroups $(\bb N,+)$ and $(\bb N,\cdot)$.  We then use the adjectives ``additive'' and ``multiplicative'' to make it clear which semigroup we are speaking about.

We consider the following two sets (using the same notation as found in \cite{HS}):

\begin{defn}

\

\begin{enumerate}
\item $\Gamma$ is the closure of the set of additively idempotent elements of $\bb N^*$.
\item $\bb M$ is the closure of the set of additively minimal idempotent elements of $\bb N^*$.
\end{enumerate}
\end{defn}

Thus, $\alpha \in \Gamma$ (resp. $\alpha\in \bb M$) if and only if, whenever $\alpha\in A^*$, then $A$ is an FS-set (resp. $A$ is an additively central set).

We use the notation $A/n:=\{m\in \bb N \ : \ mn\in A\}$.

\begin{prop}
$\Gamma$ and $\bb M$ are multiplicative left u-ideals of $\bb N^*$.
\end{prop}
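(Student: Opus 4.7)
The plan is to use the combinatorial characterizations spelled out just before the proposition: $\beta\in\Gamma$ iff every $A\in\cU_\beta$ is an FS-set, and $\beta\in\bb M$ iff every $A\in\cU_\beta$ is additively central. Both sets are easily seen to be $\sim$-saturated (being defined by properties of $\cU_\alpha$ alone), so verifying that $\Gamma$ and $\bb M$ are multiplicative left $u$-ideals reduces to showing: for any $\alpha\in\bb N^*$ and any $\beta$ in $\Gamma$ (respectively $\bb M$), every $A\in\cU_{\alpha\cdot\beta^*}$ is an FS-set (respectively additively central); a witness $\gamma\in\bb N^*$ with $\gamma\sim\alpha\cdot\beta^*$ is then produced by saturation.

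The common starting point is the formula $\cU_{\alpha\cdot\beta^*}=\cU_\alpha\cdot\cU_\beta$ from Section 2.1: if $A\in\cU_{\alpha\cdot\beta^*}$ then $\{n\in\bb N:A/n\in\cU_\beta\}\in\cU_\alpha$, and in particular some $n\in\bb N$ satisfies $A/n\in\cU_\beta$. For $\Gamma$, the proof is now immediate: if $A/n$ is an FS-set witnessed by an infinite sequence $\langle x_i\rangle$, then $\langle nx_i\rangle$ witnesses that $A$ is an FS-set, since $n(x_{j_1}+\cdots+x_{j_k})=nx_{j_1}+\cdots+nx_{j_k}\in A$ for all increasing tuples $j_1<\cdots<j_k$.

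For $\bb M$ the task becomes: show $A$ is additively central whenever $A/n$ is. Fixing an additively minimal idempotent $\delta\in(A/n)^*$, the natural candidate witness in $A^*$ is $n\delta$. Additive idempotency of $n\delta$ follows from the elementary distributivity $n(\delta+\delta^*)=n\delta+n\delta^*$ (transferred from $\bb N$), combined with $\delta+\delta^*\sim\delta$ and the observation that multiplication by a standard integer preserves $\sim$-equivalence. The substantive step is additive minimality of $n\delta$, which I would verify via the syndeticity criterion of Theorem \ref{pwsminimal}(2): for each $B\in\cU_{n\delta}$, one has $B/n\in\cU_\delta$, so $(B/n)_\delta$ is syndetic by minimality of $\delta$, and the identity $n(t+\delta)=nt+n\delta$ gives $n(B/n)_\delta\subseteq B_{n\delta}$; promoting syndeticity from this subset of $n\bb N$ to all of $\bb N$ is then accomplished by enlarging the finite translate set with representatives of each residue class mod $n$. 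I expect the main obstacle to be precisely this final upgrade: while the idea is transparent, the bookkeeping around residues mod $n$ (especially keeping elements in $\bb N=\{1,2,\ldots\}$ when shifting) is the only real technical point in the argument.
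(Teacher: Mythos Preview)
Your argument is correct and follows the same outline as the paper's: reduce to finding $n$ with $A/n\in\cU_\beta$, then dilate by $n$. For $\Gamma$ you dilate an FS-witness sequence directly, while the paper dilates an additive idempotent $\gamma\in(A/n)^*$ and checks that $n\gamma$ is again idempotent via $n\gamma+(n\gamma)^*=n(\gamma+\gamma^*)\sim n\gamma$; these are interchangeable, the paper's version just being more parallel to the $\bb M$ case.

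For $\bb M$ the paper does exactly what you do up to the point of needing $n\delta$ minimal, but then simply writes ``It just remains to observe that $n\gamma$ is also minimal'' without further justification. Your verification via Theorem~\ref{pwsminimal}(2)---showing $n\cdot(B/n)_\delta\subseteq B_{n\delta}$ and then that dilating a syndetic set by $n$ preserves syndeticity---is a correct and self-contained way to fill this in using only what the paper has developed. (An alternative the paper presumably has in mind is the general fact that the continuous additive endomorphism $m\mapsto nm$ of $\beta\bb N$ carries $K(\beta\bb N,+)$ into itself, which follows from \cite[Theorem 1.65]{HS} once one notes $\overline{n\bb N}\cap K(\beta\bb N,+)\neq\emptyset$; your route avoids importing that machinery.) The residue-class bookkeeping you flag is indeed routine: if $G$ witnesses syndeticity of $C$, then $H=\{ng+r:g\in G,\ 1\le r\le n\}$ witnesses syndeticity of $nC$, hence of $B_{n\delta}$.
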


\begin{proof}
Take $\alpha\in \bb N^*$, $\beta \in \Gamma$, and suppose that $\alpha\cdot \beta^*\in A^{**}$; we need to show that $A$ is an FS-set.  Take $n\in \bb N$ such that $n\cdot \beta \in A^*$, so $\beta\in A^*/n$.  It follows that $A/n$ is an FS-set, whence so is $A$.  Indeed, take an additively idempotent $\gamma\in A/n$; it suffices to see that $n\gamma$ is also additively idempotent, which follows from the calculation
$$n\gamma+(n\gamma)^*=n\gamma+n\gamma^*=n(\gamma+\gamma^*)\sim n\gamma.$$

If $\beta \in \bb M$, then $A/n$ in the above paragraph is additively central, whence $\gamma$ can be chosen to be a minimal idempotent element.  It just remains to observe that $n\gamma$ is also minimal, whence $A$ is additively central.
\end{proof}

\begin{cor}
For any partition $\bb N:=\bigsqcup_{i=1}^n C_i$, there is some $i$ such that $C_i$ is both additively and multiplicatively central.
\end{cor}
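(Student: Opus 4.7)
The plan is to produce a single element $\alpha\in\bb N^*$ that simultaneously lies in $\bb M$ and is a multiplicative minimal idempotent. Once such an $\alpha$ has been found, pick the unique index $i$ with $\alpha\in C_i^*$: then $C_i$ is multiplicatively central by definition (the multiplicative minimal idempotent $\alpha$ witnesses it), and $C_i$ is additively central by the characterization of $\bb M$ recorded just after its definition (namely, $\alpha\in\bb M$ forces every $A\subseteq\bb N$ with $\alpha\in A^*$ to be additively central).

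To produce such an $\alpha$, I would invoke the preceding proposition, which asserts that $\bb M$ is a multiplicative left $u$-ideal of $\bb N^*$. Hence $\pi(\bb M)$ is a left ideal of $(\beta\bb N,\cdot)$, and by the first fact about minimal ideals collected in Section~3.1, it contains a multiplicative minimal left ideal $L$. Because the u-topology is generated by sets of the form $A^*$, which depend only on the underlying ultrafilter $\cU_\alpha$, membership in $\bb M$ is $\sim$-invariant; in particular $\pi^{-1}(L)\subseteq\bb M$. The set $\pi^{-1}(L)$ is then a multiplicative minimal left $u$-ideal of $\bb N^*$, and by the earlier lemma guaranteeing an idempotent inside any nonempty minimal left $u$-ideal, it contains a multiplicative idempotent $\alpha$. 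Since $\cU_\alpha\in L\subseteq K(\beta\bb N,\cdot)$, this $\alpha$ is multiplicatively minimal as well, and it lies in $\bb M$ by construction.

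There is no real obstacle here: the corollary is essentially bookkeeping, combining the multiplicative left $u$-ideal structure of $\bb M$ supplied by the preceding proposition with the standard principle that every left $u$-ideal houses a minimal left $u$-ideal, which in turn houses an idempotent. The only subtlety worth checking is that the minimal left $u$-ideal we extract genuinely lies inside $\bb M$, and this is immediate from the $\sim$-invariance of membership in $\bb M$.
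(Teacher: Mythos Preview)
Your argument is correct and follows essentially the same route as the paper's own proof: use the preceding proposition to place a multiplicative minimal left $u$-ideal inside $\bb M$, extract a multiplicative idempotent from it, and then read off both flavors of centrality for the piece $C_i$ containing that element. Your version is in fact slightly more careful than the paper's, which simply writes ``Take $\alpha\in L$. Then $\alpha$ is a multiplicatively minimal idempotent'' without explicitly invoking the lemma that a minimal left $u$-ideal contains an idempotent; you also make explicit the $\sim$-invariance of $\bb M$ needed to ensure the minimal left $u$-ideal you find actually sits inside $\bb M$.
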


\begin{proof}
Let $L\subseteq \bb M$ be a multiplicatively minimal left u-ideal.  Take $\alpha\in L$.  Then $\alpha$ is a multiplicatively minimal idempotent.  Take $i$ such that $\alpha \in C_i^*$.  Then $C_i$ is multiplicatively central.  Since $\alpha\in \bb M$, we have that $C_i$ is also additively central.
\end{proof}

Here is a nice combinatorial application of the preceding ideas.  (Something more general appears in \cite[Chapter 10]{book}):

\begin{thm}[Bergelson \cite{Bergelson}]
The equation $x+y=w\cdot z$ is injectively partition regular:  for any partition $\bb N:=\bigsqcup_{i=1}^n C_i$, there is $i$ and distinct $a,b,c,d\in C_i$ such that $a+b=c\cdot d$.  Moreover, $a,b,c,d$ can be chosen arbitrarily large.
\end{thm}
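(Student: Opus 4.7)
The plan is to invoke the preceding corollary to fix a color class $C := C_i$ and a multiplicatively minimal idempotent $\alpha \in C^* \cap \bb M$. Multiplicative idempotency gives $C_\alpha \in \cU_\alpha$ (with $C_\alpha = \{s \in \bb N : s\cdot \alpha \in C^*\}$), so $D := C \cap C_\alpha \in \cU_\alpha$; and since $\alpha \in \bb M$, every set in $\cU_\alpha$ is additively central, in particular $D$ is additively central. The membership $\alpha \in \bb M$ is precisely the bridge that will let me transfer additive structure onto multiplicatively-defined sets.

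Next I fix $c \in D$ (to be chosen large). Because $c \in C_\alpha$ means $c\cdot \alpha \in C^*$, the shift $c^{-1}C := \{m \in \bb N : cm \in C\}$ also belongs to $\cU_\alpha$, so $D_c := D \cap c^{-1} C \in \cU_\alpha$ and hence is additively central. Pick an additively minimal idempotent $\gamma \in D_c^*$. Additive idempotency gives $\gamma + \gamma^* \sim \gamma \in D_c^*$, so $\gamma + \gamma^* \in D_c^{**}$; two successive transfers then extract standard $a, b \in D_c$, which may be taken distinct and arbitrarily large with $b$ much larger than $a$, such that $a + b \in D_c$ as well.

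Now set $a' := ca$, $b' := cb$, $c' := c$, $d' := a + b$. Since $a, b, a+b$ all lie in $D_c \subseteq c^{-1}C \cap C$, each of $a', b', c', d'$ lies in $C = C_i$, and
\[
a' + b' \;=\; c(a+b) \;=\; c' \cdot d'.
\]
Choosing $c \geq 2$ and then $a, b$ both much larger than $c$ (with $b$ larger than $ca$) makes the four quantities pairwise distinct, and all are as large as we wish by the freedom in the successive choices of $c$, $a$, and $b$.

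The genuinely interesting step is the second one: recognizing that once $\alpha$ is chosen to be both multiplicatively minimal idempotent and in $\bb M$, the multiplicatively defined set $D_c$ inherits additive centrality, which is exactly what couples the additive and multiplicative sides of the equation $a + b = c\cdot d$ within a single color class. The remaining verifications — membership, the computation $a' + b' = c' d'$, and distinctness for generic large choices — are routine.
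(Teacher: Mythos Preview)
Your proof is correct and rests on the same pivotal idea as the paper's: pick a multiplicatively minimal idempotent $\alpha\in\bb M$, so that every multiplicatively defined member of $\cU_\alpha$ is automatically additively central. Where you diverge is in the final construction of the quadruple. The paper fixes $d\in C$ with $d\alpha\in C^*$, takes an additive idempotent $\beta\in (C\cap C/d)^*$, produces $b'\in C\cap C/d$ with $b'+\beta\in (C\cap C/d)^*$, sets $b=b'd$, and then separately transfers from $d\beta\in C^*\cap(dC^*-b)$ to get $a\in C$ with $a+b\in dC$; the last factor $c=(a+b)/d$ falls out. You instead fix $c$, pass to $D_c=C\cap C_\alpha\cap c^{-1}C$, and extract $a,b,a+b$ all from $D_c$ in one stroke via the additive idempotent $\gamma$, then set $(a',b',c',d')=(ca,cb,c,a+b)$. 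Your route is more symmetric (both summands are visibly multiples of $c$, so the factorization $ca+cb=c(a+b)$ is immediate) and shaves off one transfer step; the paper's route has the minor feature that one summand $a$ is found directly in $C$ rather than as a multiple, but this buys nothing here. Note also that you only use additive idempotency of $\gamma$, never its minimality, so $\alpha\in\Gamma$ would already suffice for your argument.
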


\begin{proof}
Choose $\alpha$ as in the previous corollary, so a multiplicatively minimal idempotent contained in $\bb M$.  Take $i$ such that $\alpha\in C_i^*$.  This $i$ will be as desired.  For notational convenience, set $C:=C_i$.  Since $\alpha\cdot \alpha^*\in C^{**}$, there are arbitrarily large $d\in C$ such that $d\cdot \alpha \in C^*$.  Since $\alpha\in C^*\cap C^*/d$, we have that $C\cap C/d$ is additively central.  Take additively minimal idempotent $\beta \in C^*\cap C^*/d$.  We then have that $\beta+\beta^*\in C^{**}\cap C^{**}/d$.  There are then arbitrarily large $b'\in C\cap C/d$ such that $b'+\beta\in C^*\cap C^*/d$.  Set $b:=b'd$, noting that $b\in C$ and $b\not=d$.  Now note that $d\beta\in C^*\cap (dC^*-b)$, so there are arbitrarily large $a\in C$ such that $a+b\in dC$ (so we can assume that $a\not=b,d$).  Take $c\in C$ such that $a+b=cd$.  Choosing $a$ arbitrarily large forces $c$ arbitrarily large and distinct from $a,b,d$.
\end{proof}

Multiplicatively central sets need not be FS-sets (see \cite[Thm 16.29]{HS}) but we do have the next best thing, a result due to Bergelson and Hindman (see \cite[Thm 3.5]{BH}):

\begin{thm}\label{Berg}
Every multiplicatively piecewise syndetic set is an $\operatorname{FS}_{<\omega}$-set.
\end{thm}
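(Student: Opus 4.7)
The plan is to pair the multiplicative piecewise syndeticity of $A$ with an additive idempotent in $\bb N^*$ through the iterated-hyperextension framework, and then use distributivity to convert the additive FS-structure carried by the idempotent into finite FS-sets inside $A$. I do not expect $A$ itself to contain any single infinite FS-set (the paper has just noted that multiplicatively central, let alone multiplicatively piecewise syndetic, sets need not be additive FS-sets), so the strategy produces, for each $n$ separately, a finite witnessing sequence.

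First, the nonstandard characterization of multiplicative piecewise syndeticity gives $\alpha\in\bb N^*$ and a finite $G\subseteq\bb N$ with $\bb N\cdot\alpha\subseteq G^{-1}A^*$, and then by transfer $\bb N^*\cdot\alpha^*\subseteq G^{-1}A^{**}$. Next, since $\bb N^*$ is a closed additive $u$-subsemigroup of itself, an earlier corollary (existence of idempotents in closed $u$-subsemigroups) yields an additive idempotent $\beta\in\bb N^*$. Then $\beta\cdot\alpha^*\in G^{-1}A^{**}$, so some $g\in G$ satisfies $g\beta\alpha^*\in A^{**}$. Setting $B:=g^{-1}A$ and $B_\alpha:=\{x\in\bb N:x\cdot\alpha\in B^*\}$ (multiplicative senses throughout), transfer of the definition of $B_\alpha$ gives $(B_\alpha)^*=\{y\in\bb N^*:y\cdot\alpha^*\in B^{**}\}$, so the assertion $g\beta\alpha^*\in A^{**}$ unwraps exactly to $\beta\in(B_\alpha)^*$.

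Since $\beta\in(B_\alpha)^*$ and $\beta$ is additively idempotent, Proposition \ref{idemFP} (applied additively) yields a strictly increasing sequence $x_1<x_2<\cdots$ in $\bb N$ with $\operatorname{FS}(\langle x_k\rangle_{k\geq 1})\subseteq B_\alpha$, i.e., $g\cdot x\cdot\alpha\in A^*$ for every $x\in\operatorname{FS}(\langle x_k\rangle)$. For each fixed $n$, setting $F:=\operatorname{FS}(\langle x_k\rangle_{k=1}^n)$ gives $\alpha\in\bigcap_{x\in F}((gx)^{-1}A)^*=\bigl(\bigcap_{x\in F}(gx)^{-1}A\bigr)^*$, and transfer then provides some $y\in\bb N$ with $gxy\in A$ for every $x\in F$. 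Distributivity finally yields $\operatorname{FS}(\langle gyx_k\rangle_{k=1}^n)=gy\cdot\operatorname{FS}(\langle x_k\rangle_{k=1}^n)\subseteq A$, the desired FS-set of length $n$. The only real care is in the hyperextension-level bookkeeping when passing from $g\beta\alpha^*\in A^{**}$ to $\beta\in(B_\alpha)^*$; once that is managed, the additive idempotent $\beta$ and the multiplicative cover $G^{-1}A^*$ do all the work, with no separate appeal to a finitary version of Hindman's theorem.
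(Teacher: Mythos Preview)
Your argument is correct. The bookkeeping you flagged is fine: from $\bb N\cdot\alpha\subseteq G^{-1}A^*$ you get by transfer $\bb N^*\cdot\alpha^*\subseteq G^{-1}A^{**}$, and since $G$ is finite, $\beta\cdot\alpha^*\in G^{-1}A^{**}$ does yield a single $g\in G$ with $g\beta\alpha^*\in A^{**}$; the passage to $\beta\in(B_\alpha)^*$ is exactly the transfer of the defining formula for $B_\alpha$. The distributivity step $gy\cdot\operatorname{FS}(\langle x_k\rangle_{k=1}^n)=\operatorname{FS}(\langle gyx_k\rangle_{k=1}^n)$ is immediate, and the $x_k$ being strictly increasing ensures the $gyx_k$ are $n$ distinct elements.

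Your route differs from the paper's. The paper first isolates the set $\bb F:=\{\alpha\in\bb N^*:\text{every }A\in\cU_\alpha\text{ is FS}_{<\omega}\}$ and proves, via essentially the same distributivity calculation you carry out, that $\bb F$ is a nonempty closed \emph{two-sided} multiplicative $u$-ideal. It then invokes the structure theory of $K(\beta\bb N)$: since $A$ is multiplicatively piecewise syndetic, $A^*$ contains a multiplicatively minimal element, and every minimal element lies in $\bb F$ because a nonempty ideal contains $K(\beta\bb N)$. You bypass minimal elements and the containment $K(\beta\bb N)\subseteq\pi(\bb F)$ entirely, working instead directly from the nonstandard witness $\alpha$ to piecewise syndeticity and a single additive idempotent $\beta$. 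Your approach is thus a bit more elementary and self-contained; the paper's approach packages the computation into a reusable lemma about $\bb F$ that has independent interest (e.g.\ it immediately gives that $\bb F$ contains all of the multiplicative smallest ideal, not just the particular product you need).
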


Here, $A\subseteq \bb N$ is an FS$_{<\omega}$-set if there are arbitrarily large finite sets $X$ such that FS$(X)\subseteq A$.  First:

\begin{lem}
Let $\bb F:=\{\alpha \in S^* \ : \ \text{ every }A\in \cU_\alpha \text{ is an }\operatorname{FS}_{<\omega}\text{-set}\}$.  Then $\bb F$ is a nonempty closed multiplicative u-ideal of $S^*$.
\end{lem}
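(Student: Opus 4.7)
The plan is to verify the three properties (nonempty, u-closed, multiplicative left u-ideal) separately, following essentially the pattern of the analogous result for $\Gamma$ and $\mathbb{M}$ established just above.

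For nonemptiness, I would simply observe that any additively idempotent $\alpha \in \mathbb{N}^*$ satisfies $\alpha \in \mathbb{F}$: by Proposition \ref{idemFP} every $A \in \mathcal{U}_\alpha$ is already an FS-set (in fact with a witnessing infinite sequence), which is strictly stronger than being an FS$_{<\omega}$-set. Since additive idempotents exist (by the corollary to Fact on compact u-subsemigroups, applied to $\mathbb{N}^*$ itself), we have $\Gamma \subseteq \mathbb{F}$ and so $\mathbb{F}$ is nonempty.

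For closedness in the u-topology, I would show that the complement is open. If $\alpha \notin \mathbb{F}$, fix $A \in \mathcal{U}_\alpha$ that is not an FS$_{<\omega}$-set. Then $A^*$ is a basic open neighborhood of $\alpha$, and for any $\beta \in A^*$ we have $A \in \mathcal{U}_\beta$, so $\beta \notin \mathbb{F}$. Hence $A^* \subseteq S^* \setminus \mathbb{F}$, witnessing openness of the complement.

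The main content is showing $\mathbb{F}$ is a multiplicative left u-ideal. Fix $\alpha \in \mathbb{N}^*$ and $\beta \in \mathbb{F}$, and suppose $\alpha \cdot \beta^* \in A^{**}$; I need to show $A$ is an FS$_{<\omega}$-set. As in the argument for $\Gamma$ just above, the fact $\alpha \cdot \beta^* \in A^{**}$ means $\{n \in \mathbb{N} : n\beta \in A^*\} \in \mathcal{U}_\alpha$, so in particular this set is nonempty; pick some $n$ in it, so that $\beta \in (A/n)^*$, i.e., $A/n \in \mathcal{U}_\beta$. Since $\beta \in \mathbb{F}$, $A/n$ is an FS$_{<\omega}$-set. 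The key observation is then that multiplying witnesses by $n$ transfers the FS$_{<\omega}$ property back to $A$: given arbitrarily large finite $X \subseteq \mathbb{N}$ with $\mathrm{FS}(X) \subseteq A/n$, the set $nX := \{nx : x \in X\}$ has the same cardinality as $X$ (multiplication by $n$ is injective) and satisfies $\mathrm{FS}(nX) = n \cdot \mathrm{FS}(X) \subseteq n \cdot (A/n) \subseteq A$. Thus $A$ is an FS$_{<\omega}$-set, completing the proof that $\alpha \cdot \beta^* \in \mathbb{F}$.

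The only mildly subtle step is the last calculation linking FS$_{<\omega}$-ness of $A/n$ to that of $A$, but it is entirely elementary; no genuine obstacle arises since all of the heavy lifting (extracting $n$ from the product $\alpha \cdot \beta^*$, the u-topology description, and the existence of additive idempotents) is already in place from the preceding material.
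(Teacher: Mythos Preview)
Your argument for nonemptiness and closedness is fine and matches the paper. The gap is that the lemma asserts $\mathbb{F}$ is a \emph{two-sided} multiplicative u-ideal, whereas you only verify the left u-ideal property. With your naming conventions ($\alpha$ arbitrary, $\beta\in\mathbb{F}$), you show $\alpha\cdot\beta^*\in\mathbb{F}$ via the $A/n$ trick; this is exactly the paper's left-ideal argument. But you never show $\beta\cdot\alpha^*\in\mathbb{F}$, and the earlier results for $\Gamma$ and $\mathbb{M}$ you are pattern-matching on were only \emph{left} u-ideals, so the analogy does not carry the missing direction for free.

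The right-ideal direction needs a genuinely different (though still short) idea. If $\beta\in\mathbb{F}$, $\alpha\in S^*$ arbitrary, and $\beta\cdot\alpha^*\in A^{**}$, then $\beta\in A_\alpha^*$, so $A_\alpha$ is an FS$_{<\omega}$-set. Given $k$, pick $x_1,\ldots,x_k$ with $\mathrm{FS}(\{x_1,\ldots,x_k\})\subseteq A_\alpha$; since this is a finite subset of $A_\alpha$, transfer gives a single $y\in\mathbb{N}$ with $\mathrm{FS}(\{x_1,\ldots,x_k\})\cdot y\subseteq A$, i.e.\ $\mathrm{FS}(\{x_1y,\ldots,x_ky\})\subseteq A$. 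This omission matters for the intended application to Theorem~\ref{Berg}: one needs every multiplicatively minimal element to lie in $\mathbb{F}$, which follows because $K(\beta S)$ is the smallest \emph{two-sided} ideal and hence sits inside $\pi(\mathbb{F})$. A mere left u-ideal need not contain all of $K(\beta S)$.
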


\begin{proof}
$\bb F$ is clearly closed and is nonempty as it contains all additive idempotents.  Suppose that $\alpha \in \bb F$ and $\beta \in S^*$.

First suppose that $\alpha\cdot \beta^*\in A^{**}$ and fix $k\in \bb N$.  Since $\alpha\in A_\beta^*$, we see that $A_\beta$ is an FS$_{<\omega}$-set, say FS$(\{x_1,\ldots,x_k\})\subseteq A_\beta$.  This is a finitary statement, whence there is $y\in \bb N$ such that FS$(\{x_1,\ldots,x_k\})\cdot y\subseteq A$, that is, FS$(\{x_1y,\ldots,x_ky\})\subseteq A$.

Now suppose that $\beta\cdot \alpha^*\in A^{**}$.  Then there is $n\in \bb N$ such that $n\alpha\in A^*$, so $A/n$ is FS$_{<\omega}$, whence so is $A$.
\end{proof}

\begin{proof}[Proof of Theorem \ref{Berg}]
Suppose that $A\subseteq \bb N$ is multiplicatively piecewise syndetic.  Let $\alpha \in A^*$ be a multiplicatively minimal idempotent element.  By the previous lemma, $\alpha \in \bb F$, whence $A$ is FS$_{<\omega}$.
\end{proof}

\section{Combinatorial descriptions of central sets}

In this section, we give a description of central sets that is purely combinatorial.  We split this task up into two parts.

\subsection{Part 1:  FP-trees}

\emph{In this section (and this section only), we follow usual set-theoretic convention and view $n\in \bb N$ as the ordinal $n=\{0,1,\ldots,n-1\}$.}
\begin{defn}
If $X$ is a set, a \textbf{tree in $X$} is a set $T\subseteq X^{<\omega}$ closed under initial segments.

Suppose that $T$ is a tree in $X$.
\begin{enumerate}
\item For $f\in T$ with $\operatorname{dom}(f)=n$ and $x\in X$, we set $f^\frown x:=f\cup \{(n,x)\}$. 
\item For $f\in T$, we set $B_f:=\{x\in X \ : \ f^\frown x\in T\}$.
\item We say that $T$ is \textbf{pruned} if $B_f\not=\emptyset$ for all $f\in T$.
\end{enumerate}
\end{defn}

\begin{defn}
Suppose that $S$ is a semigroup and $T$ is a tree in $S$.  
\begin{enumerate}
\item For $f\in T$, we set $P_f:=FP(\langle f(t)\rangle_{t\in \operatorname{dom}(f)})$.
\item For $f,g\in T$ with $f\subsetneq g$, we set $$P_{g-f}:=FP(\langle  g(t)\rangle_{t\in \operatorname{dom}(g), t\geq \operatorname{dom}(f)}).$$
\item We say that $T$ is an \textbf{FP-tree} if, for each $f\in T$, we have
$$B_f:=\bigcup_{f\subsetneq g}P_{g-f}.$$
\end{enumerate}
\end{defn}

Note that the inclusion $(\subseteq)$ in the previous display always holds.

Suppose that $A\subseteq S$ is such that there is a pruned FP-tree in $A$.  Then $A$ is an FP-set.  Indeed, if $\sigma\in A^\omega$ is an infinite branch in $T$ (meaning that $\sigma|n\in T$ for all $n\in \omega$), then $FP(\langle \sigma(n)\rangle_{n=1}^\infty)\subseteq B_\emptyset\subseteq A$.  Surprisingly, the converse holds:

\begin{thm}\label{FPtree}
$A\subseteq S$ is an FP-set if and only if there is a pruned FP-tree in $A$.  In fact, if $\alpha\in A^*$ is idempotent, then there is an FP-tree $T$ in $A$ such that $\alpha\in B_f^*$ for all $f\in T$. 
\end{thm}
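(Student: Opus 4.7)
My plan is to handle both directions.

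Forward direction: Given a pruned FP-tree $T$ in $A$, I would recursively select $x_{n+1}\in B_{(x_0,\ldots,x_n)}$ at each stage (possible precisely because $T$ is pruned) to produce an infinite branch $(x_n)_{n\geq 0}$. Applying the FP-tree condition at the root then yields $FP(\langle x_n\rangle_{n\geq 0}) \subseteq B_\emptyset$, and since $T \subseteq A^{<\omega}$ forces $B_\emptyset \subseteq A$, the sequence witnesses that $A$ is an FP-set.

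Backward direction: Given that $A$ is an FP-set, the preceding Proposition produces an idempotent $\alpha \in A^*$, and I would build $T$ by recursion on length. To each $f \in T$ I attach a ``working set'' $W_f \subseteq A$ satisfying the twin invariants $\alpha \in W_f^*$ and $W_f \subseteq (W_f)_\alpha$ (call such a set \emph{$\alpha$-good}), and declare $B_f := W_f$. Initialize with $W_\emptyset := \aA$, which is $\alpha$-good by the lemma preceding Proposition \ref{idemFP}. For $f \in T$ and $x \in B_f = W_f$, put $f^\frown x \in T$ and set $W_{f^\frown x} := W_f \cap x^{-1}W_f$; a short verification, using $x \in (W_f)_\alpha$ and $xs \in W_f \subseteq (W_f)_\alpha$ for $s \in W_{f^\frown x}$, shows that $W_{f^\frown x}$ is again $\alpha$-good and contained in $W_f$. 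In particular $\alpha \in W_{f^\frown x}^*$, so $B_{f^\frown x}$ is nonempty, the tree is pruned, and the stronger conclusion about $\alpha$ is witnessed.

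It remains to verify the FP-tree condition, which is the crux. By induction on $n$, I would establish the characterization
$$W_{(x_0,\ldots,x_{n-1})} \;=\; W_\emptyset \cap \bigcap_{\emptyset \neq F \subseteq \{0,\ldots,n-1\}} \Big(\prod_{i \in F} x_i\Big)^{-1} W_\emptyset,$$
with products taken in increasing order of index. Granting this, fix $f \subsetneq g$ in $T$ of lengths $n < m$ and a product $y = \prod_{i \in F} x_i$ for some nonempty $F \subseteq \{n,\ldots,m-1\}$. To place $y$ in $W_f$, the characterization reduces the task to verifying $\prod_{i \in F_0 \cup F} x_i \in W_\emptyset$ for every $F_0 \subseteq \{0,\ldots,n-1\}$; writing $j := \max(F_0 \cup F) \geq n$, this product has the form $(\prod_{i \in (F_0 \cup F)\setminus\{j\}} x_i) \cdot x_j$, and since $x_j \in W_{(x_0,\ldots,x_{j-1})}$, the characterization at level $j$ places it in $W_\emptyset$.

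The main obstacle is isolating the right invariant: merely requiring $\alpha \in W_f^*$ does not suffice to define $W_{f^\frown x}$ (that step needs $x \in (W_f)_\alpha$), so the $\alpha$-good condition is what makes the recursion self-sustaining and, via the displayed characterization, ensures that $B_f$ is closed under every subsequence-product from extensions of $f$ — not only those terminating at the most recently added node.
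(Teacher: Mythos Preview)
Your proof is correct and follows essentially the same approach as the paper: the tree you build is identical to the paper's (your recursive rule $W_{f^\frown x} = W_f \cap x^{-1}W_f$ unwinds to exactly the paper's direct definition $B_f = \{x \in \aA : P_f \cdot x \subseteq \aA\}$, as your intersection formula confirms), and your ``$\alpha$-good'' invariant is a repackaging of the paper's invariant $P_f \subseteq \aA$. The only cosmetic difference is that the paper starts from the explicit description of $B_f$ and verifies the FP-tree condition by a short case analysis on the last factor, whereas you define $B_f$ recursively, derive the explicit formula, and then use it for the same verification.
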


\begin{proof}
Suppose that $\alpha\in A^*$ is idempotent.  We construct $T$ level by level by recursion so that $P_f \subseteq \aA$ for all $f\in T$.  Clearly $T_0:=\{\emptyset\}$.  We set $T_1:=\aA$.  Now suppose that $f\in T_n$.  We then set $B_f:=\{x\in \aA \ : \ P_f\cdot x\subseteq \aA\}$, noting that $\alpha\in B_f^*$ (since $P_f\subseteq \aA$).  It will be useful to observe that the construction ensures that, for $f,g\in T$ with $f\subseteq g$, we have $B_g\subseteq B_f$.  

We now verify that $T$ is an FP-tree.  Suppose that $f\subsetneq g$ and $x\in P_{g-f}$.  Write $x=g(t_1)\cdots g(t_n)$ with $t_1\geq \operatorname{dom}(f)$.  Let $h$ be the restriction of $g$ with domain $t_n$.  If $n=1$, then $x=g(t_1)\in B_h\subseteq B_f$, as desired.  Otherwise, set $w:=g(t_1)\cdots g(t_{n-1})\in P_h$.  We need to show that $x=wg(t_n)\in B_f$.  Since $g(t_n)\in B_h$, we have that $x\in \aA$ by definition of $B_h$.  To see that $P_f\cdot x\subseteq \aA$, note that $P_fw\subseteq P_h$, so $P_f\cdot x\subseteq P_h\cdot g(t_n)\subseteq \aA$, again by the definition of $B_h$.    
\end{proof}

The desired combinatorial characterization of central sets arises from strengthening the notion of FP-tree.

\begin{defn}[Temporary]
We call $\cal A\subseteq \cal P(S)$ \textbf{robust} if there is minimal $\alpha\in \bigcap_{A\in \cal A}A^*$.  We call an FP-tree $T$ \textbf{robust} if $\{B_f \ : \ f\in T\}$ is robust.
\end{defn}

\begin{cor}
If $A$ is central, then there is a robust FP-tree in $A$.
\end{cor}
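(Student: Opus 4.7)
The plan is to reduce this corollary directly to the strengthened version of Theorem \ref{FPtree}. Since $A$ is central, by definition there exists a minimal idempotent $\alpha \in A^*$. The key observation is that Theorem \ref{FPtree} was proved in a stronger form than simply asserting the existence of a pruned FP-tree: it produces an FP-tree $T$ in $A$ such that $\alpha \in B_f^*$ for every $f \in T$. Crucially, nothing in that construction used anything about $\alpha$ beyond idempotence, so the witness $\alpha$ is the very same $\alpha$ we started with.

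With this in hand, the proof becomes immediate. First I would invoke centrality to fix a minimal idempotent $\alpha \in A^*$. Then I would apply Theorem \ref{FPtree} to obtain an FP-tree $T$ in $A$ with $\alpha \in B_f^*$ for all $f \in T$. This exactly says that
\[
\alpha \in \bigcap_{f \in T} B_f^*.
\]
Since $\alpha$ is minimal, the family $\{B_f : f \in T\}$ is robust by definition, and therefore $T$ itself is a robust FP-tree in $A$.

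There is no real obstacle here; the work was already done in the proof of Theorem \ref{FPtree}, which was deliberately phrased so that the single nonstandard element $\alpha$ simultaneously witnesses membership in every $B_f^*$. The only subtlety worth flagging is that robustness is a condition about a \emph{single} nonstandard element lying in the intersection of all the $B_f^*$, rather than a condition verified one $B_f$ at a time; fortunately, Theorem \ref{FPtree} delivers precisely such a uniform witness, and if that witness happens to be minimal (as it does when $A$ is central) one gets robustness for free.
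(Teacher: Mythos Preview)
Your proposal is correct and is exactly the argument the paper has in mind: the corollary is stated without proof immediately after the definition of robustness, precisely because Theorem~\ref{FPtree} was deliberately formulated to produce a single idempotent $\alpha$ lying in every $B_f^*$, so that when $\alpha$ is additionally minimal the resulting FP-tree is robust by definition.
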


We aim to show that the converse holds.  First:

\begin{defn}
Suppose that $\cal C:=(C_i)_{i\in I}$ is a family of subsets of $A$ with the finite intersection property.  We say that $\cal C$ is \textbf{good} if:  for every $i\in I$ and every $x\in C_i$, there is $j\in I$ such that $x\cdot C_j\subseteq C_i$.
\end{defn}

\begin{lem}
Suppose that $\cal C$ is a good family of subsets of $S$.  Then $\bigcap_{i\in I}C_i^*$ is a nonempty closed $u$-subsemigroup of $S^*$.
\end{lem}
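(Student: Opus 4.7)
The plan is to verify three things: nonemptiness, closedness, and the u-subsemigroup property. Nonemptiness is immediate from the finite intersection property of $\cal C$: any finite subintersection $C_{i_1}^* \cap \cdots \cap C_{i_n}^*$ is nonempty (by transfer from the standard FIP), and sufficient saturation then gives $\bigcap_{i\in I} C_i^* \ne \emptyset$. Closedness is clear because each $C_i^*$ is a basic clopen set in the u-topology on $S^*$.

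For the u-subsemigroup property, fix $\alpha,\beta \in \bigcap_{i\in I} C_i^*$; we must produce $\gamma \in \bigcap_{i\in I} C_i^*$ with $\gamma \sim \alpha \cdot \beta^*$. The main step is to show $\alpha \cdot \beta^* \in C_i^{**}$ for every $i$, i.e., $C_i \in \cU_{\alpha \cdot \beta^*}$. Fix $i\in I$. For each $x \in C_i$, goodness supplies some $j \in I$ with $x \cdot C_j \subseteq C_i$; since $\beta \in C_j^*$ by hypothesis, this gives $x \cdot \beta \in C_i^*$. Hence the statement ``for every $x \in C_i$, $x \cdot \beta \in C_i^*$'' holds. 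Transferring this one level up (so that $\beta$ becomes $\beta^*$, $C_i$ becomes $C_i^*$, and $C_i^*$ becomes $C_i^{**}$) yields ``for every $x \in C_i^*$, $x \cdot \beta^* \in C_i^{**}$,'' and applying this at $x = \alpha$ gives $\alpha \cdot \beta^* \in C_i^{**}$, as desired.

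Since $C_i \in \cU_{\alpha \cdot \beta^*}$ for every $i$, saturation produces a nonstandard generator $\gamma$ of the ultrafilter $\cU_{\alpha \cdot \beta^*}$ that lies in $\bigcap_{A\in \cU_{\alpha \cdot \beta^*}} A^*$, and in particular in every $C_i^*$. Such a $\gamma$ satisfies $\gamma \sim \alpha \cdot \beta^*$ and witnesses the u-subsemigroup property. The only delicate point is phrasing the consequence of goodness in a form amenable to the single transfer step; the rest is routine bookkeeping with iterated hyperextensions.
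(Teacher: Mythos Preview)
Your proof is correct and follows essentially the same route as the paper. The only cosmetic difference is that where you phrase the key step as a direct transfer of the implication ``$x\in C_i \Rightarrow x\cdot\beta\in C_i^*$'' up one level, the paper packages the same fact as the inclusion $C_i\subseteq (C_i)_\beta$ and then invokes the already-established equivalence $\alpha\in A_\beta^*\Leftrightarrow \alpha\cdot\beta^*\in A^{**}$; likewise, your final appeal to saturation to produce $\gamma\in S^*$ is exactly what the paper means by ``any $\gamma\sim\alpha\cdot\beta^*$ belongs to $M$.''
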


\begin{proof}
Set $M:=\bigcap_{i\in I}C_i^*$.  $M$ is clearly closed and is nonempty by the finite intersection property.  To see that $M$ is a u-subsemigroup, suppose $\alpha,\beta\in M$.  It suffices to show that $\alpha\cdot \beta^*\in C_i^{**}$ for all $i\in I$ (for then any $\gamma\sim \alpha\cdot \beta^*$ belongs to $M$).  Fix $x\in C_i$ and take $j$ such that $x\cdot C_j\subseteq C_i$.  We then have that $x\beta\in C_i^*$.  It follows that $C_i\subseteq (C_i)_\beta$, so $\alpha\in (C_i)_\beta^*$, that is, $\alpha\beta^*\in C_i^{**}$, as desired.
\end{proof}             

\begin{cor}
Suppose that there is a robust good family of subsets of $A$.  Then $A$ is central.
\end{cor}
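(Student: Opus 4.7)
The plan is to produce a minimal idempotent $\alpha \in A^*$ by finding an idempotent in the intersection of a closed u-subsemigroup sitting inside $A^*$ with a minimal left u-ideal, and then appealing to the corollary that every nonempty closed u-subsemigroup of $S^*$ contains an idempotent. The natural closed u-subsemigroup to use is $M := \bigcap_{i \in I} C_i^*$, which the preceding lemma delivers as a nonempty closed u-subsemigroup contained in $A^*$.

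Using robustness I would choose a minimal $\alpha_0 \in M$; then $\cU_{\alpha_0} \in K(\beta S)$ sits in some minimal left ideal $I_0$ of $\beta S$, and I would set $L := \pi^{-1}(I_0)$. This $L$ is a (full) minimal left u-ideal containing $\alpha_0$, and it is closed in the u-topology since $\pi$ is continuous and $I_0$ is closed in $\beta S$. The key step is to check that $L \cap M$ is again a nonempty closed u-subsemigroup of $S^*$. Nonemptiness is witnessed by $\alpha_0 \in L \cap M$, closedness is automatic, and for the u-subsemigroup property, given $\beta_1, \beta_2 \in L \cap M$, the left u-ideal property of $L$ produces some $\gamma \in L$ with $\gamma \sim \beta_1 \cdot \beta_2^*$, while the u-subsemigroup property of $M$ produces some $\gamma' \in M$ with $\gamma' \sim \beta_1 \cdot \beta_2^*$; since $\gamma \sim \gamma'$ and $M$ is closed under $\sim$ (being an intersection of sets of the form $C_i^*$), one concludes $\gamma \in L \cap M$. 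The idempotent corollary then supplies an idempotent $\alpha \in L \cap M$, and this $\alpha$ is both minimal (as $\cU_\alpha \in \pi(L) = I_0 \subseteq K(\beta S)$) and lies in $A^*$ (as $M \subseteq A^*$), witnessing that $A$ is central.

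The main thing to be careful about is the small bookkeeping that $M$ is $\sim$-saturated, which is what forces the two choices $\gamma \in L$ and $\gamma' \in M$ above to be packaged into a common element of $L \cap M$. Happily this is immediate from the description of $M$ as an intersection of $C_i^*$'s, so there is no genuine obstacle; the argument amounts to stitching together the pieces already in place in Section 3, with robustness supplying the intersection with a minimal left u-ideal that upgrades the idempotent produced by the corollary into a \emph{minimal} idempotent.
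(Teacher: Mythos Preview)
Your proposal is correct and is essentially the paper's own argument, carried out with more care. The paper simply asserts that $L\cap\bigcap_{i\in I}C_i^*$ is a closed $u$-subsemigroup and invokes the existence of an idempotent in it; your write-up supplies exactly the missing verifications (construction of $L$ from robustness, $\sim$-saturation of $M$, and why the resulting idempotent is minimal).
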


\begin{proof}
Let $(C_i)_{i\in I}$ be a robust good family of subsetsof $A$.  Then there is a minimal left $u$-ideal $L$ such that $L\cap \bigcap_{i\in I}C_i^*\not=\emptyset$.  Since $L\cap \bigcap_{i\in I}C_i^*$ is a closed $u$-subsemigroup of $S^*$, it contains an idempotent element $\alpha$; since $\alpha\in A^*$, we see that $A$ is central.
\end{proof}

To bridge the gap between FP-trees and good families, we make a new definition:

\begin{defn}
Suppose that $T$ is a tree in $S$.  We say that $T$ is a \textbf{$*$-tree} if:  for every $f\in T$ and every $x\in B_f$, we have $x\cdot B_{f^\frown x}\subseteq B_f$.
\end{defn}
In other words, $T$ is a $*$-tree if:  for all $f\in T$ and $x,y\in S$, if $f^\frown x^\frown y\in T$, then $f^\frown xy\in T$.  The following lemma is routine and does not need any nonstandard methods.  See \cite[Lemma 14.23.1 and Theorem 14.25]{HS} for proofs.

\begin{lem}

\

\begin{enumerate}
\item Every FP-tree is a $*$-tree.
\item Suppose that $T$ is a $*$-tree and $C_F:=\bigcap_{f\in F}B_f$.  Then $(C_F)$ is a good family.  If $T$ is robust, then the family is robust.
\end{enumerate}
\end{lem}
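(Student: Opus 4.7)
My plan divides along the two items of the lemma.

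For (1), the approach is to unpack the definitions directly. Given $f\in T$ and $x\in B_f$, I will fix an arbitrary $y\in B_{f^\frown x}$ and aim to show $xy\in B_f$. The key observation is that $g:=f^\frown x^\frown y$ lies in $T$ and strictly extends $f$, with $g(\operatorname{dom}(f))=x$ and $g(\operatorname{dom}(f)+1)=y$. Hence $xy$ is precisely the two-term product $g(\operatorname{dom}(f))\cdot g(\operatorname{dom}(f)+1)\in P_{g-f}$, and the FP-tree property then gives $xy\in P_{g-f}\subseteq B_f$. Since $y$ was arbitrary, this establishes $x\cdot B_{f^\frown x}\subseteq B_f$, confirming the $*$-tree condition.

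For (2), there are two assertions: that $(C_F)$ is a good family, and that $(C_F)$ is robust whenever $T$ is robust. The substantive content is the extension condition defining ``good.'' Given finite $F\subseteq T$ and $x\in C_F$, my plan is to define $F':=\{f^\frown x : f\in F\}$; this lies in $T$ because $x\in B_f$ for each $f\in F$. Applying the $*$-tree property to every $f\in F$ gives $x\cdot B_{f^\frown x}\subseteq B_f$, and intersecting over $f\in F$ produces $x\cdot C_{F'}\subseteq C_F$, as required. For the finite intersection property together with the robustness of $(C_F)$, I will use a minimal $\alpha\in\bigcap_{f\in T}B_f^*$ witnessing robustness of $T$: such $\alpha$ automatically lies in $C_F^*=\bigcap_{f\in F}B_f^*$ for every finite $F\subseteq T$, so each $C_F$ is nonempty, and the same $\alpha$ witnesses robustness of $(C_F)$.

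The only mild obstacle I foresee is the finite intersection property of $(C_F)$ in the absence of a robustness hypothesis on $T$, which appears to require implicitly restricting the index set to those $F$ with $C_F$ nonempty, or else some additional assumption such as pruning along common extensions. Since the application of this lemma in the surrounding text always invokes robustness---yielding a robust good family which, via the preceding corollary, places a minimal idempotent in $A^*$---this subtlety is immaterial for the intended use.
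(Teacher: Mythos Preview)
Your proposal is correct. The paper does not actually supply a proof of this lemma: it simply remarks that the result ``is routine and does not need any nonstandard methods'' and cites \cite[Lemma~14.23.1 and Theorem~14.25]{HS}. Your arguments for both parts are exactly the standard ones---for (1), using that $xy\in P_{g-f}$ for $g=f^\frown x^\frown y$, and for (2), taking $F':=\{f^\frown x:f\in F\}$ and applying the $*$-tree condition coordinatewise. Your observation about the finite intersection property in the non-robust case is well taken and is indeed a harmless wrinkle: the paper only ever invokes the lemma together with robustness, which (as you note) forces each $C_F$ to be nonempty via the witnessing minimal $\alpha$.
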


We can now summarize:

\begin{thm}
For $A\subseteq S$, the following are equivalent:
\begin{enumerate}
\item $A$ is central.
\item There is a robust FP-tree in $A$.
\item There is a robust $*$-tree in $A$.
\item There is a robust good family of subsets of $A$.
\end{enumerate}
\end{thm}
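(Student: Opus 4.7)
The plan is to close the cycle of implications $(1) \Rightarrow (2) \Rightarrow (3) \Rightarrow (4) \Rightarrow (1)$, with each implication reduced to a result already recorded in this section. The only thing that needs to be checked at each step is that the witnessing minimal element (or, at the final step, minimal idempotent) survives the translation from one structure to the next.

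For $(1) \Rightarrow (2)$ I would invoke the corollary to Theorem \ref{FPtree}: given a minimal idempotent $\alpha \in A^*$, that theorem produces an FP-tree $T$ in $A$ with $\alpha \in B_f^*$ for every $f \in T$, so the single element $\alpha$ simultaneously witnesses robustness of $\{B_f : f \in T\}$. For $(2) \Rightarrow (3)$, the first clause of the preceding lemma says every FP-tree is automatically a $*$-tree; the collection $\{B_f\}$ is unchanged, so the same minimal $\alpha$ still witnesses robustness.

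For $(3) \Rightarrow (4)$, I would apply the second clause of the preceding lemma: given a robust $*$-tree $T$, define $C_F := \bigcap_{f \in F} B_f$ for $F \in \mathcal{P}_f(T)$; the $*$-tree condition $x \cdot B_{f^\frown x} \subseteq B_f$ is exactly what is needed to verify the goodness axiom, and any minimal $\alpha$ lying in every $B_f^*$ automatically lies in every finite intersection $C_F^*$, preserving robustness. Finally, $(4) \Rightarrow (1)$ is the corollary immediately following the lemma on good families: a robust good family $(C_i)$ gives a minimal left $u$-ideal $L$ meeting $\bigcap_i C_i^*$, and since that intersection is a closed $u$-subsemigroup, the intersection $L \cap \bigcap_i C_i^*$ contains an idempotent, which is then a minimal idempotent lying in $A^*$.

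There is no real obstacle, since every piece of substantive content — the construction of the FP-tree, the passage from $*$-trees to good families, and the extraction of a minimal idempotent from a robust good family — has already been packaged in the preceding results. The proof is essentially bookkeeping, and the only point worth flagging in the writeup is that at each transition the \emph{same} minimal (or minimal idempotent) element continues to witness robustness, so no new nonstandard construction is needed.
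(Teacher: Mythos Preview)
Your proposal is correct and follows exactly the same route as the paper: the theorem is stated there as a summary, with the cycle $(1)\Rightarrow(2)\Rightarrow(3)\Rightarrow(4)\Rightarrow(1)$ assembled from precisely the preceding corollary to Theorem~\ref{FPtree}, the two-clause lemma on FP-trees/$*$-trees/good families, and the corollary on robust good families. Your observation that the same minimal element witnesses robustness at each stage is the only bookkeeping needed, and it matches the paper's (largely implicit) argument.
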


\subsection{Part 2:  Collectionwise piecewise syndetic families}

The issue with the previous theorem is that it is only provides a quasi-combinatorial characterization of central set as it uses the notion of robustness, which is defined in terms of ultrafilters.  The goal of this subsection is to give a combinatorial characterization of robustness.  The basic idea is that since piecewise syndeticity is the same as containing a minimal element, robustness will be equivalent to some form of uniform piecewise syndeticity.  Here is the standard definition that will turn out to be equivalent to robustness:

\begin{defn}
We say that $\cal A\subseteq \cal P(S)$ is \textbf{collectionwise piecewise syndetic} (cwpws) if there are functions $G:\cal P_f(\cal A)\to \cal P_f(S)$ and $x:\cal P_f(\cal A)\times \cal P_f(S)\to S$ such that, for all $F\in \cal P_f(S)$ and all $\cal F,\cal H\in \cal P_f(\cal A)$ with $\cal F\subseteq \cal H$, we have  
$$F\cdot x(\cal H,F)\subseteq G(\cal F)^{-1}(\bigcap \cal F).$$
\end{defn}

Before giving some nonstandard reformulations of cwpws, we remind the reader that, given a set $X$, a \emph{hyperfinite approximation of $X$} is a hyperfinite set $H$ such that $X\subseteq H\subseteq X^*$.  Given enough saturation, every set has a hyperfinite approximation.

\begin{thm}\label{nscwpws}
For a semigroup $S$ and $\cal A\subseteq \cal P(S)$, the following are equivalent:
\begin{enumerate}
\item $\cal A$ is cwpws.
\item For any hyperfinite approximation $H$ of $S$, there is $\alpha \in S^*$ such that, for any finite subset $\cal F$ of $\cal A$, we have $H\cdot \alpha\subseteq S^{-1}(\bigcap \cal F)^*$.
\item For any hyperfinite approximation $H$ of $S$, there is $\alpha\in S^*$ and $G$ as above such that, for any finite subset $\cal F$ of $\cal A$, we have $H\cdot \alpha\subseteq G(\cal F)^{-1}(\bigcap \cal F)^*$.
\item There is a hyperfinite approximation $H$ of $S$,  $\alpha\in S^*$, and $G$ as above such that, for any finite subset $\cal F$ of $\cal A$, we have $H\cdot \alpha\subseteq G(\cal F)^{-1}(\bigcap \cal F)^*$.
\item There is $\alpha\in S^*$ and $G$ as above such that, for any finite subset $\cal F$ of $\cal A$, we have $S\cdot \alpha\subseteq G(\cal F)^{-1}(\bigcap \cal F)^*$.
\end{enumerate}
\end{thm}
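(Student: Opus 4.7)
The plan is to establish the cycle $(1) \Rightarrow (3) \Rightarrow (4) \Rightarrow (5) \Rightarrow (1)$ and close the loop at $(2)$ via the trivial implication $(3) \Rightarrow (2)$ together with a separate saturation argument for $(2) \Rightarrow (3)$.

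For $(1) \Rightarrow (3)$, I take cwpws witnesses $G$ and $x$ together with a hyperfinite approximation $H$ of $S$. Using enough saturation, I extract a ``hyperfinite approximation'' $\cal H$ of $\cal A$, namely some $\cal H \in \cal P_f(\cal A)^*$ with $\cal A \subseteq \cal H \subseteq \cal A^*$. Setting $\alpha := x^*(\cal H, H) \in S^*$, every standard finite $\cal F \subseteq \cal A$ satisfies $\cal F \subseteq \cal H$ by construction, so transferring the defining inclusion in $(1)$ yields $H \cdot \alpha \subseteq G(\cal F)^{-1}(\bigcap \cal F)^*$ (the starred $G^*(\cal F)$ coincides with $G(\cal F)$ because the latter is a standard finite set).

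The implications $(3) \Rightarrow (4)$ and $(3) \Rightarrow (2)$ are immediate --- pick any hyperfinite approximation of $S$, and weaken $G(\cal F)^{-1}$ to $S^{-1}$, respectively --- and $(4) \Rightarrow (5)$ uses only that $S \subseteq H$. For $(5) \Rightarrow (1)$, given $\alpha$ and $G$ from $(5)$, I define $x(\cal H, F)$ for each standard $\cal H \in \cal P_f(\cal A)$ and $F \in \cal P_f(S)$ as follows: because $\cal P_f(\cal H)$ is finite, the conjunction ``$F \cdot y \subseteq G(\cal F)^{-1}(\bigcap \cal F)$ for every $\cal F \subseteq \cal H$'' is a single first-order condition on $y \in S$, and it is satisfied in $S^*$ by $y := \alpha$; transfer then produces some $y \in S$ verifying this conjunction, which I declare to be $x(\cal H, F)$.

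The core of the argument is $(2) \Rightarrow (3)$. Given $H$ and $\alpha$ as in $(2)$, fix a standard finite $\cal F \subseteq \cal A$; then $H \cdot \alpha \subseteq S^{-1}(\bigcap \cal F)^* = \bigcup_{G_0 \in \cal P_f(S)}(G_0^{-1}(\bigcap \cal F))^*$. Using the identity $(G_1 \cup G_2)^{-1} A = G_1^{-1}A \cup G_2^{-1}A$, the family of internal sets $\{H \cdot \alpha \setminus (G_0^{-1}(\bigcap \cal F))^* : G_0 \in \cal P_f(S)\}$ is closed under finite intersection, and the inclusion just displayed shows that its total intersection is empty; by saturation, one of these internal sets is already empty, yielding the required $G(\cal F) \in \cal P_f(S)$ with $H \cdot \alpha \subseteq G(\cal F)^{-1}(\bigcap \cal F)^*$. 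The main obstacles will be ensuring saturation at a cardinality exceeding $|\cal A|$ for the choice of $\cal H$ in $(1) \Rightarrow (3)$, and formulating the internal-family saturation in $(2) \Rightarrow (3)$ cleanly; everything else reduces to transfer or a one-line set-theoretic inclusion.
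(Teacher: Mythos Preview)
Your proof is correct and follows essentially the same route as the paper's. The paper organizes the chain as $(1)\Rightarrow(2)\Rightarrow(3)\Rightarrow(4)\Rightarrow(5)\Rightarrow(1)$, but its proof of $(1)\Rightarrow(2)$ actually establishes the stronger inclusion $H\cdot x(\cal H,H)\subseteq G(\cal F)^{-1}(\bigcap\cal F)^*$ before weakening to $S^{-1}$, which is exactly your $(1)\Rightarrow(3)$; and where the paper dispatches $(2)\Rightarrow(3)$ with the single phrase ``follows immediately from saturation,'' you have written out the underlying internal-family argument explicitly.
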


\begin{proof}
(1) implies (2):  Let $G$ and $x$ witness that $\cal A$ is cwpws.  Let $H$ be any hyperfinite approximation of $S$ and let $\cal H$ be any hyperfinite approximation of $\cal A$.  Then by transfer, for any finite $\cal F$ contained in $\cal A$, we have 
$$H\cdot x(\cal H,H)\subseteq G(\cal F)^{-1}(\bigcap \cal F)^*\subseteq S^{-1}(\bigcap \cal F)^*.$$

(2) implies (3) follows immediately from saturation.

(3) implies (4) and (4) implies (5) are trivial.

(5) implies (1):  Given finite $F$ contained in $S$ and finite $\cal H$ contained in $\cal A$, there are only finite many $\cal F$ contained in $\cal H$, so apply transfer to the statement ``there is $x\in S^*$ such that, for all $\cal F$ contained in $\cal H$, $F\cdot x\subseteq G(\cal F)^{-1}(\bigcap \cal F)^*$'' to get $x(\cal H,F)$.
\end{proof}

Although the following corollary can be deduced with some effort from the standard definition, it is an immediate consequence of the previous theorem.

\begin{cor}
$\cal A\subseteq \cal P(S)$ is cwpws if and only if the closure of $\cal A$ under finite intersections is cwpws.
\end{cor}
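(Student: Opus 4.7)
The plan is to use the nonstandard reformulation from Theorem \ref{nscwpws}, specifically the equivalence of (1) and (5), which uses a single $\alpha \in S^*$ to uniformly witness cwpws over all finite subfamilies. This uniform witness survives passage to the closure under finite intersections essentially for free, whereas the standard definition would force one to keep track of how the witnessing functions $G$ and $x$ depend on the choice of finite subfamily.

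One direction is immediate: cwpws is monotone in the sense that any subfamily of a cwpws family is cwpws, by restricting $G$ and $x$ to smaller domains. Since $\cal A \subseteq \cal A'$, where $\cal A'$ denotes the closure of $\cal A$ under finite intersections, cwpws of $\cal A'$ trivially implies cwpws of $\cal A$.

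For the nontrivial direction, suppose $\cal A$ is cwpws and apply condition (5) of Theorem \ref{nscwpws} to obtain $\alpha \in S^*$ and $G : \cal P_f(\cal A) \to \cal P_f(S)$ such that $S \cdot \alpha \subseteq G(\cal F)^{-1}(\bigcap \cal F)^*$ for every finite $\cal F \subseteq \cal A$. Given a finite subfamily $\cal F' \subseteq \cal A'$, write each member of $\cal F'$ as an intersection of finitely many elements of $\cal A$, and let $\cal F \subseteq \cal A$ be the (finite) union of all the families used. Then $\bigcap \cal F \subseteq \bigcap \cal F'$, so defining $G'(\cal F') := G(\cal F)$ yields
$$S \cdot \alpha \subseteq G(\cal F)^{-1}(\bigcap \cal F)^* \subseteq G'(\cal F')^{-1}(\bigcap \cal F')^*.$$
Thus $\alpha$ and $G'$ witness condition (5) for $\cal A'$, so $\cal A'$ is cwpws.

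There is no real obstacle here; the only thing to notice is that the very same $\alpha$ works for both $\cal A$ and $\cal A'$, which is exactly the payoff of having extracted a single nonstandard witness in Theorem \ref{nscwpws}. Trying to prove the corollary directly from the standard definition would require producing a new family of shift elements $x'(\cal H', F)$ indexed by finite subfamilies of $\cal A'$, which is exactly the bookkeeping that the nonstandard reformulation dispenses with.
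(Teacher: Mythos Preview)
Your proof is correct and follows exactly the approach the paper intends: the paper gives no explicit argument, merely remarking that the corollary is ``an immediate consequence of the previous theorem'' (Theorem~\ref{nscwpws}), and your write-up supplies precisely those details. One minor simplification: if you use condition (2) of Theorem~\ref{nscwpws} instead of (5), you do not even need to construct $G'$, since condition (2) refers only to the intersections $\bigcap \cal F$, and the collection of such intersections over finite $\cal F \subseteq \cal A$ coincides with the collection over finite $\cal F' \subseteq \cal A'$; but your version via (5) is perfectly fine.
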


\begin{remark}
Consider the case of $(\bb N,+)$.  $A\subseteq \bb N$ is piecewise syndetic if and only if there is a hyperfinite interval $I$ such that $A^*\cap I$ has only finite gaps.  Suppose, for simplicity, that $\cal A\subseteq \cal P(\bb N)$ is closed under finite intersections.  The above theorem shows that $\cal A$ is collectionwise piecewise syndetic if and only if there is a hyperfinite interval $I$ such that $A^*\cap I$ has only finite gaps for every $A\in \cal A$.  The cleanliness of the previous statement is why we find the nonstandard description of the notion of cwpws family so natural.
\end{remark}

Here is the main result of this subsection, which completes the combinatorial description of central sets:

\begin{thm}
For $\cal A\subseteq \cal P(S)$, we have that $\cal A$ is cwpws iff $\cal A$  is robust
\end{thm}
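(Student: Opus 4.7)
The plan is to prove the two directions separately, leaning on the nonstandard reformulation of cwpws in Theorem~\ref{nscwpws}(5): namely, $\cal A$ is cwpws iff there exist $\alpha\in S^*$ and $G:\cal P_f(\cal A)\to \cal P_f(S)$ such that $S\cdot \alpha\subseteq G(\cal F)^{-1}(\bigcap\cal F)^*$ for every finite $\cal F\subseteq \cal A$.

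For the direction \emph{robust} $\Rightarrow$ \emph{cwpws}, I would start with a minimal $\alpha\in \bigcap_{A\in\cal A}A^*$ and observe that $\bigcap \cal F\in \cU_\alpha$ for every finite $\cal F\subseteq \cal A$ (ultrafilters are closed under finite intersections). Applying Theorem~\ref{pwsminimal}(2) to each such $\bigcap \cal F$ shows that $(\bigcap \cal F)_\alpha$ is syndetic, so pick a finite $G(\cal F)\subseteq S$ with $S=G(\cal F)^{-1}(\bigcap\cal F)_\alpha$. Unwinding definitions yields $S\cdot \alpha\subseteq G(\cal F)^{-1}(\bigcap \cal F)^*$, and Theorem~\ref{nscwpws} delivers the conclusion. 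This direction should be essentially routine.

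For the harder direction \emph{cwpws} $\Rightarrow$ \emph{robust}, the plan is: use Theorem~\ref{nscwpws}(5) to obtain $\alpha$ and $G$, transfer to get $S^*\cdot\alpha^*\subseteq G(\cal F)^{-1}(\bigcap\cal F)^{**}$ for every finite $\cal F$, and work inside the full left $u$-ideal $\cal L(S^*\cdot \alpha^*)$. Pick a minimal left $u$-ideal $L\subseteq \cal L(S^*\cdot\alpha^*)$, taking $L$ to be full (e.g., the $\pi$-preimage of a minimal left ideal of $\beta S$); then $L$ is closed in the u-topology. Fix any $\beta\in L$, writing $\beta\sim \gamma\cdot\alpha^*$ for some $\gamma\in S^*$. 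For each finite $\cal F$, transfer produces some $t\in G(\cal F)$ with $t\gamma\alpha^*\in(\bigcap\cal F)^{**}$; since $t\gamma\alpha^*\sim t\beta$, this gives $t\beta\in (\bigcap\cal F)^*$, and fullness of $L$ places $t\beta\in L$. Hence $L\cap(\bigcap\cal F)^*\neq \emptyset$ for every finite $\cal F$.

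The main obstacle is that this shift $t$ genuinely depends on $\cal F$, so we do not yet have a single minimal element lying in $A^*$ for every $A\in \cal A$. My plan to handle this is a compactness argument in the u-topology: the family $\{L\cap (\bigcap\cal F)^* : \cal F\in \cal P_f(\cal A)\}$ consists of nonempty closed subsets of the compact set $L$ and is closed under finite intersections (since $(\bigcap\cal F_1)^*\cap (\bigcap\cal F_2)^*=(\bigcap(\cal F_1\cup\cal F_2))^*$), so its total intersection $L\cap \bigcap_{A\in\cal A}A^*$ is nonempty. Any element of this intersection is simultaneously minimal (by membership in $L$) and in $A^*$ for every $A\in\cal A$, exhibiting $\cal A$ as robust.
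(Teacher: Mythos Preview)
Your proof is correct. The \emph{robust} $\Rightarrow$ \emph{cwpws} direction matches the paper's argument exactly.

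For \emph{cwpws} $\Rightarrow$ \emph{robust}, your route differs from the paper's, though both rest on the same shift-then-compactness idea. The paper fixes a single minimal $\gamma\in\cal L(S^*\cdot\alpha^*)$, records for each finite $\cal F$ a shift $t(\cal F)\in G(\cal F)$ with $t(\cal F)\gamma\in(\bigcap\cal F)^*$, and then forms the ``tail'' sets $E(\cal F):=\{t(\cal H):\cal F\subseteq\cal H\}\subseteq S$. Observing that $E(\{A\})\subseteq A_\gamma$ for each $A\in\cal A$ and that the $E(\cal F)$ have the finite intersection property, saturation produces $\beta\in\bigcap_{\cal F}E(\cal F)^*$; the desired minimal witness is then $\beta\cdot\gamma^*$. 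You instead carry along the entire full minimal left $u$-ideal $L\subseteq\cal L(S^*\cdot\alpha^*)$, show each $L\cap(\bigcap\cal F)^*$ is nonempty via the same shift trick applied to a fixed $\beta\in L$, and invoke compactness of $L$ in the $u$-topology to intersect. Your version is arguably more transparent, avoiding the auxiliary sets $E(\cal F)$ and the passage to $S^{**}$, at the modest cost of checking that the full minimal left $u$-ideal is closed (which follows since it is the $\pi$-preimage of a closed minimal left ideal in $\beta S$). The paper's version, on the other hand, stays closer to the nonstandard idiom by using saturation rather than topological compactness.
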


\begin{proof}
First suppose that $\cal A$ is cwpws and take $\alpha$ and $G$ as in condition (5) of Theorem \ref{nscwpws}.  By transfer, we have $S^*\cdot \alpha^*\subseteq G(\cal F)^{-1}(\bigcap \cal F)^{**}$.  Let $\gamma \in\cal L(S^*\cdot \alpha^*)$ be minimal.  For any $\cal F$, let $t(\cal F)\in G(\cal F)$ be such that $\gamma\in t(\cal F)^{-1}(\bigcap \cal F)^{*}$.  and set $E(\cal F):=\{t(\cal H) \ : \ \cal F\subseteq \cal H\}$.  Note that, in particular, for any $A\in \cal A$, that $E(\{A\})\subseteq A_\gamma$.  The family of $E(\cal F)$'s has the finite intersection property, whence there is $\beta \in \bigcap_{\cal F}E(\cal F)^*$.  Then, for $A\in \cal A$, we have $\beta\in A_\gamma^*$, that is, $\beta\cdot \gamma^*\in A^{**}$, so $A\in \cal U_{\beta\cdot \gamma^*}$.  It remains to note that $\beta \cdot \gamma^*$ is minimal.  

Conversely, suppose that $\alpha\in S^*$ is minimal such that $\cal A\subseteq \cal U_\alpha$.  We claim that $\alpha$ witnesses the truth of (5) in Theorem \ref{nscwpws}.  Fix $\cal F$ and, for notational simplicity, set $A:=\bigcap \cal F$.  Since $\alpha$ is minimal, $A_\alpha$ is syndetic, whence there is finite $G(\cal F)\subseteq S$ such that $S^*\subseteq G(\cal F)^{-1}A_\alpha^*$.  It follows that $S^*\alpha^*\subseteq G(\cal F)^{-1}A^{**}$, whence, by transfer, $S\alpha\subseteq G(\cal F)^{-1}A^*$.
\end{proof}


\section{The Central sets theorem}

In this section, we state and prove the Central Sets Theorem, which is arguably the most important result about central sets in applications.

\subsection{The Central Sets Theorem:  statement and consquences}
We first set up some important notation and definitions.
\begin{defn}
Given $m\in \bb N$, $a\in S^{m+1}$, $f\in S^\bb N$, and $t\in \bb N^{[m]}$, we set
$$x(m,a,t,f):=a(1)f(t_1)a(2)f(t_2)\cdots a(m)f(t_m)a(m+1).$$
\end{defn}

\begin{defn}
$A\subseteq S$ is a \textbf{C-set} if there are:
\begin{enumerate}
\item $m:\cal P_f(S^\bb N)\to \bb N$
\item $a\in \prod_{F\in \cal P_f(S^\bb N)}S^{m(F)+1}$
\item  $t\in \prod_{F\in \cal P_f(S^\bb N)}\bb N^{[{m(F)}]}$
\end{enumerate}
satisfying:
\begin{enumerate}
\item[(a)] $F\subsetneq G$ implies $t(F)(m(F))<t(G)(1)$, and
\item[(b)] for all $G_1\subsetneq \cdots\subsetneq G_n$ and $g_i\in G_i$, we have
$$\prod_{i=1}^n x(m(G_i),a(G_i),t(G_i),g_i)\in A.$$
\end{enumerate}
\end{defn}

Our goal is to prove:

\begin{thm}[Central Sets Theorem]\label{centralsettheorem}
Every central set is a C-set.
\end{thm}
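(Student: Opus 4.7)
Fix a minimal idempotent $\alpha \in A^*$, which exists by centrality of $A$. The plan is to construct $m(F), a(F), t(F)$ by recursion on $|F|$, processing each $F$ after all $G \subsetneq F$, while maintaining the invariant: for every chain $G_1 \subsetneq \cdots \subsetneq G_n$ of already-processed finite subsets of $S^\bb N$ and every choice $g_i \in G_i$, the partial product $p := \prod_{i=1}^n x(m(G_i), a(G_i), t(G_i), g_i)$ satisfies $p\alpha \in A^*$ (equivalently $p^{-1}A \in \cU_\alpha$). The empty-chain base case requires only $A \in \cU_\alpha$, which holds by choice of $\alpha$.

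At the inductive step for $F$, let $P$ be the finite collection of admissible partial products from chains ending strictly below $F$, together with a formal empty prefix (giving $A$ itself), and set $T := \max\{t(G)(m(G)) : G \subsetneq F\}$. Define
\[
  E := \bigcap_{p \in P} \bigl(p^{-1}A \cap (p^{-1}A)_\alpha\bigr).
\]
Each $p^{-1}A \in \cU_\alpha$ by the invariant, and idempotence of $\alpha$ (the characterization preceding Proposition~\ref{idemFP}) gives $(p^{-1}A)_\alpha \in \cU_\alpha$; hence $E \in \cU_\alpha$. For $x \in E$ and $p \in P$, we simultaneously get $p \cdot x \in A$ (delivering C-set condition (b) for every chain ending at $F$) and $(p \cdot x)\alpha \in A^*$ (propagating the invariant); condition (a) is built in by $t_1 > T$. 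Thus the inductive step reduces to the following core lemma: given $\alpha$ a minimal idempotent, $E \in \cU_\alpha$, finite $F = \{g_1, \dots, g_r\} \subseteq S^\bb N$, and $T \in \bb N$, produce $m \in \bb N$, $a \in S^{m+1}$, and $T < t_1 < \cdots < t_m$ in $\bb N$ such that $x(m, a, t, g_i) \in E$ for every $i$.

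The main obstacle is this core lemma, where the subtlety is the simultaneous choice of $a_j$'s and $t_j$'s working across all $r$ sequences $g_i$. My plan is to build $a_1, t_1, a_2, t_2, \ldots$ iteratively, maintaining at stage $j$ that the residual
\[
  E_j := \bigcap_{i=1}^r \bigl(q_{i,j}^{-1}E \cap (q_{i,j}^{-1}E)_\alpha\bigr), \qquad q_{i,j} := a_1 g_i(t_1) \cdots a_j g_i(t_j),
\]
still lies in $\cU_\alpha$. The extension step asks for a single pair $(a_{j+1}, t_{j+1})$ with $a_{j+1}\, g_i(t_{j+1}) \in (q_{i,j}^{-1}E)_\alpha$ for every $i$; I plan to obtain it by passing to an iterated hyperextension $S^{(k)*}$ with $k$ chosen large enough to absorb the $r$-fold conjunction, using characterization (3) of minimal elements from Theorem~\ref{pwsminimal} (furnishing witnesses of the form $\alpha \sim \gamma \beta^* \alpha^{**}$) together with the iterated idempotence $\alpha \sim \alpha \cdot \alpha^* \sim \alpha \cdot \alpha^* \cdot \alpha^{**} \sim \cdots$ to cascade through successive insertions, and then descending by transfer to standard $a_{j+1} \in S$ and $t_{j+1} > t_j$ in $\bb N$. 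After finitely many iterations the residual $E_m$ remains in $\cU_\alpha$ and is therefore nonempty, so choosing $a_{m+1} \in E_m$ closes out the construction; the outer induction on $|F|$ then yields the full C-set data.
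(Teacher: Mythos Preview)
Your outer recursion on $|F|$, with the invariant that each admissible partial product $p$ satisfies $p^{-1}A\in\cU_\alpha$, is exactly the paper's Step~2 (the passage from ``idempotent J-element'' to ``C-element''). The set $E$ you form is the paper's set $B=\{x\in{}_\alpha A: M\cdot x\subseteq{}_\alpha A\}$ in slightly different notation, and your reduction is correct: what remains is precisely to show that every $E\in\cU_\alpha$ is a J-set (with the harmless extra constraint $t_1>T$). That statement is the paper's Step~1, and it is the real content of the theorem.

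The gap is in your attack on this ``core lemma''. You propose to build the witness one coordinate at a time, and the extension step asks for a single pair $(a,t)$ with $a\,g_i(t)\in B_i$ for every $i\le r$, where each $B_i\in\cU_\alpha$. Two problems. First, your sketch---invoke characterization~(3) of Theorem~\ref{pwsminimal} and iterated idempotence inside $S^{(k)*}$, then transfer down---does not actually produce such a pair: characterization~(3) gives, for each $\beta$, some $\gamma$ with $\alpha\sim\gamma\beta^*\alpha^{**}$, but the $\gamma$ depends on $\beta$, so applying it to $\beta=g_i(\tau)$ yields different $\gamma_i$ for different $i$, whereas you need a \emph{single} $a$; nothing you wrote explains how the ``$r$-fold conjunction'' is absorbed. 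Second, and more decisively, if your extension step were always available then you could stop after one step and take $m=1$ in the J-set witness for every $E\in\cU_\alpha$. In noncommutative semigroups this is false: the need for unbounded $m$ in the definition of J-set is genuine (this is why the definition carries $m$ at all; see the discussion around the noncommutative Central Sets Theorem in \cite{johnson}). So the extension step, as stated, cannot succeed in general.

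The paper handles Step~1 by first proving that the family of J-sets is partition regular---this is where the Hales--Jewett theorem enters, and it is exactly the device that manufactures the simultaneous witness across $g_1,\ldots,g_r$ at the cost of enlarging $m$---and then deducing that the J-elements form a nonempty two-sided $u$-ideal, hence contain every minimal element. You will need either that argument or the direct minimal-ideal argument of \cite{johnson}; the iterated-hyperextension bookkeeping alone does not supply the missing combinatorics.
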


\begin{remark}
The converse to the Central Sets Theorem is false; see \cite[Thm 14.18]{HS} for a concrete counterexample.
\end{remark}

We will prove the Central Sets theorem in the next section.  The version of the theorem presented here is the strongest known version of the theorem, which has undergone several improvements since its original version, due to Furstenberg:

\begin{thm}[Furstenberg's Central Sets Theorem]
Suppose that $A\subseteq \bb N$ is central and $\langle y^1_{n}\rangle_{n=1}^\infty,\ldots,\langle y^k_{n}\rangle_{n=1}^\infty$ are sequences in $\bb Z$.  Then there is a sequence $\langle a_n\rangle_{n=1}^\infty$ from $\bb N$ and an increasing sequence $\langle H_n\rangle_{n=1}^\infty$ from $\cal P_f(\bb N)$ (meaning that $\max H_n<\min H_{n+1}$) such that, for all $i=1,\ldots,k$, we have
$$FS\left(\left\langle a_n+\sum_{t\in H_n}y^i_t\right\rangle_{n=1}^\infty\right)\subseteq A.$$
\end{thm}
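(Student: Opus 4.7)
The plan is to derive Furstenberg's version directly from the Central Sets Theorem (Theorem \ref{centralsettheorem}) applied to the semigroup $S = (\bb N, +)$. The main obstacle is that the definition of a C-set uses sequences in $S^\bb N = \bb N^\bb N$, whereas the hypothesis permits each $y^i$ to take values in $\bb Z$. I will handle this with a simple positivization trick: for each $n \in \bb N$, set $c_n := \max(0, 1 - \min_{1 \leq i \leq k} y^i_n)$ and define $\tilde y^i_n := y^i_n + c_n$, so that each $\tilde y^i$ lies in $\bb N^\bb N$. The shifts $c_n$ will be absorbed into the $a_n$'s at the end, so they never appear in the final conclusion.

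Having positivized, apply Theorem \ref{centralsettheorem} to $A$ to extract functions $m, a, t$ witnessing that $A$ is a C-set. Fix an injective sequence $z_1, z_2, \ldots$ in $\bb N^\bb N$ disjoint from $\{\tilde y^1, \ldots, \tilde y^k\}$, and define the strictly increasing chain
$$F_n := \{\tilde y^1, \ldots, \tilde y^k, z_1, \ldots, z_n\} \in \cal P_f(\bb N^\bb N).$$
For each $n$, set
$$H_n := \{t(F_n)(s) : 1 \leq s \leq m(F_n)\} \quad \text{and} \quad a_n := \sum_{j=1}^{m(F_n)+1} a(F_n)(j) + \sum_{s=1}^{m(F_n)} c_{t(F_n)(s)}.$$
Condition (a) of the C-set definition applied to the strict containments $F_n \subsetneq F_{n+1}$ gives $\max H_n = t(F_n)(m(F_n)) < t(F_{n+1})(1) = \min H_{n+1}$, so $\langle H_n \rangle$ is increasing in the required sense, and $a_n \geq a(F_n)(1) \geq 1$ ensures $a_n \in \bb N$.

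The verification reduces to the key identity
$$a_n + \sum_{t \in H_n} y^i_t = x(m(F_n), a(F_n), t(F_n), \tilde y^i),$$
which is immediate from the construction: adding $\sum_s c_{t(F_n)(s)}$ to $\sum_s y^i_{t(F_n)(s)}$ turns each $y^i_t$ into $\tilde y^i_t$ in the formula defining $x$. Given any finite nonempty $J = \{n_1 < \cdots < n_N\} \subseteq \bb N$ and any $i \in \{1, \ldots, k\}$, the sequence $\tilde y^i$ lies in each $F_{n_j}$ by construction of the $F_n$'s, so condition (b) of the C-set definition, applied with $G_j := F_{n_j}$ and $g_j := \tilde y^i$, yields
$$\sum_{j=1}^N \left( a_{n_j} + \sum_{t \in H_{n_j}} y^i_t \right) = \sum_{j=1}^N x(m(F_{n_j}), a(F_{n_j}), t(F_{n_j}), \tilde y^i) \in A.$$
This is exactly the required FS-condition, completing the proof. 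The only subtle point is the positivization step; the rest is essentially a translation between Furstenberg's parameters and the data $(m, a, t)$ produced by the stronger Central Sets Theorem.
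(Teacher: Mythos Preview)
Your derivation is correct. The paper itself does not prove this statement; after stating it, the author simply writes ``For a discussion of how to derive Furstenberg's Central Set Theorem from Theorem \ref{centralsettheorem}, see \cite{Hindman}.''  What you have written is precisely the standard reduction one finds in that reference: manufacture a strictly increasing chain $F_1\subsetneq F_2\subsetneq\cdots$ in $\cal P_f(S^{\bb N})$ all containing the given sequences, read off $a_n$ and $H_n$ from $a(F_n)$ and $t(F_n)$, and invoke clause (b) of the C-set definition with the constant choice $g_j=\tilde y^i$.  Your positivization step (replacing each $y^i$ by $\tilde y^i_n=y^i_n+c_n$ and absorbing the $c$'s into the $a_n$'s) is the usual way to handle the discrepancy between $\bb Z$-valued inputs and the requirement that elements of $S^{\bb N}=\bb N^{\bb N}$ take positive values, and your verification of the key identity $a_n+\sum_{t\in H_n}y^i_t=x(m(F_n),a(F_n),t(F_n),\tilde y^i)$ is straightforward once one unwinds the additive form of $x(m,a,t,f)$.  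So there is nothing to compare: you have supplied the argument the paper chose to outsource.
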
 

For a discussion of how to derive Furstenberg's Central Set Theorem from Theorem \ref{centralsettheorem}, see \cite{Hindman}.

Furstenberg used the Central Sets Theorem to establish that any (kernel) partition regular system of equations over $\bb Q$ must have a solution in any central set.  A later application of the Central Sets Theorem showed that a system of equations over $\bb Q$ is image partition regular if and only if the column space of the matrix for the equation meets every central set. 

The combinatorial applications of the Central Sets Theorem are quite numerous and we suggest that the reader consult \cite{Hindman} and \cite{HS} for more information.  Since these applications involve straightforward (but nontrivial) standard reasoning using the Central Sets Theorem, we shall say no more about them here.

\subsection{The proof of the Central Set Theorem}

To prove the Central Sets Theorem, we need an auxiliary notion:

\begin{defn}
$A\subseteq S$ is a \textbf{J-set} if:  for every $F\in \cal P_f(S^\bb N)$, there is $m\in \bb N$, $a\in S^{m+1}$, and $t\in \bb N^{[m]}$ such that $x(m,a,t,f)\in A$ for all $f\in F$.
\end{defn}

We note an easy observation about J-sets:

\begin{lem}
Suppose that $A\subseteq S$ is a J-set, $F\in \cal P_f(S^\bb N)$, and $k\in \bb N$.  Then there are $m,a,t$ with $t(1)>k$ such that $x(m,a,t,f)\in A$ for all $f\in F$.
\end{lem}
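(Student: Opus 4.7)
The plan is to obtain the larger starting index by a simple reindexing trick: shift every function in $F$ by $k$, apply the J-set definition to the shifted family, and then shift the resulting index set back.

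More precisely, for each $f\in F$ I define $\tilde f\in S^\bbN$ by $\tilde f(n):=f(n+k)$, and set $\tilde F:=\{\tilde f\ :\ f\in F\}\in \cal P_f(S^\bbN)$. Since $A$ is a J-set, the definition applied to $\tilde F$ yields $m\in \bbN$, $a\in S^{m+1}$ and $\tilde t\in \bbN^{[m]}$ with $x(m,a,\tilde t,\tilde f)\in A$ for all $\tilde f\in \tilde F$. Define $t\in \bbN^{[m]}$ by $t(i):=\tilde t(i)+k$; then $t(1)=\tilde t(1)+k>k$ as required.

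The final step is to observe the identity $x(m,a,t,f)=x(m,a,\tilde t,\tilde f)$ for each $f\in F$, which is immediate from the definition
$$x(m,a,t,f)=a(1)f(t(1))a(2)f(t(2))\cdots a(m)f(t(m))a(m+1)$$
together with $f(t(i))=f(\tilde t(i)+k)=\tilde f(\tilde t(i))$. Hence $x(m,a,t,f)\in A$ for every $f\in F$, finishing the proof.

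There is essentially no obstacle here; the content of the lemma is just that the definition of J-set is translation-invariant in the index variable, and the reindexing by $+k$ witnesses this directly.
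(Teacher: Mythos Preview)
Your proof is correct and follows exactly the same idea as the paper's proof, which simply says to apply the definition of J-set to the shifted family $\{g_f : f\in F\}$ with $g_f(n):=f(k+n)$. You have merely spelled out the details (defining $t(i):=\tilde t(i)+k$ and verifying the identity $x(m,a,t,f)=x(m,a,\tilde t,\tilde f)$) that the paper leaves implicit.
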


\begin{proof}
Apply the definition of J-set to $G:=\{g_f \ : \ f\in F\}$ with $g_f(n):=f(k+n)$.
\end{proof}

There is an obvious nonstandard formulation of being a J-set, but we have not found it too useful thus far:

\begin{lem}
$A\subseteq S$ is a J-set if and only if there is $M\in \bb N^*$, $a\in (S^*)^{M+1}$, and $t\in (\bb N^*)^{[M]}$ such that, for all $f\in S^\bb N$, we have
$$x(M,a,t,f)\in A^*.$$
\end{lem}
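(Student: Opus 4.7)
The plan is a routine saturation-plus-transfer argument, with one ingredient per direction.

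For the forward direction, assume $A$ is a J-set. For each standard $f\in S^{\bb N}$, I would introduce the set
\[
X_f \ := \ \{(M,a,t) \ : \ M\in \bb N^*,\ a\in (S^*)^{M+1},\ t\in (\bb N^*)^{[M]},\ x(M,a,t,f)\in A^*\}.
\]
For fixed standard $f$ this is internal: the defining condition is internal in the free variables $M,a,t$, with $f$ and $A$ entering as standard parameters. Applying the J-set property to a finite subfamily $\{f_1,\ldots,f_n\}\subseteq S^{\bb N}$ produces a standard triple that lies in $X_{f_1}\cap\cdots\cap X_{f_n}$, so the family $\{X_f\}_{f\in S^{\bb N}}$ has the finite intersection property. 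Since the nonstandard extension is assumed as saturated as necessary (in particular beyond $|S^{\bb N}|$), I would extract a single $(M,a,t)\in\bigcap_{f\in S^{\bb N}} X_f$, which is precisely the triple asserted by the lemma.

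For the reverse direction, suppose we are given $(M,a,t)$ as in the statement and fix a standard finite $F\subseteq S^{\bb N}$. The existential
\[
(\exists m\in \bb N)(\exists a\in S^{m+1})(\exists t\in \bb N^{[m]})(\forall f\in F)\ x(m,a,t,f)\in A
\]
is a first-order statement about the standard parameters $F$ and $A$, and its nonstandard form, obtained by replacing $\bb N,S,A$ by $\bb N^*,S^*,A^*$ (and noting $F^*=F$ since $F$ is finite), is already witnessed by our $(M,a,t)$. Transfer therefore yields standard witnesses, confirming the J-set property for $F$.

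I expect the only point requiring a moment of care is the internality of $X_f$. Because $f:\bb N\to S$ is a standard function, it appears in the definition as a single parameter rather than a bound variable, so $f(t_i)$ for an internal index $t_i$ is unambiguously the internal extension of $f$ evaluated at $t_i$, and the hyperfinite product defining $x(M,a,t,f)$ is an internal function of $(M,a,t)$. Once this bookkeeping is in place, both directions reduce to direct applications of saturation and transfer respectively, and I anticipate no substantive obstacle.
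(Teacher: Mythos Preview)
Your proposal is correct. The paper states this lemma without proof, introducing it merely as ``an obvious nonstandard formulation of being a J-set''; your saturation-plus-transfer argument is exactly the routine verification the paper leaves to the reader, and your remark on the internality of $X_f$ (via the internal extension of the standard parameter $f$) is the only point that needs any care.
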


\begin{defn}
We call $\alpha\in S^*$ a \textbf{J-element} (resp. \textbf{C-element}) if every $A\in \cU_\alpha$ is a J-set (resp. C-set).
\end{defn}

One proves the Central Set Theorems in two steps:

Step 1:  Show that every minimal element is a J-element.  

Step 2:  Show that every idempotent J-element is a C-element.  

\begin{remarks}

\

\begin{enumerate}
\item It follows from Step 1 that that every piecewise syndetic set is a J-set.
\item The converse of the statement in Step 2 is also true, but since its proof is much more involved, we will not prove it here.  Note that this converse implies that the collection of C-sets is also partition regular.
\end{enumerate}
\end{remarks}

We start with Step 1.  There is a direct proof of Step 1 that uses more facts about minimal ideals than we would like to present here (see \cite[Thm 2.11]{johnson}).  We prefer the following strategy towards establishing Step 1:

Step 1a:  Show that there is a J-element.

Step 1b:  Show that the set of J-elements is a (nonempty by Step 1a) u-ideal of $S^*$.

To prove Step 1a, we first prove:

\begin{thm}\label{Jreg}
The family of J-sets is partition regular. 
\end{thm}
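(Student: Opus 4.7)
The plan is to argue by induction on the number of pieces, so it suffices to treat the binary case $A=B\sqcup C$. Suppose for contradiction that $A$ is a J-set but neither $B$ nor $C$ is, and fix witnesses $F_B,F_C\in \cal P_f(S^\bb N)$ to the failure of $B$ and $C$, respectively, to be J-sets.

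The strategy is to build from $F_B$ and $F_C$ a rich finite family $F\in \cal P_f(S^\bb N)$ of ``interleaved'' sequences: for each pair $(g,h)\in F_B\times F_C$ and each breakpoint $N$ chosen from a long initial segment of $\bb N$, include in $F$ a sequence $g*_N h$ that looks like $g$ on indices $\leq N$ and like a shifted copy of $h$ on indices $>N$. Applying the J-set property of $A$ to $F$ yields a tuple $(m,a,t)$ with $x(m,a,t,f)\in A$ for every $f\in F$; the indices $t_1<\cdots<t_m$ split at a unique threshold $j=j(f)$ into those falling in the $g$-block and those in the $h$-block, and the product $x(m,a,t,g*_N h)$ factors naturally as a $g$-part and an $h$-part joined by a semigroup element. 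The coloring $f\mapsto \chi(f)\in\{B,C\}$ induced by the partition $A=B\sqcup C$ is then pigeonholed over the many choices of $N$ and the two colors in order to extract a monochromatic configuration.

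Two ingredients are pivotal. First, the auxiliary lemma allowing us to shift $t(1)$ past any prescribed bound $k$ lets us absorb the ``other side'' of the product into the entries of $a$, so that the one-sided restriction of the ambient $A$-witness genuinely has the form $x(m',a',t',g)$ (resp.\ $x(m'',a'',t'',h)$) for $g\in F_B$ (resp.\ $h\in F_C$). Second, with enough breakpoints $N$ in play and only two colors $\chi(f)\in\{B,C\}$, a standard Ramsey-style pigeonhole produces a subfamily on which the color depends only on one of the two indexing coordinates; after restricting and absorbing, one obtains a J-witness for $B$ on all of $F_B$ or a J-witness for $C$ on all of $F_C$, contradicting the choice of $F_B$ or $F_C$.

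The main obstacle will be the indexing: verifying that after absorbing extraneous factors from the opposite block into the semigroup elements of $a$, the restricted data really produces a valid expression of the form $x(m',a',t',g)$, so that the final contradiction is drawn from the definition of J-set for the correct witness family. This is a finite but somewhat delicate bookkeeping exercise that rests on the associativity of the semigroup operation and the rigid alternating structure of $x(m,a,t,f)=a(1)f(t_1)a(2)\cdots a(m)f(t_m)a(m+1)$.
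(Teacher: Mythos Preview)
Your proposal has a genuine gap at the ``standard Ramsey-style pigeonhole'' step, and the shifting lemma does not do the work you assign to it.

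After applying the J-set property of $A$ to your interleaved family you obtain a single $(m,a,t)$ and a coloring $\chi:F_B\times F_C\times\{1,\dots,K\}\to\{B,C\}$. To contradict the choice of $F_B$ you would need some fixed pair $(h_0,N_0)$ with $\chi(g,h_0,N_0)=B$ for every $g\in F_B$; to contradict $F_C$ you would need some $(g_0,N_0)$ with $\chi(g_0,h,N_0)=C$ for every $h\in F_C$. No elementary pigeonhole on a product set yields this dichotomy: already with $|F_B|=|F_C|=2$ the coloring $\chi(g_i,h_j,N)=B\Leftrightarrow i=j$ defeats both alternatives simultaneously, for every $N$. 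Nothing in your outline explains why the coloring coming from the partition must avoid such patterns, and taking ``enough breakpoints $N$'' cannot help since the obstruction is present at each $N$ separately. There is a second, structural problem: $K$ must be fixed \emph{before} you apply the J-set property and obtain $(m,a,t)$, so you cannot prevent $t_1>K$; in that case every threshold is $j=0$, the elements $x(m,a,t,g*_Nh)$ are independent of $g$ altogether, and there is nothing on the $g$-side to absorb. The auxiliary lemma you invoke only pushes $t(1)$ \emph{upward}, which aggravates this rather than curing it.

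The paper supplies the missing combinatorial strength via the Hales--Jewett theorem. It sets $H=F\cup G$ (with $G$ witnessing that one color fails), takes $N=N(|H|,2)$ from Hales--Jewett, and for each word $w\in\{1,\dots,|H|\}^N$ builds a single sequence $g_w(l)=\prod_{i=1}^N d\cdot h_{w_i}(Nl+i)$. After applying the J-set property of $A\cup B$ to $\{g_w:w\}$ and $2$-coloring words by which piece $x(p,b,s,g_w)$ lands in, Hales--Jewett returns a monochromatic combinatorial line $\{w(i):i=1,\dots,|H|\}$. The crucial algebraic claim is that the elements $x(p,b,s,g_{w(i)})$ can all be rewritten as $x(m,a,t,h_i)$ for a \emph{single} triple $(m,a,t)$, uniformly in $i$. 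That uniform-in-$H$, single-color conclusion is precisely what your single-breakpoint interleaving plus pigeonhole cannot deliver, and it is why the paper's argument succeeds.
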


The proof of Theorem \ref{Jreg} that we present here is completely standard but we choose to give it as:  (a) it is very clever; and (b) the other proofs in the literature that we have seen have chosen to focus more on the details then the basic ideas.  The proof uses the Hales-Jewett theorem, which we now describe.  

Suppose that $A$ is a finite nonempty set (our \emph{alphabet}).  A \emph{word on $A$} is simply an element of $A^n$ for some $n\in \bb N$; we refer to $n$ as the \emph{length of the word}.  A \emph{variable word on $A$} is a word on the alphabet $A\cup \{\star\}$, where $\star$ is a new element not belonging to $A$, such that $\star$ actually occurs in $w$.  Given a variable word $w(\star)$ and $a\in A$, we set $w(a)$ to be the word on $A$ obtained by replacing each occurrence of $\star$ by $a$.  Finally, given a variable word $w(\star)$, the set $\{w(a) \ : \ a\in A\}$ is referred to as a \emph{combinatorial line}.

\begin{fact}[Hales-Jewett Theorem]
For every $k,c\in \bbN$, there is $N=N(k,c)\in \bbN$ such that, for every set $A$ of size $k$ and every coloring of words on $A$ of length $N$ using $c$ colors, there is a length $N$ variable word $w(\star)$ on $A$ such that the combinatorial line $\{w(a) \ : \ a\in A\}$ is monochromatic.
\end{fact}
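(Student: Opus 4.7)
The plan is to prove the Hales--Jewett theorem by induction on the alphabet size $k$, following the classical ``focusing'' construction of Hales and Jewett. The base case $k = 1$ is immediate: with $A = \{a\}$, every variable word $w(\star)$ of length $1$ determines a singleton combinatorial line, which is trivially monochromatic, so $N(1,c) = 1$ works.

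For the inductive step, assuming $N(k, c)$ exists for every $c$, I would fix two distinguished elements $a_0, a_1$ of an alphabet $A$ with $|A| = k+1$ and define $N$ as a sum $N_1 + N_2 + \cdots + N_c$ of carefully chosen block sizes, so that words of length $N$ are viewed as concatenations of $c$ blocks of lengths $N_1, \ldots, N_c$. Working from the last block backwards, each $N_j$ is chosen just large enough that $N(k, c_j)$ applies, where $c_j$ is a color count that codes all possible colorings of the subsequent blocks. The focusing construction will produce, stage by stage, variable words $w_1, \ldots, w_c$ supported on disjoint blocks with the property that they are ``focused'' at a common word $v$ (meaning $w_i(a_0)$ agrees with $v$ on block $i$), while the substitutions $w_i(a_1)$ produce $c$ words whose colors are pairwise distinct and different from the color of $v$.

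The construction of $w_{j+1}$ from $w_1, \ldots, w_j$ proceeds by applying the inductive hypothesis to a $c$-coloring of length-$N_{j+1}$ words over the reduced alphabet $A \setminus \{a_1\}$ (of size $k$): assign to each such word $u$ the color received by the full length-$N$ word obtained by placing $u$ in block $j+1$ and the already-constructed focus data in the other blocks. The inductive hypothesis produces a monochromatic combinatorial line on this $k$-element alphabet, which lifts to a variable word $w_{j+1}$ on the full alphabet $A$ by allowing substitution by $a_1$ at the $\star$ positions. If at any stage the color of the $a_1$-substitution matches one of the previously accumulated colors or the focus color, pigeonhole produces a single variable word whose combinatorial line is monochromatic on the pair $\{a_0, a_1\}$, and one further appeal to the inductive hypothesis at alphabet size $k$ promotes this to monochromaticity on all of $A$. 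At stage $j = c$ this pigeonhole must trigger, since $c+1$ words cannot all receive distinct colors under a $c$-coloring.

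The main obstacle will be the careful choice of the block sizes $N_j$ --- each must be large enough for the inductive hypothesis to be invoked with a color count that encodes all possible colorings of the later blocks --- together with the bookkeeping needed to verify that the pigeonhole step genuinely produces a variable word whose complete combinatorial line on $A$ is monochromatic, rather than just the two-point restriction $\{w(a_0), w(a_1)\}$. The resulting bound on $N(k+1, c)$ is Ackermann-type in $k$; Shelah's reorganization of the argument replaces direct focusing with an iterated cube construction and yields primitive recursive bounds, but either version suffices for the applications that follow.
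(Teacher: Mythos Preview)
The paper does not prove the Hales--Jewett theorem; it is stated as a \textbf{Fact}, and the subsequent remark defers to external references (a nonstandard proof of the infinitary version in \cite{book}, from which the finitary statement follows by a routine compactness argument).  There is therefore no in-paper proof to compare your attempt against.

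Your outline is the classical color-focusing induction on $k$, and the overall architecture --- block decomposition with sizes determined recursively via the inductive hypothesis, extraction in each block of a variable word whose $k$-letter line is monochromatic, and a pigeonhole on the extra letter $a_1$ over $c$ stages --- is the right one.  One passage is imprecise enough to flag.  When pigeonhole fires because two of the tracked $a_1$-colors coincide, you have so far only produced two \emph{points} of the same color, not a monochromatic line; the phrase ``one further appeal to the inductive hypothesis at alphabet size $k$ promotes this to monochromaticity on all of $A$'' does not describe what actually closes the argument.  What one does instead is amalgamate the two associated variable words into a single variable word on $A$ (stars in the union of their star-blocks, remaining blocks filled by the focus data) and then check, using the nested way the $\tau_i$ were constructed, that all $k+1$ substitutions receive the same color.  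That amalgamation step --- not a fresh invocation of the inductive hypothesis --- is precisely where the focusing condition $w_i(a_0)=v$ earns its keep, and it is the step most often elided in informal sketches of this proof.
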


\begin{remark}
The previous theorem is actually known as the finitary Hales-Jewett Theorem, which can be derived, using a familiar compactness argument, from the infinitary Hales-Jewett Theorem.  For a nonstandard proof of the latter fact, using many of the ideas present in this note, see \cite[Chapter 8, Section 2]{book}.  We should also mention that it is quite easy to derive the infinitary Hales-Jewett Theorem from the Central Sets Theorem.  For this reason, the direct approach to proving Step 1 is preferable in that it avoids any circular reasoning.
\end{remark}

\begin{proof}[Proof of Theorem \ref{Jreg}]
Suppose that $A,B\subseteq S$ are such that $A\cup B$ is a J-set but $A$ is not a J-set.  We show that $B$ is a J-set.  Fix $F\in \cal P_f(S^\bb N)$.  We find $m,a\in S^{m+1}$, and $t\in \bb N^{[m]}$ such that $x(m,a,t,f)\in B$ for all $f\in F$.

Let $G\in \cal P_f(S^\bb N)$ witness that $A$ is not a J-set.  Set $H:=F\cup G$ and write $H:=\{h_1,\ldots,h_k\}$.  Let $N:=N(k,2)$ be as in the Hales-Jewett theorem.  Below we will define, for $w\in \{1,\ldots,k\}^N$, elements $g_w\in S^\bb N$.  Since $A\cup B$ is a J-set, there are $p,b\in S^{p+1}$, and $s\in \bb N^{[p]}$ such that $x(p,b,s,g_w)\in A\cup B$ for all $w\in \{1,\ldots,k\}^N$.

Define a coloring on elements of $\{1,\ldots,k\}^N$ by setting $c(w)$ red if $x(p,b,s,g_w)\in A$ and $c(w)$ blue otherwise.  By the choice of $N$, there is a variable word $w(\star)$ on $\{1,\ldots,k\}$ of length $N$ such that the combinatorial line $\{w(i) \ : \ i=1,\ldots,k\}$ is monochromatic.

\textbf{Claim:}  There are $m,a\in S^{m+1}$, and $t\in \bb N^{[m]}$ such that, for each $i=1,\ldots,k$, we have
$$x(m,a,t,h_i)=x(p,b,s,g_{w(i)}).$$

Taking the claim for granted, we see that the monochromatic combinatorial line cannot have color red, else we contradict the choice of $G$.  It follows that the monochromatic combinatorial line has color blue, which implies, in particular, that $x(m,a,t,f)\in B$ for all $f\in F$, as desired.

It remains to describe the elements $g_w$ and verify the claim for these elements.  Fix arbitrary $d\in S$ arbitrarily.  (We will soon see that $d$ is merely a ``space-filler''.)  For $w=(w_1,\ldots,w_N)\in \{1,\ldots,k\}^N$, we set
$$g_w(l):=\prod_{i=1}^N d\cdot h_{w_i}(Nl+i).$$  To verify the claim, let $v_1<\cdots<v_r$ enumerate (in order) the appearances of $\star$ in $w(\star)$.  For $*=1,\ldots,k$, we have $p\cdot r$ many appearances of $h_{\star}$ in $x(p,b,s,g_{w(\star)})$, with inputs
$$Ns_1+v_1<\cdots<Ns_1+v_r<\cdots<Ns_p+v_1<\cdots <Ns_p+v_r.$$  (There may be other, incidental, appearances of a given element of $H$, but we want something uniform in $\star$.)  We set $m:=p\cdot r$ and let the above sequence be $t$.   The ``padding'' in the aforementioned product is then our desired $a$.  Note that $d$ is used in case of consecutive appearances of $\star$.  We leave it to the reader to write down precise formulae if they desire; otherwise, they can consult \cite[Thm 2.5]{johnson}.
\end{proof}

\begin{cor}
There is a J-element in $S^*$.  In fact, $A$ is a J-set if and only if $A^*$ contains a J-element.
\end{cor}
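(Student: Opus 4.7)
The plan is to use the partition regularity just established in Theorem \ref{Jreg} to manufacture an ultrafilter consisting entirely of J-sets, and then translate to the nonstandard picture via a generator.

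First I would note the auxiliary fact that the collection $\mathcal{J}$ of J-sets is closed under supersets: if $A \subseteq B$ and $A$ is a J-set, the same data $m,a,t$ witnessing $x(m,a,t,f)\in A$ witnesses $x(m,a,t,f)\in B$. Combined with Theorem \ref{Jreg}, $\mathcal{J}$ is a partition regular family closed under supersets.

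Next I would show the following standard consequence of partition regularity: for every J-set $A$, there is an ultrafilter $\mathcal{U} \in \beta S$ with $A \in \mathcal{U}$ and $\mathcal{U} \subseteq \mathcal{J}$. To produce such a $\mathcal{U}$, consider the family
$$\mathcal{H} := \{A\} \cup \{S \setminus B \ : \ B \subseteq S,\ B \notin \mathcal{J}\}.$$
I would verify that $\mathcal{H}$ has the finite intersection property. Given non-J-sets $B_1,\ldots,B_n$, one has the decomposition
$$A = \Bigl(A \setminus \bigcup_{i=1}^n B_i\Bigr) \cup \bigcup_{i=1}^n (A \cap B_i).$$
Each $A \cap B_i$ is a subset of the non-J-set $B_i$, hence (by the superset-closure property applied contrapositively) is itself not a J-set. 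Since $A \in \mathcal{J}$ and $\mathcal{J}$ is partition regular, the remaining piece $A \setminus \bigcup_i B_i$ must be a J-set, in particular nonempty, which gives the desired FIP. Extend $\mathcal{H}$ to any ultrafilter $\mathcal{U}$; by construction $A \in \mathcal{U}$, and for any $B \notin \mathcal{J}$ we have $S \setminus B \in \mathcal{U}$ hence $B \notin \mathcal{U}$, so $\mathcal{U} \subseteq \mathcal{J}$.

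Finally I would translate to the nonstandard setting. Assuming sufficient saturation, pick $\alpha \in \bigcap_{B \in \mathcal{U}} B^*$, so that $\mathcal{U}_\alpha = \mathcal{U}$. Then $\alpha \in A^*$ and every element of $\mathcal{U}_\alpha$ is a J-set, so $\alpha$ is a J-element lying in $A^*$. Applied to the trivial J-set $A = S$, this yields the existence assertion. The converse direction of the ``in fact'' clause is immediate: if $\alpha \in A^*$ is a J-element, then $A \in \mathcal{U}_\alpha$, so $A$ is a J-set by definition of J-element.

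The only non-routine step is the FIP verification, and even there the obstacle is minor once one notices that the ``bad'' pieces $A \cap B_i$ are forced out of $\mathcal{J}$ by superset-closure, leaving partition regularity to do the rest.
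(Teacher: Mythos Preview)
Your proof is correct and follows essentially the same route as the paper: both arguments form the family of complements of non-J-sets, use partition regularity (plus superset closure) to establish the finite intersection property, and then pick a generator. The only difference is organizational---you fold the ``moreover'' into the main construction by adjoining $A$ to the family from the outset, whereas the paper first produces a J-element and then notes separately that every complement of a non-J-set meets every J-set.
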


\begin{proof}
Let $\cal J:=\{A\subseteq S \ : \ A^c \text{ is not a J-set}\}$.  By the partition regularity of the collection of J-sets, we see that $\cal J$ has the finite intersection property, whence there is $\alpha \in \bigcap_{A\in \cal J} A^*$.  It follows that $\alpha$ is a J-element.  The moreover follows from the fact that every element of $\cal J$ meets every J-set.
\end{proof}

We now deal with Step 1b:

\begin{prop}
The set of J-elements is a u-ideal of $S^*$.
\end{prop}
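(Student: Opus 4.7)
The plan is to show that if $\beta$ is a J-element and $\alpha\in S^*$ is arbitrary, then both $\alpha\cdot \beta^*$ and $\beta\cdot \alpha^*$ are J-elements. Using the identity $\cU_\alpha\cdot \cU_\beta=\cU_{\alpha\cdot \beta^*}$, this amounts to showing that every member of $\cU_\alpha\cdot \cU_\beta$ (and of $\cU_\beta\cdot \cU_\alpha$) is a J-set. Given such a set $A$, the definition of multiplication of ultrafilters produces, in the two cases respectively, either some $s\in S$ with $s^{-1}A\in \cU_\beta$, or the set $B=\{s\in S\ :\ s\alpha\in A^*\}$ lying in $\cU_\beta$; since $\beta$ is a J-element, the relevant set ($s^{-1}A$ or $B$) is itself a J-set, and the task in each case reduces to transferring this J-status to $A$.

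The left case is essentially bookkeeping. Given $F\in \cal P_f(S^\bb N)$, pick $m\in\bb N$, $a\in S^{m+1}$, and $t\in \bb N^{[m]}$ witnessing that $s^{-1}A$ is a J-set relative to $F$. Replacing $a(1)$ by $s\cdot a(1)$ (and leaving the other entries untouched) produces a new tuple $a'\in S^{m+1}$ with $x(m,a',t,f)=s\cdot x(m,a,t,f)\in A$ for all $f\in F$, so $A$ is a J-set.

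The right case is where the real work lies, and is the step I expect to be the main obstacle. Given $F=\{f_1,\ldots,f_k\}\in \cal P_f(S^\bb N)$, choose $m,a,t$ so that $x(m,a,t,f_i)\cdot \alpha\in A^*$ for each $i$, which is possible since $B$ is a J-set. The naive move would be to simply append $\alpha$ at the end of the product, but the definition of J-set demands standard tuples in $S^{m+1}$, not tuples with a nonstandard last entry. To descend, I would apply transfer to the first-order statement ``there exists $y\in S$ such that $x(m,a,t,f_i)\cdot y\in A$ for every $i=1,\ldots,k$'', which holds in $S^*$ with witness $y=\alpha$; hence some genuine $y\in S$ works, and replacing $a(m+1)$ by $a(m+1)\cdot y$ yields a tuple $a''\in S^{m+1}$ witnessing that $A$ is a J-set relative to $F$. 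The key point enabling the transfer is that only finitely many conditions (indexed by $F$) need to hold simultaneously, which is precisely the level of uniformity the definition of J-set demands.
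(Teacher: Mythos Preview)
Your proof is correct and matches the paper's argument essentially line for line; the only difference is notational, since the paper names the J-element $\alpha$ rather than $\beta$, so your ``left'' and ``right'' cases correspond respectively to the paper's $\beta\alpha^*$ and $\alpha\beta^*$ cases. In particular, your transfer step in the harder case (appending a standard $y$ in place of the nonstandard witness and absorbing it into $a(m{+}1)$) is exactly what the paper does.
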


\begin{proof}
Consider $\alpha,\beta\in S^*$ with $\alpha$ a J-element.

$\alpha\beta^*$ is a J-element:  Suppose that $\alpha\beta^{*}\in A^{**}$.  Then $\alpha\in A_\beta^*$, so $A_\beta$ is a J-set.  Fix $F\subseteq \cal P_f(S^\bb N)$ and take $m,a,t$ such that $x(m,a,t,f)\in A_\beta$ for all $f\in F$, that is, $x(m,a,t,f)\beta \in A^*$ for all $f\in F$, whence there is $s\in S$ such that $x(m,a,t,f)s\in A$ for all $f\in F$.  Taking $a'\in S^{m+1}$ to agree with $a$ except that $a'(m+1):=a(m+1)\cdot s$, we see that $x(m,a',t,f)\in A$ for all $f\in F$.

$\beta\alpha^*$ is a J-element:  This is much easier.  Suppose $\beta\alpha^*\in A^{**}$.  Then $\beta\in A_\alpha^*$, so, by transfer, there is $s\in A_\alpha$, that is, $s^{-1}A$ is a J-set.  It follows easily that $A$ is also a J-set.
%
\end{proof}

This completes the proof of Step 1.  We now move on to Step 2.

\begin{thm}
Suppose that $\alpha\in S^*$ is an idempotent J-element.  Then $\alpha$ is a C-element.
\end{thm}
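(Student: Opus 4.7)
The plan is to construct the witnessing data $m,a,t$ by transfinite recursion on $\cal P_f(S^\bb N)$, using a well-ordering $\prec$ refining $\subsetneq$ (for instance, sort by cardinality and break ties arbitrarily, so that $G\subsetneq F$ forces $G\prec F$). Fix $A\in \cU_\alpha$; we show $A$ is a $C$-set. Two features of $\alpha$ drive the construction. The idempotence lemma ensures that whenever $s\in \aA$ we have $s\alpha\in \aA^*$, hence $p^{-1}\aA\in \cU_\alpha$ whenever $p\in \aA$. The $J$-element property together with the shift lemma for $J$-sets says that every set in $\cU_\alpha$ admits $m,a,t$ with $t(1)$ as large as desired and with $x(m,a,t,f)$ landing in the set simultaneously for every $f$ in any prescribed finite family of sequences.

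The inductive hypothesis we carry along is a strengthening of (b): for every chain $G_1\subsetneq\cdots\subsetneq G_n$ with $G_n\prec F$ and every choice of $g_i\in G_i$, the product $\prod_{i=1}^n x(m(G_i),a(G_i),t(G_i),g_i)$ lies in $\aA$ (not merely in $A$). At stage $F$, the only chains relevant to defining the new values are those whose top is properly contained in $F$; since $F$ is finite, the chains of proper subsets of $F$ are finite in number and hence there are only finitely many prior products $p_1,\dots,p_L$. Each $p_j\in \aA$, so $p_j^{-1}\aA\in \cU_\alpha$, and the finite intersection
\[
B_F \;:=\; \aA \,\cap\, \bigcap_{j=1}^L p_j^{-1}\aA
\]
lies in $\cU_\alpha$.

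Applying the $J$-element property (with the shift lemma) to $B_F$, we choose $m(F)$, $a(F)$, and $t(F)$ such that $t(F)(1)$ exceeds the finite quantity $\max\{t(G)(m(G)) : G\prec F\}$ and $x(m(F),a(F),t(F),f)\in B_F$ for every $f\in F$. The lower bound on $t(F)(1)$ gives condition (a). For condition (b), any new chain ending at $F$ has the shape $G_1\subsetneq\cdots\subsetneq G_{n-1}\subsetneq F$ with entries $g_i\in G_i$ and $f\in F$, and its product equals $p_j\cdot x(m(F),a(F),t(F),f)$ for the corresponding prior product $p_j$; since $x(m(F),a(F),t(F),f)\in p_j^{-1}\aA$, this product lies in $\aA\subseteq A$, preserving both (b) and the strengthened inductive hypothesis.

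The main obstacle is organisational rather than mathematical: one must verify that at stage $F$ only finitely many prior products need to be considered (because they come from chains among proper subsets of the finite set $F$), and one must phrase the inductive hypothesis strongly enough---products landing in $\aA$ rather than merely in $A$---that the idempotence lemma can be re-applied at the following stage. Once this book-keeping is arranged, the two standing tools do all the work: idempotence turns each shift $p^{-1}A$ into a member of $\cU_\alpha$, while the $J$-element property converts membership in $\cU_\alpha$ into the existence of a universal $x(m,a,t,\cdot)$.
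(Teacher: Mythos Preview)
Your argument is correct and follows essentially the same route as the paper's proof: recursion on $\cal P_f(S^{\bb N})$ (the paper recurses on $|F|$, which amounts to your well-ordering), with the strengthened inductive hypothesis that chain-products land in $\aA$, and at each stage defining $B_F=\aA\cap\bigcap_j p_j^{-1}\aA$ (the paper writes this as $\{x\in\aA:M\cdot x\subseteq\aA\}$) and applying the J-set property with a shifted $t$. One small slip: you write $\max\{t(G)(m(G)):G\prec F\}$, but there are in general infinitely many $G\prec F$, so this maximum need not exist; what condition (a) actually requires, and what you should take, is $\max\{t(G)(m(G)):\emptyset\neq G\subsetneq F\}$, which is finite since $F$ is.
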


\begin{proof}
Suppose $\alpha\in A^*$; we need to show that $A$ is a C-set, which we accomplish by constructing, by recursion on the size of $F$, functions $m(F)$, $a(F)$, and $t(F)$ satisfying:
\begin{enumerate}
\item[(i)] for all $\emptyset\not=G\subsetneq F$, we have $t(G)(m(G))<t(F)(1)$; and
\item[(ii)] for all $G_1\subsetneq\cdots\subsetneq G_n=F$ and $g_i\in G_i$, we have
$$\prod_{i=1}^n x(m(G_i),\alpha(G_i),t(G_i),g_i)\in \aA.$$
\end{enumerate} 
For $F=\{f\}$, we simply take $m(F)$, $a(F)$, and $t(F)$ witnessing that $\aA$ is a J-set for $F$, which follows from the fact that $\alpha\in \aA^*$ and that $\alpha$ is a J-element.

Now suppose that $m(G)$, $a(G)$, and $t(G)$ have been defined for all proper subsets of $F$ satisfying (i) and (ii).  Let $k=\max_{\emptyset\not=G\subsetneq F}t(G)(m(G))$ and let 
$$M:=\left\{\prod_{i=1}^n x(m(G_i),a(G_i),t(G_i),g_i) \ : \ \emptyset \not=G_1\subsetneq \cdots \subsetneq G_n\subsetneq F, \ g_i\in G_i\right\}.$$  Since $M$ is a finite subset of $\aA$, we have that $M\cdot \alpha\subseteq \aA^*$.  Setting $B:=\{x\in \aA \ : \ M\cdot x\subseteq \aA\}$, we have that $\alpha\in B^*$, whence $B$ is a J-set.  We then let $m(F)$, $a(F)$, and $t(F)>k$ be as in the definition of J-set for $B$ corresponding to $F$.  It is clear that items (i) and (ii) of the recursion are still satisfied.  
\end{proof}

This completes the proof of the Central Sets Theorem.

\section{The Dynamic Definition}

In this section, we give the dynamic definition of central set and prove the equivalence with the earlier version.
\subsection{Dynamic preliminaries}

We start with some definitions.

\begin{defn}
A \textbf{dynamical system} is a pair $(X,\langle T_s\rangle_{s\in S})$ such that:
\begin{enumerate}
\item $X$ is a compact space;
\item $S$ is a semigroup;
\item for $s\in S$, $T_s:X\to X$ is continuous;
\item for $s,t\in T$, we have $T_{st}=T_s\circ T_t$.
\end{enumerate}
\end{defn}

Given a dynamical system as above, $s\in S$, and $x\in X$, we sometimes write $s\cdot x$ instead of $T_s(x)$.  We also let $T:S\to X^X$ denote the function $T(s)(x):=T_s(x)$.

\begin{remark}
Note that the natural left action of $S$ on $\beta S$ yields a dynamical system in the above sense.  In this way, closed subsystems of $\beta S$ correspond to left ideals.  In topological dynamics, studying minimal closed subsystems is natural as they correspond to the irreducible objects.  Minimal closed subsystems of $\beta S$ thus correspond to minimal left ideals.  It might have seemed strange at first to be so concerned with minimal left ideals, but we see now that they are a very natural object of study from the dynamic point of view.
\end{remark}

The following lemma is standard and easy:

\begin{lem}\label{Bernoulli}
Suppose that $Q$ is a semigroup with subsemigroup $S$.  Let $X:=2^Q$ (with the product topology).  For $s\in S$, set $T_s(f)(x):=f(xs)$.  Then $(X,\langle T_s\rangle_{s\in S})$ is a dynamical system.
\end{lem}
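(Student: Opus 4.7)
The plan is to verify the four axioms of a dynamical system in turn, with conditions (1) and (2) being essentially free and the content lying in (3) and (4).

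For condition (1), $X=2^Q$ is a product of compact spaces, hence compact by Tychonoff. Condition (2) is part of the hypothesis.

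For condition (4), the identity $T_{st}=T_s\circ T_t$ follows by unwinding definitions and invoking associativity in $Q$: for any $f\in X$ and $x\in Q$,
\[
T_{st}(f)(x)=f(x(st))=f((xs)t)=T_t(f)(xs)=T_s(T_t(f))(x).
\]

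For condition (3), I would argue continuity of $T_s$ directly from the definition of the product topology on $2^Q$. A subbasic open set in $X$ has the form $U_{x,i}:=\{f\in X : f(x)=i\}$ for $x\in Q$ and $i\in\{0,1\}$. Then $T_s^{-1}(U_{x,i})=\{f : f(xs)=i\}=U_{xs,i}$ is again subbasic open, so $T_s$ is continuous. This completes the verification.

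The main ``obstacle'' is purely bookkeeping: one has to be careful that the definition $T_s(f)(x)=f(xs)$ uses right multiplication in $Q$, which is precisely what makes the semigroup action a left action (so that $T_{st}=T_s\circ T_t$ rather than $T_t\circ T_s$). No subtlety beyond this point should arise.
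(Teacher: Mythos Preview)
Your verification is correct; the paper itself omits the proof entirely, labeling the lemma ``standard and easy,'' so your argument simply fills in the routine details that the author chose to skip.
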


Until further notice, fix a dynamical system $(X,\langle T_s\rangle_{s\in S})$.  Given $x\in X$ and a subset $U$ of $X$, we consider the \textbf{return set}
$$R(T,x,U):=R(x,U):=\{s\in S \ : \ T_s(x)\in U\}.$$

A focal point of topological dynamics is the study of various properties of return sets.  Here is a very natural definition along these lines:

\begin{defn}
We say that $x\in X$ is \textbf{uniformly recurrent} if, for every neighborhood $U$ of $x$, we have that $R(x,U)$ is syndetic.
\end{defn}

The previous nomenclature is easiest to digest when considering dynamical systems over $(\bb N,+)$, which is tantamount to studying the iterates of a single continuous transformation $T$.  In this case, $x\in X$ is uniformly recurrent if, for any neighborhood $U$ of $x$, there is $m\in \bb N$ such that, for any $y$ in the orbit of $x$, we have that $y$ returns to $U$ within $m$ iterates of $T$.

The study of uniformly recurrent points is also intimately tied up with minimal dynamical systems referred to above.  Indeed, one can show that every point in a minimal dynamical system is uniformly recurrent and, conversely, the orbit closure of a uniformly recurrent point is a minimal system.  (See \cite[Theorems 1.15 and 1.17]{Furst}.)

Here is the other preliminary definition we need:

\begin{defn}
We say that $x,y\in X$ are \textbf{proximal} if there is a net $(s_i)_{i\in I}$ such that $\lim_{i\in I}T_{s_i}(x)=\lim_{i\in I}T_{s_i}(y)$.
\end{defn}

We now come to the dynamic definition of central set, which we temporarily give a different name until we show that it coincides with our earlier notion of central set.

\begin{defn}
If $S$ is a semigroup, then $A\subseteq S$ is \textbf{dynamically central} if there is a dynamical system $(X,\langle T_s\rangle_{s\in S})$ and points $x,y\in X$ such that:
\begin{enumerate}
\item $x$ and $y$ are proximal;
\item $y$ is uniformly recurrent;
\item $A=R(x,U)$.
\end{enumerate}
\end{defn}


\subsection{The equivalence between central sets and dynamically central sets}  We now proceed to show that the notions ``central'' and ``dynamically central'' coincide.  As before, we fix a dynamical system $(X,\langle T_s\rangle_{s\in S})$.  Recall that every element $x\in X^*$ has a unique standard part, denoted $\st(x)\in X$, with the property that $x\approx \st(x)$ (meaning:  whenever $U$ is a neighborhood of $\st(x)$, then $x\in U^*$).  By iterated applications of transfer, this fact remains true for every $x\in X^{(\infty)*}$.  This allows us to define a function $\tT:S^*\to X^X$ by setting $$\tT(\alpha)(x):=\tT_\alpha(x):=\st(\alpha\cdot x)=\st(T_\alpha(x)).$$  Note that $\tT$ extends $T$.

\begin{lem}
For $\alpha,\beta\in S^{(\infty)*}$, we have $\alpha\sim \beta$ implies $\tT_\alpha=\tT_\beta$.
\end{lem}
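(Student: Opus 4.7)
The plan is to reduce the statement to the purely ultrafilter-theoretic fact $\cU_\alpha = \cU_\beta$ via a characterization of standard parts in terms of return sets. Fix $x \in X$; we must show $\st(\alpha \cdot x) = \st(\beta \cdot x)$, where $\alpha \in S^{(k)*}$ and $\beta \in S^{(k')*}$ for some levels $k, k'$ (possibly distinct), so that $\alpha \cdot x \in X^{(k)*}$ and $\beta \cdot x \in X^{(k')*}$ via the iterated transfer of the map $s \mapsto T_s(x) : S \to X$.

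The key observation is the following: for any open $U \subseteq X$, by iterated transfer applied to the definition $R(x,U) = \{s \in S : T_s(x) \in U\}$, we have
\[
\alpha \cdot x \in U^{(k)*} \ \Longleftrightarrow\  \alpha \in R(x,U)^{(k)*} \ \Longleftrightarrow\  R(x,U) \in \cU_\alpha,
\]
and likewise with $\beta$ in place of $\alpha$. Thus, for $p \in X$, we have $\alpha \cdot x \approx p$ if and only if $R(x,U) \in \cU_\alpha$ for every open neighborhood $U$ of $p$; by uniqueness of standard parts, this characterizes $\st(\alpha \cdot x)$ entirely in terms of the ultrafilter $\cU_\alpha$.

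Given this, the conclusion is immediate: set $p := \st(\alpha \cdot x)$. For every open neighborhood $U$ of $p$ we have $R(x,U) \in \cU_\alpha = \cU_\beta$, so $\beta \cdot x \in U^{(k')*}$. Hence $\beta \cdot x \approx p$ at the appropriate iterated level, and $\st(\beta \cdot x) = p = \st(\alpha \cdot x)$. Since $x \in X$ was arbitrary, $\tT_\alpha = \tT_\beta$.

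The only thing that requires some care, rather than outright obstacle, is justifying the displayed equivalence at iterated levels: one needs to apply transfer $k$ (resp.\ $k'$) times to the first-order statement ``for all $s \in S$, $T_s(x) \in U$ iff $s \in R(x,U)$.'' This is a routine instance of iterated transfer in the framework set up in Section 2, so the argument is short once that framework is in hand.
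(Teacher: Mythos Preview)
Your proof is correct and follows essentially the same approach as the paper's: both set $z=\st(\alpha\cdot x)$, note that for every neighborhood $U$ of $z$ the return set $R(x,U)=\{s\in S: s\cdot x\in U\}$ lies in $\cU_\alpha=\cU_\beta$, and conclude $\beta\cdot x\in U^{(k')*}$ for all such $U$, whence $\st(\beta\cdot x)=z$. The paper simply writes ``for ease of notation, suppose $\alpha,\beta\in S^*$'' and suppresses the iterated-level bookkeeping that you spell out explicitly.
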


\begin{proof}
For ease of notation, suppose $\alpha,\beta \in S^*$.  If $z=\st(\alpha\cdot x)$ and $U$ is a neighborhood of $z$, setting $A:=\{s\in S \ : \ s\cdot x\in U\}$, we have $\alpha\in A^*$, whence $\beta\in A^*$, that is $\beta\cdot x\in U^*$.  Since $U$ was an arbitrary neighborhood of $z$, we see that $z=\st(\beta\cdot x)$.
\end{proof}

As with the  map $\pi$, $\tT$ need not be a semigroup homomorphism.  However:
\begin{prop}
For any $\alpha,\beta\in S^*$, we have $\tT_{\alpha\cdot \beta^*}=\tT_\alpha\circ \tT_\beta$.
\end{prop}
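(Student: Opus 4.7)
The plan is to prove the equality $\tT_{\alpha\cdot\beta^*}(x)=\tT_\alpha(\tT_\beta(x))$ for each standard $x\in X$ by a direct computation using transfer at the $**$-level. First I would rewrite the left-hand side: by transferring the associativity of the action to $X^{**}$, we have $(\alpha\cdot\beta^*)\cdot x=\alpha\cdot(\beta^*\cdot x)$; moreover $\beta^*\cdot x=\beta^*\cdot x^*=(\beta\cdot x)^*$ inside $X^{**}$ (using $x^*=x$ for standard $x$ and applying $*$ to the definition of $\beta\cdot x\in X^*$). Thus $(\alpha\cdot\beta^*)\cdot x=\alpha\cdot(\beta\cdot x)^*$.

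Set $y:=\tT_\beta(x)=\st(\beta\cdot x)\in X$ and $z:=\tT_\alpha(y)=\st(\alpha\cdot y)\in X$; the goal becomes to show $\st(\alpha\cdot(\beta\cdot x)^*)=z$. Fix an open neighborhood $V$ of $z$ in $X$; it suffices to verify $\alpha\cdot(\beta\cdot x)^*\in V^{**}$. Let $A:=\{s\in S : s\cdot y\in V\}$. Since $\alpha\cdot y\in V^*$ (as $V$ is a neighborhood of $\st(\alpha\cdot y)$), transfer yields $\alpha\in A^*$.

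The main step is to verify the inclusion $A\cdot(\beta\cdot x)\subseteq V^*$. For any standard $s\in A$, continuity of $T_s$ makes $T_s^{-1}(V)$ an open subset of $X$, and $y\in T_s^{-1}(V)$ by definition of $A$; since $\st(\beta\cdot x)=y$, we conclude $\beta\cdot x\in (T_s^{-1}(V))^*=(T_s^*)^{-1}(V^*)$, i.e., $s\cdot(\beta\cdot x)\in V^*$. Applying the $*$-operation to the (now-verified) statement ``$\forall s\in A,\ s\cdot(\beta\cdot x)\in V^*$''---a sentence at the $*$-level whose standard parameters are $A$ and $V$ and whose $X^*$-parameter is $\beta\cdot x$---produces ``$\forall s\in A^*,\ s\cdot(\beta\cdot x)^*\in V^{**}$''. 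Instantiating at $s=\alpha\in A^*$ yields $\alpha\cdot(\beta\cdot x)^*\in V^{**}$, as required.

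The main thing to be careful about is the bookkeeping across iterated hyperextensions---in particular, keeping track that $(\beta\cdot x)^*$ is a genuinely new element of $X^{**}$ distinct from $\beta\cdot x\in X^*$, and that the final transfer step is legitimate because the sentence being transferred is first-order with all its parameters inside the $*$-universe. Continuity itself enters only through the innocuous fact that $T_s^{-1}(V)$ is open for each standard $s\in S$.
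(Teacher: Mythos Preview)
Your proof is correct and follows essentially the same approach as the paper's: both reduce to showing $\alpha\cdot(\beta^*\cdot x)\in U^{**}$ for each neighborhood $U$ of $z$, both use continuity of $T_s$ to obtain the $*$-level statement that $s\cdot(\beta\cdot x)\in V^*$ whenever $s\cdot y\in V$, and both finish by transferring this to the $**$-level and instantiating at $\alpha$. The only cosmetic difference is that the paper packages the transfer as a single sentence quantifying over both $s\in S$ and open $V$ (then specializes $V$ to $U^*$ afterward), whereas you fix the neighborhood $V$ first, introduce the return set $A=\{s:s\cdot y\in V\}$, and transfer the simpler sentence $\forall s\in A,\ s\cdot(\beta\cdot x)\in V^*$; your explicit identification $\beta^*\cdot x=(\beta\cdot x)^*$ is also something the paper leaves implicit.
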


\begin{proof}
Fix $x\in X$.  Since $(\alpha\beta^*)\cdot x=\alpha\cdot (\beta^*\cdot x)$, we need to show
$$\st(\alpha\cdot (\beta^*\cdot x))=\st(\alpha\cdot(\st(\beta\cdot x))).$$  Set $y:=\st(\beta\cdot x)$ and $z:=\st(\alpha\cdot y)$.   We need to show that $z=\st(\alpha\cdot (\beta^*\cdot x))$.  Fix a neighborhood $U$ of $z$; we must show that $\alpha\cdot (\beta^*\cdot x)\in U^{**}$.  Since each $T_s$ is continuous, the statement ``for all $s\in S$ and all open sets $V$, if $s\cdot y\in V$, then $s\cdot (\beta\cdot x)\in V^*$'' is a true statement.  By transfer, we have that ``for all $\gamma\in S^*$ and all internally open sets $V$, if $\gamma\cdot y\in V$, then $\gamma\cdot (\beta^*\cdot x)\in V^*$'' is also true.  We finish by setting $\gamma=\alpha$ and $V=U^*$. 
\end{proof}

We next give the nonstandard reformulation of proximality:
\begin{lem}
$x,y\in X$ are proximal if and only if there is $\alpha\in S^*$ such that $\tT_\alpha(x)=\tT_\alpha(y)$.
\end{lem}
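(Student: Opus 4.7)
The plan is to handle each direction separately, using saturation in one direction and transfer in the other, with both arguments routed through the nonstandard characterization that $\st(\alpha\cdot x) = z$ if and only if $\alpha \in R(x,U)^*$ for every neighborhood $U$ of $z$.

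For the forward ($\Rightarrow$) direction, I would suppose $x,y$ are proximal via a net $(s_i)_{i\in I}$ whose images under the $T_{s_i}$ converge (commonly) to some $z\in X$. First I would observe that the family $\{R(x,U)\cap R(y,U) : U \text{ a neighborhood of } z\}$ has the finite intersection property: the intersection of finitely many such sets corresponds to the intersection of finitely many neighborhoods of $z$, which is again a neighborhood $V$ of $z$, and by the convergence hypothesis $s_i \in R(x,V) \cap R(y,V)$ for all sufficiently large $i$. Saturation then yields some $\alpha \in \bigcap_{U \ni z}(R(x,U)\cap R(y,U))^*$; for any such $\alpha$, both $\alpha\cdot x$ and $\alpha\cdot y$ are infinitely close to $z$, so $\tT_\alpha(x) = z = \tT_\alpha(y)$.

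For the backward ($\Leftarrow$) direction, suppose $\tT_\alpha(x) = \tT_\alpha(y) =: z$. Then for every neighborhood $U$ of $z$ we have $\alpha\cdot x \in U^*$ and $\alpha\cdot y \in U^*$, i.e., $\alpha \in R(x,U)^* \cap R(y,U)^* = (R(x,U)\cap R(y,U))^*$; by transfer, $R(x,U)\cap R(y,U)$ is nonempty, so we may pick some $s_U$ in it. The resulting net $(s_U)$, indexed by the neighborhood filter of $z$ ordered by reverse inclusion, then witnesses proximality, since both $T_{s_U}(x)$ and $T_{s_U}(y)$ converge to $z$ by construction.

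There is really no main obstacle here---this is a clean nonstandard reformulation whose only moving parts are saturation in one direction and transfer in the other, entirely parallel in spirit to the nonstandard reformulation of an arbitrary net limit as the standard part of a hyperpoint. The only point requiring a moment of care is the finite intersection property check in the forward direction, which is immediate once one recalls that the neighborhood filter is closed under finite intersections.
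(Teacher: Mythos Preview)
Your proof is correct. The backward direction matches the paper's argument essentially verbatim: pick $s_U\in R(x,U)\cap R(y,U)$ by transfer and run the net over the neighborhood filter of $z$.

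For the forward direction the paper takes a slightly different route. Rather than verifying the finite intersection property for the family $\{R(x,U)\cap R(y,U)\}$ and invoking saturation, it works directly with the nonstandard extension of the index set: pick $i\in I^*$ with $i>I$ (i.e.\ $i$ dominates every standard index) and set $\alpha:=s_i$; the nonstandard characterization of net convergence gives $s_i\cdot x\approx z\approx s_i\cdot y$ at once, so $\tT_\alpha(x)=\tT_\alpha(y)$. Your approach avoids ever mentioning $I^*$ and instead routes everything through return sets, which is perhaps more in keeping with the rest of the note; the paper's approach is a one-liner that leans on the standard nonstandard picture of net limits. Both are routine and neither buys anything substantive over the other.
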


\begin{proof}
First suppose that $x$ and $y$ are proximal, say $\lim_{i\in I}T_{s_i}(x)=\lim T_{s_i}(y)$.  Fix $i\in I^*$ with $i>I$.  Then $T_{s_i}(x)=T_{s_i}(y)$.

Conversely, suppose that $\tT_\alpha(x)=\tT_\alpha(y)$ and let $z$ be the common standard part.  For each neighborhood $U$ of $\tT_\alpha(x)$, we have, by transfer, some $s_U\in S$ such that $s_U\cdot x,s_U\cdot y\in U$.  It follows that $\lim_U s_U\cdot x=\lim_U s_U\cdot y=z$.
\end{proof}

For $x,y\in X$, let $I(x,y):=\{\alpha\in S^* \ : \ \tT_\alpha(x)=\tT_\alpha(y)\}.$  Thus, $x$ and $y$ are proximal if and only if $I(x,y)\not=\emptyset$.

\begin{lem}
$I(x,y)$ is a left u-ideal of $S^*$.
\end{lem}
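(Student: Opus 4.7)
The plan is to directly verify the defining property of a left $u$-ideal. Fix $\alpha \in S^*$ and $\beta \in I(x,y)$; I need to produce $\gamma \in I(x,y)$ with $\gamma \sim \alpha \cdot \beta^*$.

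First, I would apply the functorial property $\tT_{\alpha \cdot \beta^*} = \tT_\alpha \circ \tT_\beta$ established in the preceding proposition. Evaluating at $x$ and at $y$, and using that $\tT_\beta(x) = \tT_\beta(y)$ (since $\beta \in I(x,y)$), I obtain
$$\tT_{\alpha \cdot \beta^*}(x) = \tT_\alpha(\tT_\beta(x)) = \tT_\alpha(\tT_\beta(y)) = \tT_{\alpha \cdot \beta^*}(y).$$

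Of course, $\alpha \cdot \beta^* \in S^{**}$ rather than $S^*$, so this calculation by itself does not place the witness inside $I(x,y) \subseteq S^*$. To fix this, I would invoke sufficient saturation to find a generator $\gamma \in S^*$ of the ultrafilter $\cU_{\alpha \cdot \beta^*}$, i.e., some $\gamma \in \bigcap_{A \in \cU_{\alpha \cdot \beta^*}} A^*$. By construction $\gamma \sim \alpha \cdot \beta^*$, and the earlier lemma (that $\sim$-equivalent elements have identical $\tT$-actions, which is proved on $S^{(\infty)*}$) gives $\tT_\gamma = \tT_{\alpha \cdot \beta^*}$. Hence $\tT_\gamma(x) = \tT_\gamma(y)$, so $\gamma \in I(x,y)$, as required.

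There is no real obstacle here: the two ingredients (the composition formula for $\tT$ and the $\sim$-invariance of $\tT$) were established precisely to make this kind of bookkeeping work, and the saturation step to descend from $S^{**}$ back to $S^*$ is the same move used elsewhere in the note (compare the proof of (2) $\Rightarrow$ (3) in Theorem \ref{pwsminimal}). If anything deserves care, it is simply observing that the composition formula applies to the pair $(\alpha, \beta)$ with both in $S^*$, yielding an element of $S^{**}$ whose $\tT$-action factors as claimed.
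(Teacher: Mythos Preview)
Your proof is correct and follows essentially the same route as the paper: the same chain $\tT_{\alpha\cdot\beta^*}(x)=\tT_\alpha(\tT_\beta(x))=\tT_\alpha(\tT_\beta(y))=\tT_{\alpha\cdot\beta^*}(y)$, followed by passing to a $\sim$-equivalent $\gamma\in S^*$ and invoking $\sim$-invariance of $\tT$. The only difference is that you spell out the saturation step to produce $\gamma\in S^*$, whereas the paper leaves this implicit.
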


\begin{proof}
Suppose that $\beta\in I(x,y)$ and $\alpha\in S^*$.  Then
$$\tT_{\alpha\cdot \beta^*}(x)=\tT_\alpha(\tT_{\beta}(x))=\tT_\alpha(\tT_{\beta}(y))=\tT_{\alpha\cdot \beta^*}(y).$$  Thus, if $\gamma\sim \alpha\cdot \beta^*$, we have $\tT_\gamma(x)=\tT_\gamma(y)$, whence $\gamma\in I(x,y)$.
\end{proof}

In the following proof, we will need one more fact about $K(\beta S)$, namely, for every minimal left ideal $L$ of $\beta S$ and every $\cU\in L$, there is an idempotent $\cal V\in L$ such that $\cU\cdot \cal V=\cU$.  (See \cite[Thm 1.61]{HS}.)

\begin{thm}
Suppose that $y\in X$ and $L$ is a minimal left u-ideal of $S^*$.  Then the following are equivalent:
\begin{enumerate}
\item $y$ is uniformly recurrent.
\item There is $\alpha\in L$ such that $\tT_\alpha(y)=y$.
\item There is idempotent $\alpha\in L$ such that $\tT_\alpha(y)=y$.
\item There is idempotent $\alpha\in L$ and $x\in X$ such that $\tT_\alpha(x)=y$.
\end{enumerate}
\end{thm}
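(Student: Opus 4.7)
The plan is to cycle $(3) \Rightarrow (4) \Rightarrow (2) \Rightarrow (1) \Rightarrow (3)$. The first two implications are essentially free: $(3) \Rightarrow (4)$ takes $x := y$, and for $(4) \Rightarrow (2)$ idempotency of $\alpha$ together with the functorial identity $\tT_{\alpha\cdot\alpha^*} = \tT_\alpha \circ \tT_\alpha$ from the preceding proposition yields
$$\tT_\alpha(y) = \tT_\alpha(\tT_\alpha(x)) = \tT_{\alpha\cdot\alpha^*}(x) = \tT_\alpha(x) = y.$$

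For $(2) \Rightarrow (1)$, suppose $\alpha \in L$ (hence minimal) satisfies $\tT_\alpha(y) = y$. Given a neighborhood $U$ of $y$, I choose an open $V \ni y$ with $\overline V \subseteq U$, using regularity of the compact Hausdorff space $X$. Since $T_\alpha(y) \approx y$, we have $\alpha \in R(y, V)^*$, so Theorem \ref{pwsminimal}(2) tells us that $R(y, V)_\alpha$ is syndetic. The key observation is then $R(y, V)_\alpha \subseteq R(y, U)$: if $s \in R(y, V)_\alpha$, then $T_s(T_\alpha(y)) = T_{s\alpha}(y) \in V^*$, while continuity of $T_s$ combined with $T_\alpha(y) \approx y$ gives $T_s(T_\alpha(y)) \approx T_s(y)$, forcing $T_s(y) \in \overline V \subseteq U$. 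Hence $R(y, U)$ is syndetic.

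The main obstacle is $(1) \Rightarrow (3)$. The strategy is to set
$$E := \{\beta \in L : \tT_\beta(y) = y\} = L \cap \bigcap_{U \ni y} R(y, U)^*$$
and show that $E$ is a nonempty closed u-subsemigroup of $S^*$; the corollary that every nonempty closed u-subsemigroup contains an idempotent will then supply an idempotent in $E \subseteq L$, yielding (3). Closedness is built into the formula. The u-subsemigroup property follows from the identity $\tT_{\beta_1\cdot\beta_2^*} = \tT_{\beta_1} \circ \tT_{\beta_2}$, which gives $\tT_{\beta_1\beta_2^*}(y) = y$ for $\beta_1, \beta_2 \in E$; the left u-ideal property of $L$ then supplies $\gamma \in L$ with $\gamma \sim \beta_1 \beta_2^*$, and this $\gamma$ lies in $E$ since $\tT$ is $\sim$-invariant.

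The delicate point is nonemptiness. Picking any $\alpha_0 \in L$, for each neighborhood $U$ of $y$ the syndeticity of $R(y, U)$ gives finite $G \subseteq S$ with $S^* \subseteq G^{-1} R(y, U)^*$, so some $g \in G$ satisfies $g\alpha_0 \in R(y, U)^*$. A direct computation from the definition of the product on $\beta S$ shows $g\alpha_0 \sim g\alpha_0^*$, and the left u-ideal property of $L$ supplies $\gamma \in L$ with $\gamma \sim g\alpha_0$; since membership in $R(y, U)^*$ depends only on the $\sim$-class, $\gamma \in L \cap R(y, U)^*$. The family $\{L \cap R(y, U)^*\}_U$ thus has the finite intersection property and consists of closed subsets of the compact space $L$, so the total intersection $E$ is nonempty.
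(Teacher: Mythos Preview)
Your proof is correct and takes a genuinely different route through the equivalences than the paper does.  The paper cycles $(1)\Rightarrow(2)\Rightarrow(3)\Rightarrow(4)\Rightarrow(1)$, while you cycle $(3)\Rightarrow(4)\Rightarrow(2)\Rightarrow(1)\Rightarrow(3)$.  Your $(4)\Rightarrow(2)$ and $(2)\Rightarrow(1)$ together reproduce exactly the paper's $(4)\Rightarrow(1)$, so the real divergence is in how $(1)$ leads to $(3)$.  The paper first proves $(1)\Rightarrow(2)$ by a slick transfer trick: it picks an \emph{internal} open neighborhood $U$ of $y$ lying inside the monad of $y$, transfers uniform recurrence to get that the internal return set is internally syndetic, and extracts a single $\alpha\in L$ with $\alpha\cdot y\in U$, hence $\tT_\alpha(y)=y$.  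It then upgrades $(2)$ to $(3)$ by invoking the structural fact (quoted just before the theorem) that in any minimal left ideal of $\beta S$ every element is fixed by some idempotent of that ideal.  Your argument instead bundles both steps into one: you show directly that $E=\{\beta\in L:\tT_\beta(y)=y\}$ is a nonempty closed u-subsemigroup and invoke the Ellis-type corollary.  This buys you independence from the extra structure theorem about $K(\beta S)$, at the cost of a compactness/FIP argument in place of the paper's one-shot internal-neighborhood trick.

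One small caveat: you assert that $L$ is compact when running the FIP argument, but the paper's definition of minimal left u-ideal does not require $L$ to be $\sim$-saturated, hence $L$ need not be closed in $S^*$.  The fix is routine: run your argument with the full ideal $\bar L:=\pi^{-1}(\pi(L))$, which is closed since $\pi(L)$ is a (closed) minimal left ideal of $\beta S$; this yields an idempotent $\beta\in\bar L$ with $\tT_\beta(y)=y$, and any $\alpha\in L$ with $\alpha\sim\beta$ is then an idempotent in $L$ with $\tT_\alpha(y)=y$.
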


\begin{proof}
(1) implies (2):  Fix $\beta \in L$.  Let $U$ be an internally open neighborhood of $y$ contained in the monad of $y$, that is, every element of $U$ is infinitely close to $y$.  Since $y$ is uniformly recurrent, by transfer, $R(y,U):=\{\gamma\in S^* \ : \ \gamma\cdot y\in U\}$ is internally syndetic, that is, $S^{**}\subseteq (S^*)^{-1}R(y,U)^*$.  Thus, there is $\gamma\in S^*$ such that $\gamma\cdot \beta^*\in R(y,U)^*$.  Let $\alpha\in L$ be such that $\alpha\sim \gamma\cdot \beta^*$, so $\alpha\in R(y,U)$.  We then have that $\alpha\cdot y\in U$, whence $\tT_\alpha(y)=y$.

(2) implies (3):  Take $\beta\in L$ such that $T_\beta(y)=y$.  Take idempotent $\alpha \in L$ such that $\alpha\cdot \beta^*\sim\beta$.  (See the discussion before the statement of the theorem.)  We then have
$$\tT_\alpha(y)=\tT_\alpha(\tT_\beta(y))=\tT_{\alpha\cdot \beta^*}(y)=\tT_\beta(y)=y.$$

(3) implies (4) is trivial.

(4) implies (1):  Suppose that $\tT_\alpha(x)=y$.  Note then that
$$\tT_\alpha(y)=\tT_{\alpha}(\tT_\alpha(x))=\tT_{\alpha\cdot \alpha^*}(x)=\tT_\alpha(x)=y.$$  Fix a neighborhood $U$ of $y$ and take a neighborhood $V$ of $y$ so that $\overline{V}\subseteq U$.  Set $A:=R(y,V)$. Since $\alpha\in A^*$, we have that $A_\alpha$ is syndetic, whence it suffices to show that $A_\alpha\subseteq R(y,U)$.  But if $s\in A_\alpha$, then $s\alpha\in A^*$, so $(s\alpha)\cdot y\in V^*$, so 
$$T_s(y)=T_s(\tT_{\alpha}(y)=\tT_{s\alpha}(y)\in \overline{V}\subseteq U,$$ as desired.
\end{proof}

\begin{cor}
For $x,y\in X$, the following are equivalent:
\begin{enumerate}
\item $x$ and $y$ are proximal and $y$ is uniformly recurrent.
\item There is a minimal idempotent $\alpha$ such that $\tT_\alpha(x)=y$.
\end{enumerate}
\end{cor}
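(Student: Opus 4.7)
The proof will pass through the previous theorem on uniformly recurrent points by extracting a minimal idempotent that simultaneously witnesses both proximality and recurrence. The key observation is that proximality provides a nonempty left u-ideal ($I(x,y)$) and recurrence is captured by minimal left u-ideals, so by intersecting appropriately we can find a single minimal idempotent doing double duty.

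For (1) $\Rightarrow$ (2), I would start from the fact that proximality of $x,y$ gives $I(x,y)\neq\emptyset$, and by the earlier lemma $I(x,y)$ is a left u-ideal of $S^*$. Since every left u-ideal of $S^*$ contains a minimal left u-ideal (this follows from the corresponding fact for $\beta S$ via $\pi^{-1}$), we may choose a minimal left u-ideal $L\subseteq I(x,y)$. Now apply the previous theorem to $L$: because $y$ is uniformly recurrent, condition (3) of that theorem yields an idempotent $\alpha\in L$ with $\tT_\alpha(y)=y$. Since $\alpha\in L\subseteq I(x,y)$, we have $\tT_\alpha(x)=\tT_\alpha(y)=y$, and $\alpha$ is a minimal idempotent as required.

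For (2) $\Rightarrow$ (1), assume $\alpha$ is a minimal idempotent with $\tT_\alpha(x)=y$. A short calculation using idempotency and the homomorphism-like identity $\tT_{\alpha\cdot\alpha^*}=\tT_\alpha\circ\tT_\alpha$ shows
\[
\tT_\alpha(y)=\tT_\alpha(\tT_\alpha(x))=\tT_{\alpha\cdot\alpha^*}(x)=\tT_\alpha(x)=y,
\]
so $\tT_\alpha(x)=y=\tT_\alpha(y)$ witnesses $\alpha\in I(x,y)$ and hence $x,y$ are proximal. Moreover, since $\alpha$ is minimal, it lies in some minimal left u-ideal $L$, and the equality $\tT_\alpha(y)=y$ together with condition (3) $\Rightarrow$ (1) of the previous theorem yields that $y$ is uniformly recurrent.

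The main (very mild) obstacle is the first direction: one must be careful that the \emph{same} $\alpha$ simultaneously produces the equality $\tT_\alpha(x)=\tT_\alpha(y)$ and the fixed-point equality $\tT_\alpha(y)=y$. The trick is to first extract a minimal left u-ideal \emph{inside} $I(x,y)$, and only then invoke uniform recurrence to find the idempotent, so that every element of that ideal automatically satisfies both demands. The reverse direction is essentially a direct computation together with a citation of the previous theorem.
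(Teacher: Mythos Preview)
Your proof is correct and follows essentially the same approach as the paper: for (1)$\Rightarrow$(2) you choose a minimal left u-ideal $L\subseteq I(x,y)$ and invoke the previous theorem to obtain an idempotent $\alpha\in L$ with $\tT_\alpha(y)=y$, whence $\tT_\alpha(x)=\tT_\alpha(y)=y$; for (2)$\Rightarrow$(1) the paper simply writes ``Obvious from above,'' and your elaboration (the idempotency computation plus an appeal to the previous theorem) is exactly what that phrase intends. A minor remark: for uniform recurrence in the reverse direction you could cite (4)$\Rightarrow$(1) of the previous theorem directly from $\tT_\alpha(x)=y$, sparing the computation of $\tT_\alpha(y)=y$---though you still need that computation for proximality, so nothing is lost.
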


\begin{proof}
(1) implies (2):  Let $L$ be a minimal left u-ideal contained in $I(x,y)$.  By above, there is idempotent $\alpha\in L$ such that $\tT_\alpha(y)=y$, so $\tT_\alpha(x)=\tT_\alpha(y)=y$.

(2) implies (1):  Obvious from above.
\end{proof}

We are now ready to prove the main result of this section:

\begin{thm}
$A\subseteq S$ is central if and only if it is dynamically central.
\end{thm}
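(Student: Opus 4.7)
The plan is to reduce both implications to the preceding corollary, which characterizes the joint condition ``$x,y$ proximal and $y$ uniformly recurrent'' as the existence of a minimal idempotent $\alpha \in S^*$ with $\tT_\alpha(x)=y$.

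For dynamically central $\Rightarrow$ central, I would take $x,y$ and a neighborhood $U$ of $y$ witnessing that $A=R(x,U)$. The corollary produces a minimal idempotent $\alpha\in S^*$ with $\tT_\alpha(x)=y\in U$. Since $U$ is open and $T_\alpha(x)\approx \tT_\alpha(x)=y\in U$, we get $T_\alpha(x)\in U^*$, i.e.\ $\alpha\in R(x,U)^*=A^*$, exhibiting $\alpha$ as a minimal idempotent in $A^*$ and witnessing centrality of $A$.

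For the converse, I would construct a Bernoulli dynamical system via Lemma \ref{Bernoulli}. Take $Q$ to be $S$ if $S$ is a monoid, and otherwise the unitization $S\cup\{e\}$ with a freshly adjoined identity $e$; set $X:=2^Q$ with the shift $T_s(f)(q):=f(qs)$; take $x:=\mathbf{1}_A\in X$; and let $\alpha\in A^*$ be a minimal idempotent witnessing centrality of $A$. Setting $y:=\tT_\alpha(x)$ and $U:=\{f\in X:f(e)=1\}$, the computation $y(e)=\st(T_\alpha(x)(e))=\st(\mathbf{1}_{A^*}(e\alpha))=\st(\mathbf{1}_{A^*}(\alpha))=1$ places $y$ in the clopen neighborhood $U$, while unwinding definitions gives $R(x,U)=\{s\in S:x(es)=1\}=\{s\in S:s\in A\}=A$ on the nose. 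The corollary then supplies proximality of $x,y$ and uniform recurrence of $y$, so $A$ is dynamically central.

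The only real subtlety is minor bookkeeping: one wants the identity $e\cdot s=s$ in $Q$ so that $R(x,U)=A$ exactly (not up to a prefix shift by some fixed element of $S$), which is why I work in the unitized semigroup if $S$ itself lacks an identity. Otherwise the argument is purely a matter of assembling previously established pieces --- the dynamics-to-algebra dictionary embodied by $\tT$, the nonstandard characterizations of proximality and uniform recurrence, and the Bernoulli system --- and I anticipate no further obstacles.
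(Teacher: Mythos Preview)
Your proposal is correct and follows essentially the same route as the paper: both directions are reduced to the preceding corollary, and for the forward direction you build the Bernoulli shift on $2^{Q}$ with $Q$ the unitized semigroup, take $x=\mathbf 1_A$, $y=\tT_\alpha(x)$, and the cylinder neighborhood at the identity coordinate. The only cosmetic differences are that the paper always adjoins a fresh identity (you allow $Q=S$ when $S$ is already a monoid) and that the paper defines $U=\{z:z(e)=y(e)\}$ and then argues $y(e)=1$ via transfer, whereas you set $U=\{f:f(e)=1\}$ and compute $y(e)=1$ directly; neither difference is substantive.
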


\begin{proof}
First suppose that $A$ is central.  Let $Q:=S\cup \{e\}$ where $e$ is a new element that acts as a two-sided identity for $Q$.  Let $X=2^Q$ and $(T_s)_{s\in S}$ be as in Lemma \ref{Bernoulli}.  We show that $A$ is a dynamically central subset of $S$ as witnessed by this dynamical system.  Let $x\in Q$ be the characteristic function of $A$.  Let $\alpha\in S^*$ be a minimal idempotent such that $\alpha\in A^*$.  Set $y:=\tT_\alpha(x)$.  Then we know that $x$ and $y$ are proximal and $y$ is uniformly recurrent.  Set $U:=\{z\in X \ : \ z(e)=y(e)\}$, a neighborhood of $y$ in $X$.  It suffices to show that $A=R(x,U)$.  First note that $y=\st(\alpha\cdot x)$ implies that there is $s\in A$ such that $s\cdot x\in U$, whence it follows that $y(e)=(s\cdot x)(e)=x(es)=x(s)=1$.  It follows that, for $s\in S$, we have $s\in A \Leftrightarrow x(s)=1\Leftrightarrow T_s(x)(e)=1\Leftrightarrow T_s(x)\in U$, as desired.  

Now suppose that $A$ is dynamically central, so there are $x,y\in X$ that are proximal, $y$ is uniformly recurrent, and there is a neighborhood $U$ of $y$ such that $A=R(x,U)$.  Take a minimal idempotent $\alpha$ such that $\tT_\alpha(x)=y$.  Since $\alpha\cdot x\approx y$, we have that $\alpha\cdot x\in U^*$, and hence $\alpha\in A^*$.  It follows that $A$ is central.
\end{proof}

\begin{example}[Exercise 19.3.2 in \cite{HS}]
Suppose that $A,B\subseteq \omega$.  By considering the dynamical system $2^\omega$ as above, we see that:
\begin{enumerate}
\item $A$ is uniformly recurrent if, for every $k\in \bb N$, the set 
$$\{n\in \bb N \ : \ A+[n,n+k)=n+(A\cap [0,k))\}$$ is syndetic.
\item $A$ and $B$ are proximal if there are arbitrarily long intervals $I$ such that $A\cap I=B\cap I$.
\item $A$ is central if it is proximal to a uniformly recurrent set containing $0$. 
\end{enumerate}

Of course, we are applying the adjectives to a set when it applies to its characteristic function.  It is worth noting the nonstandard translation of the above:

\begin{enumerate}
\item $A$ is uniformly recurrent if and only if:  for every infinite $I$, there is $x\in I$ such that $(x+\bb N)\cap A^*=A$.
\item $A$ and $B$ are proximal if and only if there is an infinite interval $I$ such that $A^*\cap I=B^*\cap I$.
\end{enumerate}
\end{example}
%

\end{document}